\crefname{subsection}{section}{sections}
\crefname{hypothesis}{Hypothesis}{Hypotheses}
\setlist[enumerate]{leftmargin=.5in}
\setlist[itemize]{leftmargin=.5in}
\def\argmin{ \mathop{{\rm argmin}}}
\newcommand{\co}{\mathrm{conv}\,}
\DeclareMathOperator{\conv}{conv}
\newcommand{\cl}{\mathrm{cl}\,}
\DeclareMathOperator{\dom}{dom}
\DeclareMathOperator{\epi}{epi}
\DeclareMathOperator{\gph}{gph}
\DeclareMathOperator{\ri}{ri}
\DeclareMathOperator{\para}{par}
\DeclareMathOperator{\rge}{rge}
\DeclareMathOperator{\sgn}{sgn}
\def\Limsup{\mathop{{\rm Lim}\,{\rm sup}}}
\newcommand{\p}{\partial}
\newcommand{\R}{\mathbb{R}}
\newcommand{\Rn}{\R^n}
\newcommand{\supp}{\mathrm{supp}}
\newcommand{\rbar}{\overline{\mathbb R}}
\newcommand{\rp}{\mathbb R\cup\{+\infty\}}
\newcommand{\bR}{\mathbb{R}}
\def\Rn{\bR^n}
\newcommand{\IFF}{\quad\Longleftrightarrow\quad}
\newcommand{\bB}{\mathbb{B}}
\newcommand{\bN}{\mathbb{N}}
\newcommand{\bE}{\mathbb{E }}
\newcommand{\ip}[2]{\left\langle #1,\, #2\right\rangle}
\newcommand{\half}{\frac{1}{2}}
\newcommand{\set}[2]{\left\{#1\,\left\vert\; #2\right.\right\}}
\newcommand{\eg}{\emph{e.g.,}}
\newcommand{\ie}{\emph{i.e.,}}
\newcommand{\etc}{\emph{etc.}}
\newcommand{\cf}{\emph{cf.}}
\newcommand{\nb}{\emph{n.b.,}}
\newcommand{\etal}{\emph{et~al.}}
\newcommand{\lam}{\lambda}
\newcommand{\cF}{\mathcal{F}}
\newcommand{\cL}{\mathcal{L}}
\newcommand{\cV}{\mathcal{V}}
\newcommand{\AND}{\ \mbox{ and }\ }
\newcommand{\st}{\ \mbox{s.t.}\ }
\DeclareMathOperator{\E}{\mathbb{E}}
\newcommand{\reals}{\mathbb{R}}
\newcommand{\sphn}[1][n]{\mathbb{S}^{{#1}-1}}
\newcommand{\nats}{\mathbb{N}}
\newcommand{\iid}{\overset{\text{iid}}{\sim}}
\DeclareMathOperator{\gmw}{w}
\DeclareMathOperator{\rad}{rad}
\newcommand{\noise}{\ensuremath{h}}
\newcommand{\noisescale}{\ensuremath{\eta}}
\title{LASSO reloaded: a variational analysis perspective with applications to compressed sensing%
  \thanks{%
    Submitted to the editors DATE.%
    \funding{%
      The first author was partially supported by a postdoc stipend from the
      {\em Centre de Recherche Math\'ematiques (CRM)} as well as the {\em
        Institut de valorisation des donn\'ees} (IVADO) and NSERC. The second
      author acknowledges the support of NSERC through grant RGPIN-2020-06766,
      the Faculty of Arts and Science of Concordia University and the CRM. The
      third author was partially supported by the NSERC discovery grant
      RGPIN-2017-04035.}}}
\author{Aaron Berk%
  \thanks{Dept.\ Math \& Stats,
    McGill University,
    Montr\'eal, QC, Canada
    (\email{aaron.berk@mcgill.ca})}
  \and %
  Simone Brugiapaglia%
  \thanks{Dept.\ Math \& Stats,
    Concordia University,
    Montr\'eal, QC, Canada
    (\email{simone.brugiapaglia@concordia.ca})}
  \and %
  Tim Hoheisel%
  \thanks{Dept.\ Math \& Stats,
    McGill University,
    Montr\'eal, QC, Canada
    (\email{tim.hoheisel@mcgill.ca})}}
\newcommand*{\addFileDependency}[1]{
  \typeout{(#1)}
  \@addtofilelist{#1}
  \IfFileExists{#1}{}{\typeout{No file #1.}}
}
\begin{document}

\maketitle

\begin{abstract}
  This paper provides a variational analysis of the unconstrained formulation of the LASSO problem, ubiquitous in statistical learning, signal processing, and inverse problems. In particular, we establish smoothness results for the optimal value as well as Lipschitz and smoothness properties of the optimal solution as functions of the right-hand side (or \emph{measurement vector}) and the regularization parameter. Moreover, we show how to apply the proposed variational analysis to study the sensitivity of the optimal solution to the tuning parameter in the context of compressed sensing with subgaussian measurements. Our theoretical findings are validated by numerical experiments.
\end{abstract}

\begin{keywords}
  Variational analysis, LASSO, compressed sensing, coderivative, graphical derivative, metric regularity
\end{keywords}

\begin{AMS}
  49J53, 
  62J07, 
  90C25, 
  94A12, 
  94A20 
\end{AMS}

\section{Introduction}\label{sec:Intro}

\noindent
One of the most important problems in the applied mathematical sciences is to
recover a signal $ x_0\in \Rn$ from noisy linear measurements
$b = A x_0 + h\in\mathbb{R}^{m}$, where $A\in \R^{m \times n}$ is a measurement
(or sensing) matrix and $h\in \R^m$ is a noise vector.  A fundamental
observation is that such a \emph{linear inverse problem} can be assumed (or cast
to) have \emph{sparse solutions}, which can be recovered with high probability
from $m \ll n$ (random) observations via computationally efficient signal
reconstruction strategies. This is well documented in the groundbreaking work by
Donoho~\cite{Don 06} and Cand\`es, Romberg, and Tao~\cite{CRT 06, CaT 05}, which
gave rise to the field of \emph{compressed sensing}. Since its introduction, the
compressed sensing paradigm led to major technological advances in a vast array
of signal processing applications, such as, most notably, compressive
imaging. For an introduction to the field, its applications, and historical
remarks, we refer the reader to \cite{AdH 21, ElK 12, FoR 13, LaW 21, Vid 19}.

In the noiseless setting (\ie{} when $h = 0$), the sparse recovery paradigm for
linear inverse problems manifests itself in the optimization framework
\begin{equation}
\label{eq:Sparse}
\min_{x\in \R^n} \|x\|_0\st A x=b,
\end{equation}
where $\|\cdot\|_0$ is counting the nonzero entries of a vector in
$\R^n$. Despite the absence of noise, problem~\cref{eq:Sparse} is provably
NP-hard in general~\cite{FoR 13, Nat 95}. Thus, many convex relaxations of this
optimization problem have been proposed, all of which, in essence, rely on the
fact that the $\ell_1$-norm is the \emph{convex envelope} of $\|\cdot\|_0$
restricted to an $\ell_\infty$-ball (see, \eg{}~\cite[\S D.4]{AdH 21}).

Here we focus on sparse recovery via $\ell_1$ minimization in the noisy setting (\ie{} when $h\neq 0$) based on the well-known \emph{(unconstrained) LASSO (Least Absolute Shrinkage and Selection Operator)} problem   
\begin{equation}\label{eq:LASSO} 
  \min_{x\in \R^n} \frac{1}{2}\|Ax-b\|^2+\lambda \|x\|_1,
\end{equation}
where $\lambda>0$ is a regularization (or tuning) parameter and where
$\|\cdot\|$ and $\|\cdot\|_1$ denote the $\ell_2$- and $\ell_1$-norm,
respectively. To the best of our knowledge, the LASSO problem was originally proposed by Santosa and Symes \cite{SaS 86} and then introduced (in a constrained
formulation) by Tibshirani in the context of statistical regression \cite{Tib 96}. Since then, the LASSO has become an
indispensable tool in statistical learning, especially when performing tasks
such as model selection (see \cite{HTW 15} and references therein). Moreover, it
plays a key role in Bayesian statistics, thanks to its ability to characterize
maximum \emph{a posteriori} estimators in linear regression when the parameters
have independent Laplace priors \cite{TrC 08}. We also note that problem
\cref{eq:LASSO} was introduced in signal processing by Chen, Donoho, and
Saunders in \cite{CDS 01} under the name of \emph{basis pursuit denoising}. Here
on, we refer to the unconstrained LASSO simply as the LASSO.

From the optimization perspective, the LASSO falls into the category of
(additive) composite problems, for which many numerical solution methods have
been devised and which have been tested on \cref{eq:LASSO}, including proximal
gradient methods (\eg{} FISTA~\cite{BeT 09}) and primal-dual methods, see Beck's
excellent textbook~\cite{Bec 17} for references, or proximal Newton-type methods
proposed by Lee \etal{}~\cite{LSS 14}, Khanh~\etal{}~\cite{KMP 21}, Kanzow and
Lechner~\cite{KaL 21} or Milzarek and Ulbrich~\cite{MiU 14}.

From the perspective of \emph{variational analysis}~\cite{BoS 00, DoR 14, Mor
  06, Mor 18, RoW 98}, given any optimization problem with parameters, the
question as to the behavior of the optimal value and the optimal solution(s) as
functions of the parameters arises naturally. The trifecta for solutions of any
optimization problem is: \emph{existence, uniqueness} and \emph{stability}. For
the LASSO problem, existence is easily established as the $\ell_1$-norm is
\emph{coercive} (and the quadratic term is not
\emph{counter-coercive}\footnote{In the sense of \cite[Definition 3.25]{RoW
    98}.}).  Sufficient conditions for uniqueness were established by Tibshirani
\cite{Tib 13} and Fuchs \cite[Theorem 1]{Fuc 04}.  A set of conditions (see
\Cref{ass:Weak} below) that characterizes uniqueness of solutions of a whole
class of $\ell_1$-optimization problems (including the LASSO) were established
by Zhang \etal{}~\cite{ZYC 15}.  An alternative, shorter proof (even though it
is not explicitly stated for the LASSO) of this characterization was given by
Gilbert \cite{Gil 17} which relies, in essence, on polyhedral convexity.
Stability results for the LASSO are somewhat scattered throughout the
literature.  Previous work has examined sensitivity of various formulations of
$\ell_1$ optimization techniques with respect to the choice of the tuning
parameter; and other work has examined their robustness to, \eg{} measurement
error~\cite{AAB 19,FTZ 20}. Regarding the selection of the tuning parameter, the
choice of the optimal parameter has been well-studied. In the case of LASSO, the
optimal choice of tuning parameter was analyzed in~\cite{Bickel 09}
and~\cite{Shen 15} in such a way as to yield a notion of stability for all
sufficiently large $\lambda$. Other work has characterized the recovery error
for LASSO in terms of the tuning parameter, but does not discuss notions of
sensitivity~\cite{Bay 11,Oymak 2013,Thramp 2015,Thramp 2018}. An asymptotic
result establishing sensitivity of the error when $\lambda$ is less than the
optimal choice has been exhibited in a closely related simplification~\cite{Berk
  2020}. Notions of sensitivity of the recovery error with respect to variation
of the tuning parameter have been discussed in previous work for other
formulations of the LASSO program~\cite{Berk 2021}.  The work by Vaiter
\etal{}~\cite{VPF 15, VDF 17} (based on partial smoothness~\cite{Lew 02})
contains the only more systematic account (and for more general regularizers)
but is confined to the stability in the right-hand side. The
variational-analytic perspective that we take, based on set-valued implicit
function theorems based on graphical and coderivatives is new and yields an
array of results derived in a uniform fashion.

\subsection*{Main contributions} 
The main contributions of this paper are the following:  

\begin{itemize}
\item We establish (in \Cref{prop:RLS}) the smoothness of optimal value functions for general regularized least-squares problems, which encompass the LASSO problem \cref{eq:LASSO} as a special case.

\item  We demonstrate  (in \Cref{ex:Counter}) that the conditions established by Zhang \etal{}~\cite[Condition 2.1]{ZYC 15}, which characterize uniqueness   of solutions for the LASSO problem \cref{eq:LASSO},  do not generally  suffice to obtain a locally  single-valued, let alone locally Lipschitz, solution function. 
\item Under an assumption  which has been previously used  to establish uniqueness  \cite{Tib 13},
we prove (in \Cref{th:LASSOSol}) local Lipschitz continuity of the solution map 
\[
  S:(b,\lambda)\in\R^n\times \R_{++}\mapsto\argmin_{x\in \R^n}\left\{\frac{1}{2}\|Ax-b\|^2+\lambda \|x\|_1\right\}
\]
with an explicit Lipschitz bound. Under a slightly stricter assumption which has also occurred in the literature as sufficient for uniqueness \cite[Theorem 1]{Fuc 04}, we prove that the solution map is continuously differentiable at the point $(\bar b,\bar \lambda)$ of questions, and establish an improved Lipschitz bound $L$ at $(\bar b,\bar \lambda)$. This bound for $S$, when considered  only as a function in $\lambda$,  reads (\Cref{cor:Lipschitz_lambda})
\[
  L\leq \frac{\sqrt{|I|}}{\sigma_{\min}(A_I)^2}
\]
where $I$ is the support of $\bar x=S(\bar b,\bar \lambda)$. 
\item As an intermediate step of our analysis we prove
  (see~\cref{prop:lasso-is-metrically-regular}) \emph{(strong) metric
    regularity} of the subdifferential operator of the objective function
  $\varphi:=\frac{1}{2}\|A(\cdot)-b\|^2+\lambda\|\cdot\|_1$. The metric
  regularity of $\varphi$ is of independent interest, as it is used in,
  \eg{}~\cite{KMP 21} to establish convergence of a numerical method for solving
  the LASSO problem. In~\cref{exa:Fuchs}, we provide further insights on the
  assumptions used to prove these results and on the sharpness of the resulting
  Lipschitz constant bound under perturbations of $\lambda$
  (see~\cref{cor:Lipschitz_lambda}).

\item Finally, motivated by compressed sensing applications, we show how to apply these results to study the sensitivity of  LASSO solutions to the tuning parameter $\lambda$ when $A$ is a subgaussian random matrix and $m \ll n$. This is first addressed in~\cref{prop:applic_variational_lambda}, under an additional assumption on the sparsity of the LASSO solution. In~\cref{prop:applic_variational_lambda_no_sparsity} we show how to remove this assumption when $b = A x_0 + h$ for some $s$-sparse vector $x_0\in\mathbb{R}^n$ and under a bounded noise model for $h$. We also validate our theoretical findings with numerical experiments in~\Cref{sec:numerical}.
\end{itemize}

\subsection*{Roadmap}

The rest paper is organized as follows: In \Cref{sec:Prelim}, we provide the
background from variational and convex analysis necessary for our
study. \Cref{sec:Value} is devoted to the convex analysis of optimal values for
regularized least-squares problems as a function of the regularization parameter
and the right-hand side (or measurement vector). In turn, in
\Cref{sec:Solution}, we study the optimal solution(s) of the LASSO problem as a
function of the regularization parameter and the right-hand side through the
lens of variational analysis.  \Cref{sec:Appli} brings the findings from the
previous section to bear on compressed sensing with subgaussian random
measurements.  We close with some final remarks in \Cref{sec:Final}.

\subsection*{Notation}
\label{sec:notation}

We write $\R_+$ for the nonnegative real numbers, $\R_{++}$ for the positive
real numbers, and $\rbar:=\R\cup\{\pm \infty\}$ for the extended real line. For
a scalar $t\in \R$, its \emph{sign} is denoted by $\sgn(t)$. For a vector
$x\in \R^n,$ the operation is to be applied component-wise, i.e.\
$\sgn(x)=(\sgn(x_i))_{i=1}^n$.  The \emph{support} of the vector
$x\in \R^n$ is
$\supp(x)=\set{i\in\{1,\dots,n\}}{x_i\neq 0}$. The set of all
linear maps from the Euclidean space $\bE$ into another $\bE'$ will be denoted
by $\cL(\bE,\bE')$.
For a matrix $A\in \R^{m\times n}$ and $I\subseteq\{1,\dots,n\}$, we denote by $A_I\in \R^{m\times |I|}$ the matrix composed of the columns of $A$ corresponding to $I$.  On the other hand, for $y\in \R^n$ we write $y_I$ for the vector in $\R^{|I|}$ whose entries correspond to the indices $i\in I$, so that the product $A_Iy_I$ can be formed.
Moreover, we denote the $i$th column of $A$ by $A_i$. 
For $F:\bE_1\to\bE_2$ differentiable at $\bar x\in \bE_1$, we write $DF(\bar x)$ for its derivative. If $G:\bE_1\times \bE_2\to \bE_3$, $\bar y\in \bE_2$  and $F:=G(\cdot,\bar y)$ is differentiable at $\bar x$, we write $D_xG(\bar x, \bar y) := DF(\bar x)$. This is extended analogously to product spaces with more than two factors.

\section{Preliminaries}\label{sec:Prelim}

\noindent
In what follows, let   $(\bE,\ip{\cdot}{\cdot})$ be a Euclidean, i.e.\ a finite-dimensional real inner product space.  For our purposes, $\bE$ will  be a product space of the form $\bE=\R^n\times \R^m\times \R^p$, whose inner product is the sum of the standard Euclidean inner products of the respective factors.  We equip  $\bE$ with the Euclidean norm derived from the inner product through $\|x\|:=\sqrt{\ip{x}{x}}$ for all $x\in\bE$. The induced operator norm of $ L\in \cL(\bE,\bE')$ is also denoted by $\|\cdot\|$ and given by
$\|L\|:=\max_{\|x\|\leq 1}\|L(x)\|=\max_{\|x\|=1}\|L(x)\|$.
Moreover, we denote the unit $\ell_p$-ball of $\mathbb{R}^n$ as $\mathbb{B}_p$, where the dimension $n$ of the ambient space will be clear from the context. We denote the minimum\footnote{We point out that for $B\neq 0$, we refer to the smallest \emph{positive} singular value as the minimum singular value.}  and maximum singular values of a matrix $B\in \R^{m\times p}$ by $\sigma_{\min}(B)$ and $\sigma_{\max}(B)$, respectively.  Recall that the operator (or spectral) norm  of $B\in \R^{m\times p}$ is its largest singular value $\sigma_{\max}(B)$ \cite{HoJ 13}. In particular, the following result holds.

\begin{lemma}\label{lem:ON} Let $B\in \R^{m\times (n-r)}$ and let $[B\; 0]\in \R^{m\times n}$. Then $\|[B\; 0]\|=\|B\|.$
\end{lemma}
\begin{proof} This follows immediately from the fact that the singular values of $B$ and $[B\; 0]$ are the square roots of the eigenvalues of $BB^T$.
\end{proof}

\subsection{Tools from  variational analysis}
\label{sec:tools-variational-analysis}
We provide in this section the necessary tools from variational analysis, and we
follow here the notational conventions of Rockafellar and Wets~\cite{RoW 98},
but the reader can find the objects defined here also in the books by
Mordukhovich~\cite{Mor 06, Mor 18} or Dontchev and Rockafellar~\cite{DoR 14}.

Let $S:\bE_1\rightrightarrows\bE_2$ be a set-valued map. The domain and graph of $S$, respectively, are the sets $\dom S:=\set{x \in \bE_1}{S(x)\neq \emptyset}$ and $\gph S:=\set{(x,y)\in \bE_1\times \bE_2}{y\in S(x)}$. The \emph{outer limit} of $S$ at $\bar x \in \bE_1$  is
$\Limsup_{x\to \bar x} S(x):=\set{y\in\bE_2}{\exists \{x_k\}\to \bar x,  \{y_k\in S(x_k)\}\to y}$.

Now let $A\subseteq  \bE$.  The \emph{tangent cone} of $A$ at $\bar x\in A$ is $ T_A(\bar x):= \Limsup_{t\downarrow 0} t^{-1}\left(A-\bar x\right)$.
The \emph{regular normal cone} of $A$ at $\bar x\in A$ is the polar of the tangent cone, \ie{}
\[
  \hat N_A(\bar x):=T_A(\bar x)^\circ=\set{v \in \bE}{\ip{v}{y}\leq 0,\;\;\forall y\in T_{A}(\bar x)}.
\]
The \emph{limiting normal cone}   of $A$ at $\bar x\in A$ is 
$N_A(\bar x):=\Limsup_{x\to \bar x} \hat N_A(x)$.
The \emph{coderivative} of $S$ at $(\bar x,\bar y)\in \gph S$ is the map $D^*S(\bar x\mid\bar y):\bE_2\rightrightarrows\bE_1$ defined via
\begin{equation}
  \label{eq:def_coderivative}
  v\in D^*S(\bar x\mid\bar y)(y) \IFF (v,-y)\in N_{\gph S}(\bar x,\bar y).
\end{equation}
The    \emph{graphical derivative} of $S$ at $(\bar x,\bar y)$ is the map $DS(\bar x\mid\bar y):\bE_1\rightrightarrows \bE_2$ given by
\begin{equation}
  \label{eq:def_graphical_derivative}
  v\in DS(\bar x \mid \bar y)( u) \IFF (u,v) \in T_{\gph S}(\bar x,\bar y),
\end{equation} 
or, equivalently (\cite[Eq.~8(14)]{RoW 98}), 
\begin{equation}\label{eq:GD}
  DS(\bar x \mid \bar y)( u)=\Limsup_{t\downarrow 0, \, u'\to u} \frac{S(\bar x+tu')-\bar y}{t}.
\end{equation}
The \emph{strict graphical derivative} of $S$ at $(\bar x,\bar y)$ is  $D_*S(\bar x\mid\bar y):\bE_1\rightrightarrows \bE_2$ given by 
\begin{eqnarray*}
  D_*S(\bar x\mid\bar y)(w)=
  \set{z \in \bE_1}{\exists \left\{\begin{array}{l}
    \vspace{0.1cm}\{t_k\}\downarrow 0,\, \{w_k\}\to w,\\ 
    \{z_k\}\to z,\\
    \{(x_k,y_k)\in\gph S\}\to (\bar x,\bar y) \end{array}\right\}: z_k\in\frac{S(x_k+t_kw_k)-y_k}{t_k}}.
\end{eqnarray*}
We adopt the convention to set $D^*S(\bar x):=D^*S(\bar x\mid\bar y)$ if $S(\bar x)$ is a singleton, and  proceed  analogously for the graphical derivatives. 

We point out that if $S$ is single-valued and continuously differentiable at $\bar x$, then $DS(\bar x)=D_*S(\bar x)$ coincides with its derivative at $\bar x$. Moreover, in this case $D^*S(\bar x)=DS(\bar x)^*$. Therefore, there is, in this case, no ambiguity in notation.

More generally, we will employ the following sum rule for the derivatives introduced above frequently in our study in \Cref{sec:Solution}. 

\begin{lemma}[{\cite[Exercise 10.43 (b)]{RoW 98}}]\label{lem:Sum} Let $S=f+F$ for $f:\bE_1\to\bE_2$ and   $F:\bE_1\rightrightarrows \bE_2$. Let $(\bar x,\bar u)\in \gph S$ and assume that $f$ is continuously differentiable at $\bar x$. Then: 
  \begin{itemize}
  \item [(a)] $DS(\bar x|\bar u)(w)=Df(\bar x)w+DF(\bar x|\bar u-f(\bar x))(w),\quad \forall w\in \bE_1$;
  \item[(b)]  $D_*S(\bar x|\bar u)(w)=Df(\bar x)w+D_*F(\bar x|\bar u-f(\bar x))(w),\quad  \forall w\in \bE_1$;
  \item[(c)]  $D^*S(\bar x|\bar u)(y)=Df(\bar x)^*y+D^*F(\bar x|\bar u-f(\bar x))(y),\quad \forall y\in \bE_2$.
  \end{itemize}
\end{lemma}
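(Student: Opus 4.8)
The plan is to realize $\gph S$ as the image of $\gph F$ under a smooth change of coordinates and to transport each of the three derivative constructions through it using the standard diffeomorphism calculus for tangent and normal cones (\cite[Ex.~6.7, Thm.~6.31]{RoW 98}). Write $\bar v:=\bar u-f(\bar x)$, so $(\bar x,\bar v)\in\gph F$, and note that for every $(x,y)$ one has $y\in f(x)+F(x)$ iff $y-f(x)\in F(x)$; hence $\gph S=\Phi(\gph F)$, where $\Phi(x,y):=(x,\,y+f(x))$ and $\Phi(\bar x,\bar v)=(\bar x,\bar u)$. Since $f$ is continuously differentiable near $\bar x$, $\Phi$ is a $C^1$ diffeomorphism of a neighborhood of $(\bar x,\bar v)$ onto a neighborhood of $(\bar x,\bar u)$, with inverse $(x,y)\mapsto(x,y-f(x))$, derivative $D\Phi(\bar x,\bar v)(u,w)=(u,\,w+Df(\bar x)u)$ and adjoint $D\Phi(\bar x,\bar v)^*(p,q)=(p+Df(\bar x)^*q,\,q)$; in particular $D\Phi(\bar x,\bar v)^{-1}(u,w)=(u,\,w-Df(\bar x)u)$ and $(D\Phi(\bar x,\bar v)^*)^{-1}(p,q)=(p-Df(\bar x)^*q,\,q)$.

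For part (a), the coordinate-change rule for tangent cones gives $T_{\gph S}(\bar x,\bar u)=D\Phi(\bar x,\bar v)\,T_{\gph F}(\bar x,\bar v)$, so $(u,v)\in T_{\gph S}(\bar x,\bar u)$ iff $(u,\,v-Df(\bar x)u)\in T_{\gph F}(\bar x,\bar v)$; by \cref{eq:def_graphical_derivative} this is precisely $v\in Df(\bar x)u+DF(\bar x\mid\bar v)(u)$. For part (c), since the limiting normal cone is the outer limit of regular normal cones and $D\Phi$ is continuous at $(\bar x,\bar v)$, the change-of-variables factor $(D\Phi)^{-*}$ passes through that outer limit, yielding $N_{\gph S}(\bar x,\bar u)=(D\Phi(\bar x,\bar v)^*)^{-1}N_{\gph F}(\bar x,\bar v)$; hence $(v,-y)\in N_{\gph S}(\bar x,\bar u)$ iff $(v-Df(\bar x)^*y,\,-y)\in N_{\gph F}(\bar x,\bar v)$, which by \cref{eq:def_coderivative} is part (c).

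Part (b) must be handled directly from the sequential definition. Take $z\in D_*S(\bar x\mid\bar u)(w)$, witnessed by $t_k\downarrow 0$, $w_k\to w$, $z_k\to z$, $(x_k,y_k)\in\gph S\to(\bar x,\bar u)$ with $y_k+t_kz_k\in S(x_k+t_kw_k)$, i.e.\ $y_k+t_kz_k-f(x_k+t_kw_k)\in F(x_k+t_kw_k)$. Put $\tilde y_k:=y_k-f(x_k)$ (so $(x_k,\tilde y_k)\in\gph F$ and $\tilde y_k\to\bar v$) and $\tilde z_k:=z_k-\frac{1}{t_k}(f(x_k+t_kw_k)-f(x_k))$. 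Writing the increment of $f$ as $t_k\int_0^1 Df(x_k+st_kw_k)\,ds\,w_k$ and using that $Df$ is continuous at $\bar x$ while $x_k+st_kw_k\to\bar x$ uniformly in $s\in[0,1]$, we get $\frac{1}{t_k}(f(x_k+t_kw_k)-f(x_k))\to Df(\bar x)w$, hence $\tilde z_k\to z-Df(\bar x)w$, and $\tilde z_k\in\frac{1}{t_k}(F(x_k+t_kw_k)-\tilde y_k)$. Thus $z-Df(\bar x)w\in D_*F(\bar x\mid\bar v)(w)$, i.e.\ $z\in Df(\bar x)w+D_*F(\bar x\mid\bar v)(w)$; the reverse inclusion follows by running the same substitution backwards.

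The main obstacle is the careful invocation of the diffeomorphism change-of-variables formulas — in particular, checking that ``$f$ continuously differentiable at $\bar x$'' is exactly the regularity that makes $\Phi$ a local $C^1$ diffeomorphism and that lets the $(D\Phi)^{-*}$ factor commute with the outer limit defining $N_{\gph S}$ in part (c) — together with the elementary but essential limit computation for the nonlinear difference quotients of $f$ in part (b), which is where continuity (rather than mere existence) of $Df$ at $\bar x$ is used.
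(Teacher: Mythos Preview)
The paper does not give its own proof of this lemma; it simply cites \cite[Exercise~10.43(b)]{RoW 98}. Your argument is correct and is essentially the intended solution: realize $\gph S=\Phi(\gph F)$ via the local $C^1$ diffeomorphism $\Phi(x,y)=(x,y+f(x))$, then push the tangent cone through $D\Phi$ for (a), the limiting normal cone through $(D\Phi)^{-*}$ for (c), and handle the strict graphical derivative (b) directly from its sequential definition, where the continuity of $Df$ at $\bar x$ is exactly what makes the difference quotient $t_k^{-1}(f(x_k+t_kw_k)-f(x_k))$ converge to $Df(\bar x)w$ even though the base point $x_k$ varies.
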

\noindent
\subsection{Tools from convex analysis} 
For well-known terms and objects in convex analysis (proper, closed, lower semicontinuous (lsc), epigraph, \etc{}) we refer to~\cite{bauschke2011convex, Roc 70}. We set
$\Gamma_0(\bE):=\set{f:\bE\to\rp}{f\;\text{closed, proper, convex}}$.
The \emph{(Fenchel) conjugate} $f^*$ of $f$  is given by $f^*(y) := \sup_{x\in\bE}\{\ip{y}{x}-f(x)\}$.
As it will occur frequently in our study, we point out that 
$\|\cdot\|_1^*=\delta_{\bB_\infty}$, i.e.\ the conjugate of the $\ell_1$-norm is the indicator function of the $\ell_\infty$-ball. Here,  given a set $S \subseteq \bE$, the \emph{indicator function} of $S$ is denoted $\delta_S(x)$ and it is equal to $0$ if $x\in S$ and $+\infty$ otherwise. The  \emph{(convex) subdifferential} of $f$ at $\bar x\in \bE$ is $\p f(\bar x):=\set{v\in \bE}{f(\bar x)+\ip{v}{x-\bar x}\leq f(x) \;\forall x\in \bE}$, which is always closed and convex, possibly empty (even for $\bar x\in \dom f$). 
An alternative description is 
$\p f(\bar x)=\set{y\in \bE}{(y,-1)\in N_{\epi f}(\bar x,f(\bar x))}$.
In particular, the \emph{relative interior} \cite[Chapter 6]{Roc 70} of the subdifferential is characterized by (\eg{} see~\cite[Theorem 6.8]{Roc 70})
\begin{equation}\label{eq:RIN}
  y\in \ri(\p f(\bar x)) \IFF (y,-1)\in \ri N_{\epi f}(\bar x, f(\bar x)).
\end{equation}
Note that the subdifferential of the indicator of a convex set $S\subseteq\bE$
is the normal cone to $S$, \ie{} $\p \delta_S=N_S$. The subdifferential operator
induces a set-valued map $\p f:\bE\rightrightarrows\bE$ which, for
$f\in \Gamma_0(\bE)$, has closed graph and nonempty domain contained in
the domain of $f$. An important example for our study is the $\ell_1$-norm
$\|\cdot\|_1:\R^n\to\R$. In this case, we have
\begin{equation}
  \label{eq:SubL1}
  \p\|\cdot\|_1(x)%
  =\prod_{i=1}^n
  \left.
    \begin{cases}
      \sgn(x_i),& x_i\neq 0
      \\
      [-1,1],& x_i=0
    \end{cases}
  \right\}
  =\set{y\in \bB_\infty}{\ip{x}{y} = \|x\|_1}, \quad \forall x\in \mathbb{R}^n.
\end{equation}
The central result that we will use to study optimal value functions of  parameterized convex optimization problems is the following. 

\begin{theorem}[Conjugate and subdifferential of optimal value function]\label{th:InfProj}
For a function $\psi\in \Gamma_0(\bE_1\times \bE_2)$, the optimal value function
\begin{equation}\label{eq:inf-proj}
  p:x\in\bE_1\mapsto\inf_{u \in \mathbb{E}_2}\, \psi(x,u)
\end{equation}
is convex and the following hold:
\begin{itemize}
\item[(a)] $p^*=\psi^*(\cdot,0)$,  which is  closed and  convex;
\item[(b)] for  $\bar x \in \mathbb{E}_1$ and $\bar u\in \argmin \psi(\bar x,\cdot)$, we have $\p p(\bar x)=\set{v \in \mathbb{E}_1}{(v,0)\in \p \psi(\bar x,\bar u)}$;
\item[(c)] $p^*\in \Gamma_0(\bE_1)$ if and only if
$\dom \psi^*(\cdot,0)\neq\emptyset$;
\item[(d)]  $p\in \Gamma_0(\bE_1)$ if $\dom \psi^*(\cdot,0)\neq\emptyset$, hence the infimum in \cref{eq:inf-proj} is attained when finite.
\end{itemize}
\end{theorem}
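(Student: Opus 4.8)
The plan is to derive all four items by elementary Fenchel conjugacy (plus a closedness argument for the last part of (d)), in the order (a), (b), (c), (d). I would open with (a) and the convexity of $p$, both of which amount to interchanging two suprema. For $v\in\bE_1$,
\[
p^*(v)=\sup_{x\in\bE_1}\Big(\ip{v}{x}-\inf_{u\in\bE_2}\psi(x,u)\Big)=\sup_{(x,u)}\big(\ip{v}{x}-\psi(x,u)\big)=\sup_{(x,u)}\big(\ip{(v,0)}{(x,u)}-\psi(x,u)\big)=\psi^*(v,0),
\]
which is (a); closedness and convexity of $p^*=\psi^*(\cdot,0)$ are inherited from those of the conjugate $\psi^*$, since its $0$-slice has epigraph $\epi\psi^*\cap(\bE_1\times\{0\}\times\R)$. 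Convexity of $p$ itself is the standard near-minimizer argument: for $x_0,x_1\in\dom p$, $\lambda\in[0,1]$, and $u_i$ with $\psi(x_i,u_i)\le p(x_i)+\eps$, convexity of $\psi$ at the midpoint gives $p(\lambda x_0+(1-\lambda)x_1)\le\lambda p(x_0)+(1-\lambda)p(x_1)+\eps$ for every $\eps>0$.

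For (b), fix $\bar x\in\bE_1$ and $\bar u\in\argmin\psi(\bar x,\cdot)$, so that $p(\bar x)=\psi(\bar x,\bar u)$ (both sides of the claimed identity being empty when this value is $+\infty$). The inclusion ``$\supseteq$'' is immediate: if $(v,0)\in\p\psi(\bar x,\bar u)$ then $\psi(x,u)\ge\psi(\bar x,\bar u)+\ip{v}{x-\bar x}$ for all $(x,u)$, and taking $\inf_u$ gives $p(x)\ge p(\bar x)+\ip{v}{x-\bar x}$, i.e.\ $v\in\p p(\bar x)$. For ``$\subseteq$'', assume $v\in\p p(\bar x)$ (so $p(\bar x)$ is finite) and invoke the Fenchel--Young equality $p(\bar x)+p^*(v)=\ip{v}{\bar x}$; substituting $p^*(v)=\psi^*(v,0)$ from (a) and $p(\bar x)=\psi(\bar x,\bar u)$ turns this into $\psi(\bar x,\bar u)+\psi^*(v,0)=\ip{(v,0)}{(\bar x,\bar u)}$, which is exactly the Fenchel--Young equality for $\psi$, i.e.\ $(v,0)\in\p\psi(\bar x,\bar u)$.

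For (c): by (a), $p^*=\psi^*(\cdot,0)$ is closed and convex, and since $\psi\in\Gamma_0$ forces $\psi^*\in\Gamma_0$, the function $p^*$ never equals $-\infty$; hence $p^*\in\Gamma_0(\bE_1)$ iff $p^*\not\equiv+\infty$, i.e.\ iff $\dom\psi^*(\cdot,0)\ne\emptyset$. For (d), suppose $\dom\psi^*(\cdot,0)\ne\emptyset$. Then (c) yields $p^*\in\Gamma_0(\bE_1)$, hence $p^{**}\in\Gamma_0(\bE_1)$ and $p^{**}\le p$; as $p$ is convex and proper, $p^{**}=\cl p$, so what remains is the equality $p=p^{**}$ (lower semicontinuity of $p$) together with attainment of the inner infimum wherever $p$ is finite. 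I would attempt both via a Weierstrass/recession argument on $u\mapsto\psi(\bar x,u)$ for $\bar x\in\dom p$, trying to leverage $\dom\psi^*(\cdot,0)\ne\emptyset$ to keep minimizing sequences in $u$ bounded; once $\argmin\psi(\bar x,\cdot)\ne\emptyset$ on $\dom p$, the gap between $p$ and $\cl p$ collapses.

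\emph{Main obstacle.} Items (a)--(c) are routine conjugacy bookkeeping; the crux is the last part of (d), namely promoting the automatic $p^{**}\in\Gamma_0(\bE_1)$ to $p\in\Gamma_0(\bE_1)$ --- equivalently, ruling out a loss of lower semicontinuity in the partial minimization and securing attainment of the inner infimum where $p$ is finite. This is the only place where the hypothesis $\dom\psi^*(\cdot,0)\ne\emptyset$ must be exploited beyond what (c) already extracts from it, and the recession-cone bookkeeping in the $u$-variable (and a careful accounting of precisely when attainment holds) is where I anticipate the real difficulty.
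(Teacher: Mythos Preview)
Your proofs of (a), (b) and (c) are correct, and your argument for (b) via the Fenchel--Young equality is precisely the one the paper gives. The paper itself does not prove (a), (c) or (d) but simply defers them to an external reference, so for (a) and (c) you have actually supplied more than the paper does.

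Your hesitation about (d) is well founded---in fact, the statement as recorded is too strong, so no amount of recession bookkeeping will close it without an additional hypothesis. The assumption $\dom\psi^*(\cdot,0)\ne\emptyset$ only guarantees that $p$ is proper (minorized by an affine function of $x$); it forces neither lower semicontinuity of $p$ nor attainment of the inner infimum. For the failure of lsc, take $\psi=\delta_C$ with $C=\{(x,u)\in\R^2:x>0,\ xu\ge 1\}$: then $\psi\in\Gamma_0(\R^2)$ and $\psi^*(v,0)=\sup_{x>0}vx=0$ for every $v\le 0$, so the hypothesis holds, yet $p=\delta_{(0,\infty)}$ is not lsc at $0$. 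For the failure of attainment, take $\psi(x,u)=x^2+e^u\in\Gamma_0(\R^2)$: here $\psi^*(v,0)=v^2/4$ has full domain and $p(x)=x^2\in\Gamma_0(\R)$, but the infimum over $u$ is never attained. Your recession-cone instinct is exactly the right tool---the missing ingredient is a condition such as $\psi^\infty(0,u)>0$ for all $u\ne 0$ (equivalently, level-boundedness of $\psi$ in $u$ locally uniformly in $x$; see, e.g., \cite[Theorem~1.17]{RoW 98}), which both counterexamples above violate. The cited reference presumably carries such a hypothesis or is applied only in settings where it holds automatically.
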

\begin{proof} All statements, except (b), can be found in, \eg{}~\cite[Theorem 3.101]{Hoh 19}. Part (b) follows from (a) and the fact that, given $(x,y) \in \mathbb{E}_1 \times \mathbb{E}_2$, a pair $(r,s) \in \mathbb{E}_1 \times \mathbb{E}_2$ satisfies  $\psi(x,y)+\psi^*(r,s)=\ip{(x,y)}{(r,s)}$ if and only if $(r,s)\in \p \psi(x,y)$ (see \cite[Theorem 23.5]{Roc 70}).
\end{proof}

\section{The value function for regularized least-squares problems}\label{sec:Value}

 \noindent
 For $A\in \R^{m\times n}, b\in \R^m$ and $\lambda>0$ consider the regularized least-squares problem
 \begin{equation}\label{eq:RegLS} 
   \min_{x\in \R^n} \frac{1}{2}\|Ax-b\|^2+\lambda R(x),
\end{equation}
where $R\in \Gamma_0(\R^n)$ is a regularizer. The value function for   \cref{eq:RegLS} is 
  \begin{equation}\label{eq:ValueRegLS}
    p:(b,\lambda)\in \R^n\times \R_{+} \mapsto\inf_{x\in \R^n} \left\{\frac{1}{2}\|Ax-b\|^2+\lambda R(x)\right\}, 
 \end{equation}
 where we set $0\cdot R:=\delta_{\cl(\dom R)}$\footnote{A convention which is
   backed up by the fact that $\lambda\cdot R$ \emph{epigraphically} converges
   to $\delta_{\cl(\dom R)}$ as $\lambda \downarrow 0$. See,
   \eg{}~\cite[Proposition~4(b)]{FGH 20}.}. The next result studies this value
 function in depth. Here, note that the \emph{linear image} $A\cdot f$ of a
 function $f\in \Gamma_0(\R^n)$ under $A\in \R^{m\times n}$ is the convex
 function given by
\[
  (A\cdot f)(w)= \inf_{x\in \R^n}\set{f(x)}{x=w}, \quad \forall w\in \mathbb{R}^m,
\]
which is paired in duality with $f^*\circ A^T$, see e.g.~\cite[Theorem 16.3]{Roc
  70}. Moreover, we employ the \emph{recession function}~\cite{Roc 70} (also
called \emph{horizon function}~\cite{RoW 98}) $f^\infty\in \Gamma_0(\R^n)$ of a
convex function $f\in \Gamma_0(\R^n)$ which, given \emph{any} $\bar x\in \dom f$,
is defined by
\[
  f^\infty(x):=\sup_{t>0}\frac{f(\bar x)-f(\bar x)}{t}, \quad \forall x\in \mathbb{R}^n.
\]

\begin{proposition}[Regularized least squares]\label{prop:RLS}   The following hold for the regularized least-squares problem \cref{eq:RegLS}:

\begin{itemize}
\item[(a)] (Existence of solutions) If $R^\infty(x)>0$ for all
  $x\in \ker A\setminus\{0\}$, then \cref{eq:RegLS} has a solution for all
  $(b,\lambda)\in \R^m\times \R_{++}$.
\item[(b)] (Differentiability of value function) Let
  $(\bar b,\bar \lambda)\in \R^m\times \R_{++}$ and let $\bar x$ be a
  corresponding solution of \cref{eq:RegLS}. Then the value function $p$ in
  \cref{eq:ValueRegLS} is differentiable at $(\bar b,\bar \lambda)$ with
  \[
    \nabla p(\bar b,\bar \lambda)%
    =
    \begin{pmatrix}
      \bar b-A\bar x\\  R(\bar x)
    \end{pmatrix}.
  \] 
  Moreover, if \cref{eq:RegLS} has a solution for all $(b,\lambda)$ in some
  neighborhood $U$ of $(\bar b,\bar \lambda)$, then $p$ is continuously
  differentiable on $U$.

\item[(c)] (Continuity of value function to the boundary) Assume that
  $\rge A^T \cap \dom R\neq \emptyset$. Then $p$ is continuous at $(\bar b,0)$
  for any $\bar b\in \R^m$ in the sense that
  \[
    p(b,\lambda)\to \inf_{x\in \cl(\dom R)} \frac{1}{2}\|Ax-\bar b\|^2\quad\text{as}\quad(b,\lambda)\to (\bar b,0).
  \]
\item[(d)] (Convexity of value function) $p$ is convex as a function of $b$,
  concave as a function of $\lambda$.

\item[(e)] (Constancy of residual and regularizer value) Given
  $(\bar b,\bar \lambda)\in \R^m\times \R_{++}$, the residual $\bar b-A\bar x$
  and regularizer value $R(\bar x)$ do not depend on the particular solution
  $\bar x$.
\end{itemize}
\end{proposition}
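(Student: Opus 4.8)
The plan is to run all five parts through the inf-projection calculus of \Cref{th:InfProj}, in the order (d), (a), (e), (b), (c). Throughout write $\varphi_{b,\lambda} := \frac12\|A(\cdot)-b\|^2 + \lambda R$ and, for fixed $\lambda > 0$, $\psi_\lambda(b,x) := \frac12\|Ax-b\|^2 + \lambda R(x)$, which lies in $\Gamma_0(\R^m\times\R^n)$ as the sum of a convex quadratic in $(b,x)$ and a convex function of $x$; thus $p(\cdot,\lambda) = \inf_x \psi_\lambda(\cdot,x)$. Then (d) is immediate: convexity of $p(\cdot,\lambda)$ in $b$ follows from \Cref{th:InfProj} applied to $\psi_\lambda$, and concavity of $p(b,\cdot)$ in $\lambda$ is just the observation that for each fixed $x$ the map $\lambda\mapsto\frac12\|Ax-b\|^2+\lambda R(x)$ is affine, so $p(b,\cdot)$ is a pointwise infimum of affine functions.

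For (a) I would compute the horizon function of the objective. Since $\frac12\|A(\cdot)-b\|^2$ is finite-valued with $(\frac12\|A(\cdot)-b\|^2)^\infty = \delta_{\ker A}$, the recession-sum rule gives $\varphi_{b,\lambda}^\infty = \delta_{\ker A} + \lambda R^\infty$, which under the hypothesis is strictly positive on $\R^n\setminus\{0\}$ (it equals $\lambda R^\infty(d) > 0$ on $\ker A\setminus\{0\}$ and $+\infty$ elsewhere). Hence $\varphi_{b,\lambda}\in\Gamma_0(\R^n)$ is level-bounded and attains its finite infimum for every $(b,\lambda)\in\R^m\times\R_{++}$. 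For (e), convexity alone suffices: the solution set $\Sigma$ of \cref{eq:RegLS} is convex and $\varphi_{b,\lambda}$ is constant on it, so the two convex functions $\frac12\|A(\cdot)-b\|^2$ and $\lambda R$ have affine (constant) sum on the convex set $\Sigma$, forcing each to be affine on $\Sigma$; expanding the quadratic along any segment of $\Sigma$ then forces $A$ to be constant on $\Sigma$, so the residual $b - Ax$ is constant there, and (as $\lambda > 0$) so is $R(x)$.

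Part (b) is the crux. In the $b$-direction, \Cref{th:InfProj}(b) with $\psi_{\bar\lambda}$ and the given solution $\bar x$ gives $\p_b p(\bar b,\bar\lambda) = \{v : (v,0)\in\p\psi_{\bar\lambda}(\bar b,\bar x)\}$; since $\p\psi_{\bar\lambda}(\bar b,\bar x) = \{(\bar b - A\bar x,\ A^T(A\bar x-\bar b)+\bar\lambda w) : w\in\p R(\bar x)\}$, the membership $(v,0)\in\p\psi_{\bar\lambda}(\bar b,\bar x)$ reduces to $v = \bar b - A\bar x$ plus the optimality condition $0\in A^T(A\bar x - \bar b) + \bar\lambda\p R(\bar x)$, which holds; hence $\p_b p(\bar b,\bar\lambda) = \{\bar b - A\bar x\}$ is a singleton and $p(\cdot,\bar\lambda)$ is differentiable at $\bar b$. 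In the $\lambda$-direction, \Cref{th:InfProj} is unavailable since $p(\bar b,\cdot)$ is concave, not convex; instead I would note that the affine function $\lambda\mapsto\frac12\|A\bar x - \bar b\|^2 + \lambda R(\bar x)$ majorizes $p(\bar b,\cdot)$ with equality at $\bar\lambda$, so $R(\bar x)$ is a supergradient there, and a Danskin-type computation identifies the superdifferential of $p(\bar b,\cdot)$ at $\bar\lambda$ with the closed convex hull of $\{R(x) : x\in\Sigma\}$, which by (e) is the single point $\{R(\bar x)\}$ — so $p(\bar b,\cdot)$ is differentiable at $\bar\lambda$ with derivative $R(\bar x)$. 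Finally, $p$ is a saddle function (convex in $b$, concave in $\lambda$), so differentiability in each argument separately at $(\bar b,\bar\lambda)$ yields joint differentiability there with gradient $(\bar b - A\bar x,\ R(\bar x))$, via a short estimate using outer semicontinuity of the partial sub/superdifferentials; the same semicontinuity, applied at every point where a solution exists, upgrades differentiability on $U$ to continuous differentiability, giving the ``moreover'' claim. I expect this $\lambda$-direction to be the main obstacle: the concavity of $p$ in $\lambda$ blocks the direct use of \Cref{th:InfProj}, so the superdifferential must be collapsed to one point by hand, and this is exactly where the constancy of the regularizer value over the solution set from (e) is indispensable; the ensuing passage from separate to joint (and then continuous) differentiability of the saddle function is the remaining, more technical, point.

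For (c) I would argue by epi-convergence: as $\lambda\downarrow 0$ one has $\lambda R\eto\delta_{\cl(\dom R)}$ (the cited fact), while $\frac12\|A(\cdot)-b\|^2\to\frac12\|A(\cdot)-\bar b\|^2$ uniformly on bounded sets as $b\to\bar b$, so $\varphi_{b,\lambda}\eto\frac12\|A(\cdot)-\bar b\|^2+\delta_{\cl(\dom R)}$ as $(b,\lambda)\to(\bar b,0)$; the quadratic term provides the uniform level-boundedness forcing the infima to converge, and the standing assumption $\rge A^T\cap\dom R\neq\emptyset$ secures properness of the limiting problem (equivalently of the linear image $A\cdot R$), yielding $p(b,\lambda)\to\inf_{x\in\cl(\dom R)}\frac12\|Ax-\bar b\|^2$.
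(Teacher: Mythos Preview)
Your approach is largely sound but differs substantially from the paper's in parts (b) and (c), and part (c) has a gap.

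For (b), the paper avoids the saddle-function complication entirely by the factorization $p(b,\lambda) = \lambda\, v(b,\lambda)$ where $v(b,\lambda) := \inf_x\bigl\{\tfrac{1}{2\lambda}\|Ax-b\|^2 + R(x)\bigr\}$ is \emph{jointly convex} in $(b,\lambda)$ (it fits the perspective-function framework of \cite{FGH 20}). One computes $\partial v(\bar b,\bar\lambda)$ directly, finds it is a singleton, obtains joint differentiability of $v$ from convexity, and finishes with the product rule. This bypasses the separate-to-joint differentiability passage for saddle functions that you yourself flag as ``the remaining, more technical, point''; that passage is a genuine theorem for closed saddle functions but is not the short estimate you suggest, and the paper's route is considerably cleaner. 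Incidentally, the paper derives (e) as a corollary of (b), whereas your direct argument for (e) from convexity of the solution set is valid and self-contained --- a minor but nice difference.

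For (c), the paper takes a quite different route: via Fenchel--Rockafellar duality and epigraphical multiplication it rewrites $p(b,\lambda) = \lambda\, e_\lambda(A\cdot R)(b)$, a Moreau envelope of the linear image $A\cdot R$ (this is where the hypothesis $\rge A^T\cap\dom R\neq\emptyset$ enters, to make $A\cdot R\in\Gamma_0$), and then invokes a known limit property of Moreau envelopes as the parameter tends to zero. Your epi-convergence argument delivers $\limsup p(b,\lambda)\leq p(\bar b,0)$ for free, but the claim that ``the quadratic term provides the uniform level-boundedness forcing the infima to converge'' is not justified: as $\lambda\downarrow 0$ the family $\varphi_{b,\lambda}$ need not be equi-level-bounded (for instance when $\ker A\neq\{0\}$ and $R$ is unbounded below on directions in $\ker A\cap\dom R$), so the $\liminf$ inequality does not follow from the general epi-convergence machinery without further work. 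The Moreau-envelope identity sidesteps this entirely.
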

\begin{proof}
  (a) Set $\phi:=\frac{1}{2}\|A(\cdot)-b\|^2+\lambda R\in \Gamma_0(\R^n)$. Now
  observe, \cf{}~\cite[p.~89]{RoW 98}, that
  $(\frac{1}{2}\|A(\cdot)-b\|^2)^\infty=\delta_{\ker A}-\ip{A^Tb}{\cdot}.$
  Hence, using the additivity of the horizon function operation  for convex functions with overlapping domain (see \cite[Exercise 3.29]{RoW 98}), $ \phi^\infty =\delta_{\ker A}+\ip{b}{A(\cdot)}+\lambda R^\infty, $ see
  \cite[Exercise 3.29]{RoW 98}. Consequently, using the given assumptions, we
  have $\phi^\infty(x)>0$ for all $x\neq 0$, and consequently $\phi$ is
  level-coercive by~\cite[Theorem 3.26(a)]{RoW 98}, thus admits a minimizer (see~\cite[Chapter 3D]{RoW 98}).
  \smallskip

  \noindent
  (b) For $\lambda>0$, we observe that $ p(b,\lambda)=\lambda v(b,\lambda)$,
  where
  \[
    v(b,\lambda)%
    =\inf_{x\in \R^n} \left\{\frac{1}{2\lambda}\|Ax-b\|^2+  R(x)\right\}.
  \]
  The latter fits the pattern of~\cite[Theorem~2]{FGH 20} with
  $\omega:=\frac{1}{2}\|\cdot\|^2, f:=R$ and $L(x,b)=Ax-b$. Given any
  $\bar x\in \argmin_{x\in \R^n} \left\{ \frac{1}{2}\|Ax-b\|^2+\lambda
    R(x)\right\}$ it follows from this result\footnote{Alternatively, this could
    be derived also from \Cref{th:InfProj}.} that
  \begin{eqnarray*}
    \p v(b,\lambda)& = &   \set{\left(v,-\half\|y\|^2\right)}{y=\frac{1}{\lambda}(A\bar x-b), \; -A^Ty\in \p R(\bar x), v=-y}\\
&= & \left\{ \left(\frac{b-A\bar x}{\lambda}, -\frac{1}{2}\left\|\frac{A\bar x-b}{\lambda}\right\|^2\right)\right\}.
  \end{eqnarray*}
  Therefore, since $v$ is convex, $v$ is differentiable at $(b,\lambda)$ with
  $\nabla v(b,\lambda) = \left(\frac{b-A\bar x}{\lambda},
    -\frac{1}{2}\left\|\frac{A\bar x-b}{\lambda}\right\|^2\right)^T$, see
  \eg{}~\cite[Theorem 25.2]{Roc 70}. Hence, by the product rule, we find that
  $p$ is differentiable at $ (b,\lambda)$ with
  \begin{eqnarray*}
    \nabla p(b,\lambda) 
=  \begin{pmatrix} b-A\bar x\\
     -\frac{1}{2\lambda}\|A\bar x-b\|^2
   \end{pmatrix}+
\begin{pmatrix} 0\\ \frac{1}{2\lambda}\|A\bar x-b\|^2+ R(\bar x)
\end{pmatrix}=\begin{pmatrix} b-A\bar x\\  R(\bar x)\end{pmatrix}.
  \end{eqnarray*}
  The addendum about continuous differentiability follows readily
  from~\cite[Corollary 25.5.1]{Roc 70}.

\smallskip

\noindent
(c) First note that, by the posed assumptions, we have
$A\cdot R\in \Gamma_0(\R^n)$, see \cite[Theorem 16.3]{Roc 70}. Now, using the
\emph{Moreau envelope} \cite[Definition 1.22]{RoW 98} and \emph{epigraphical
  multiplication}~\cite[Exercise 1.28]{RoW 98}, we observe that
\begin{eqnarray*}
p(b,\lambda)& = & -\inf_{y\in\R^m}\left\{ \frac{1}{2}\|y\|^2-\ip{b}{y}+\lambda\star R^*(A^Ty)\right\}\\
& = & \frac{1}{2}\|b\|^2-\inf_{y\in\R^m}\left\{  \lambda\star (R^*\circ A^T)(y)+\frac{1}{2}\|y-b\|^2\right\}\\
& = &\frac{1}{2}\|b\|^2- e_1(\lambda\star (R^*\circ A^T))(b)\\
& = & e_1(\lambda (A\cdot R) )(b)\\
&= & \lambda e_{\lambda}(A\cdot R)(b)\\
& \to & \frac{1}{2}d^2_{\cl(A\cdot \dom R)}(\bar b)\quad \text{as}\quad(b,\lambda)\to (\bar b,0),
\end{eqnarray*}
where $d_\mathcal{A}(z)$ denotes the Euclidean distance of $z$ to the set $\mathcal{A}$. Here the first identity is  due to Fenchel-Rockafellar duality~\cite[Example 11.41]{RoW 98},  the fourth uses \cite[Example 11.26]{RoW 98}, and the fifth  follows from the definition of the Moreau envelope and the fact that $\dom (A\cdot R) = A\,\dom(R)$.  The limit property
uses \cite[Proposition 4(c)]{FGH 20}. Realizing that  $\frac{1}{2}d^2_{\cl(A\cdot \dom R)}(\bar b)=\inf_{x\in \cl(\dom R)} \frac{1}{2}\|Ax-\bar b\|^2$ gives the desired statement.

\smallskip

\noindent
(d) In the proof of part (a) we saw that, for $\lambda>0$, we have  $p(b,\lambda)=\lambda v(b,\lambda)$ where $v$ is (jointly) convex (\eg{} see~\Cref{th:InfProj}). The convexity of $p(\cdot,\bar \lambda)\;(\bar \lambda>0)$ follows. On the other hand, for $\bar b\in \R^m$, $\lambda, \mu>0$ and $t\in(0,1)$, we have  
\begin{eqnarray*}
  \lefteqn{p(\bar b,t\lambda+(1-t)\mu)} \\& = & \inf_{x\in \R^n} \left\{\frac{1}{2}\|Ax-\bar b\|^2+(t\lambda+(1-t)\mu) R(x)\right\}\\
 &\geq & t\cdot\inf_{x\in \R^n} \left\{\frac{1}{2}\|Ax-\bar b\|^2+\lambda R(x)\right\}+(1-t) \cdot\inf_{x\in \R^n} \left\{\frac{1}{2}\|Ax-\bar b\|^2+\mu R(x)\right\} \\
& = & tp(\bar b,\lambda)+(1-t) p(\bar b,\mu).
\end{eqnarray*}
(e) This follows immediately from (b).
\end{proof}

\begin{remark}
 \Cref{prop:RLS} remains valid (with the appropriate adjustments) when $\frac{1}{2}\|\cdot\|^2$ is replaced by a (squared) weighted Euclidean norm    $\half \ip{V{\cdot}}{\cdot}$ for some symmetric positive definite matrix $V$. \hfill$\diamond$
\end{remark}

\noindent
Using  $R=\|\cdot\|_1$ in  \cref{eq:RegLS} we can state the following immediate result for the LASSO problem.

\begin{corollary}[LASSO]\label{cor:LASSO}  The LASSO problem \cref{eq:LASSO} always has a solution, and the following hold for its value function
\begin{equation*}
  p:(b,\lambda)\in \R^n\times \R_{+} \mapsto\inf_{x\in \R^n} \left\{\frac{1}{2}\|Ax-b\|^2+\lambda \|x\|_1\right\}:
\end{equation*}
\begin{itemize}
\item[(a)]  Let $(\bar b,\bar \lambda)\in \R^m\times \R_{++}$ and let 
$\bar x$ be a corresponding solution of \cref{eq:LASSO}. Then  $p$ is continuously   differentiable at $(\bar b,\bar \lambda)$ with 
\[
\nabla p(\bar b,\bar \lambda)= \begin{pmatrix} \bar b-A\bar x\\ \|\bar x\|_1
\end{pmatrix}.
\] 
In particular,  given $(\bar b,\bar \lambda)\in \R^m\times \R_{++}$,  the residual   $\bar b-A\bar x$ and regularizer value $\|\bar x\|_1$ do not depend on the particular solution $\bar x$.  
\item[(b)] For any $\bar{b} \in \mathbb{R}^m$, $p(b,\lambda)\to \half d^2_{\rge A}(\bar b)$ as $(b,\lambda)\to (\bar b, 0)$.
\end{itemize}
\end{corollary}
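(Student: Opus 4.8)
The plan is to obtain both parts as direct specializations of \Cref{prop:RLS} to the regularizer $R := \|\cdot\|_1 \in \Gamma_0(\mathbb{R}^n)$. The two facts about this $R$ that I would record first are $\dom R = \mathbb{R}^n$ and, since $\|\cdot\|_1$ is a norm and hence positively homogeneous of degree one, $R^\infty = \|\cdot\|_1$. In particular $R^\infty(x) = \|x\|_1 > 0$ for every $x \neq 0$, so in particular for every $x \in \ker A \setminus \{0\}$; \Cref{prop:RLS}(a) then yields that \cref{eq:LASSO} has a solution for every $(b,\lambda) \in \mathbb{R}^m \times \mathbb{R}_{++}$, which proves the first assertion of the corollary.

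For part (a), I would fix $(\bar b,\bar\lambda) \in \mathbb{R}^m \times \mathbb{R}_{++}$ with a corresponding solution $\bar x$. Having just shown that solutions exist throughout the open neighborhood $U := \mathbb{R}^m \times \mathbb{R}_{++}$ of $(\bar b,\bar\lambda)$, the ``moreover'' clause of \Cref{prop:RLS}(b) applies and gives that $p$ is continuously differentiable at $(\bar b,\bar\lambda)$ with
\[
  \nabla p(\bar b,\bar\lambda) = \begin{pmatrix} \bar b - A\bar x \\ R(\bar x)\end{pmatrix} = \begin{pmatrix} \bar b - A\bar x \\ \|\bar x\|_1 \end{pmatrix}.
\]
The independence of $\bar b - A\bar x$ and $\|\bar x\|_1$ from the chosen solution $\bar x$ is then immediate: it is exactly \Cref{prop:RLS}(e), or, even more directly, it follows because the left-hand side $\nabla p(\bar b,\bar\lambda)$ does not reference $\bar x$ at all.

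For part (b), since $\dom R = \mathbb{R}^n$ we have $\rge A^T \cap \dom R = \rge A^T \ni 0$, so \Cref{prop:RLS}(c) applies at $(\bar b, 0)$ and delivers
\[
  p(b,\lambda) \to \inf_{x \in \cl(\dom R)} \tfrac{1}{2}\|Ax - \bar b\|^2 = \inf_{x \in \mathbb{R}^n} \tfrac{1}{2}\|Ax - \bar b\|^2 \quad\text{as } (b,\lambda) \to (\bar b,0),
\]
using $\cl(\dom R) = \mathbb{R}^n$. The only remaining step is the elementary identification of the limit: substituting $y = Ax$ turns the infimum into $\inf_{y \in \rge A}\tfrac{1}{2}\|y - \bar b\|^2 = \tfrac{1}{2}d^2_{\rge A}(\bar b)$, the infimum being attained since $\rge A$ is a closed subspace. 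Since every step is a plug-in into \Cref{prop:RLS}, there is no genuine obstacle here; the only places that warrant a line of justification are the identity $R^\infty = \|\cdot\|_1$ and the passage $\inf_{x}\|Ax-\bar b\|^2 = d^2_{\rge A}(\bar b)$.
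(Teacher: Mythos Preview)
Your proposal is correct and matches the paper's approach exactly: the corollary is stated there as an immediate specialization of \Cref{prop:RLS} with $R=\|\cdot\|_1$, and you have spelled out precisely the verifications (computing $R^\infty=\|\cdot\|_1$, noting $\dom R=\R^n$, and identifying $\inf_x \tfrac12\|Ax-\bar b\|^2 = \tfrac12 d_{\rge A}^2(\bar b)$) that make this specialization go through.
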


\noindent
As another special case of \cref{eq:RegLS}, we consider the case $R=\frac{1}{2}\|\cdot\|^2$ which is known as Tikhonov regularization.

\begin{corollary}[Tikhonov regularization] \label{cor:Tikhonov} 
The following hold for the value function
\[ 
  p:(b,\lambda)\in \R^n\times \R_{+} \mapsto\inf_{x\in \R^n} \left\{\frac{1}{2}\|Ax-b\|^2+\frac{\lam}{2}\|x\|^2\right\}:
\]
\item[(a)] $p$ is continuously differentiable on $\R^m\times \R_{++}$ with  
\[
\nabla p(\bar b,\bar \lambda)= \begin{pmatrix} \bar b-Ax(\bar b,\bar \lambda)\\  \frac{1}{2}\|x(\bar b,\bar \lambda)\|^2
\end{pmatrix},
\] 
where $x(b,\lambda)=(\lambda A^TA+I)^{-1}(A^Tb).$
\item[(b)] For any $\bar{b} \in \mathbb{R}^m$, $p(b,\lambda)\to \half d^2_{\rge A}(\bar b)$ as $(b,\lambda)\to (\bar b, 0)$.
\end{corollary}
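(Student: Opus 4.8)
The plan is to obtain both statements as immediate specializations of \Cref{prop:RLS} to the regularizer $R=\frac{1}{2}\|\cdot\|^2\in\Gamma_0(\R^n)$ in \cref{eq:RegLS}; essentially nothing new needs to be proved beyond checking that the hypotheses specialize correctly. Two preliminary observations do the work. First, $R$ is a positive definite quadratic, so its recession function is $R^\infty=\delta_{\{0\}}$, and hence $R^\infty(x)=+\infty>0$ for every $x\in\ker A\setminus\{0\}$; by \Cref{prop:RLS}(a) this already guarantees that \cref{eq:RegLS} has a solution for every $(b,\lambda)\in\R^m\times\R_{++}$. Second, $\dom R=\R^n$. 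Moreover, the objective $x\mapsto\frac{1}{2}\|Ax-b\|^2+\frac{\lambda}{2}\|x\|^2$ is strongly convex with modulus $\lambda>0$ (its Hessian $A^TA+\lambda I$ is positive definite), so the solution is the \emph{unique} point $x(b,\lambda)$ characterized by the normal equations $(A^TA+\lambda I)x=A^Tb$, equivalently by the first-order optimality condition $A^T(Ax-b)+\lambda x=0$.

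For part (a), I would apply \Cref{prop:RLS}(b) at an arbitrary $(\bar b,\bar\lambda)\in\R^m\times\R_{++}$. Since, by the previous step, \cref{eq:RegLS} has a (unique) solution on the entire open set $U=\R^m\times\R_{++}$, which is a neighborhood of $(\bar b,\bar\lambda)$, \Cref{prop:RLS}(b) yields that $p$ is continuously differentiable on $U$ with
\[
  \nabla p(\bar b,\bar\lambda)=\bigl(\bar b-Ax(\bar b,\bar\lambda),\ R(x(\bar b,\bar\lambda))\bigr)=\bigl(\bar b-Ax(\bar b,\bar\lambda),\ \frac{1}{2}\|x(\bar b,\bar\lambda)\|^2\bigr),
\]
which is exactly the asserted formula.

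For part (b), I would invoke \Cref{prop:RLS}(c). Its hypothesis $\rge A^T\cap\dom R\neq\emptyset$ is trivially met since $\dom R=\R^n$ (for instance $0$ lies in both sets), so the proposition gives $p(b,\lambda)\to\inf_{x\in\cl(\dom R)}\frac{1}{2}\|Ax-\bar b\|^2$ as $(b,\lambda)\to(\bar b,0)$. Since $\cl(\dom R)=\R^n$ and $\{Ax:x\in\R^n\}=\rge A$ is a closed linear subspace of $\R^m$, this limit equals $\inf_{y\in\rge A}\frac{1}{2}\|y-\bar b\|^2=\frac{1}{2}d^2_{\rge A}(\bar b)$. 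As a consistency check with part (a), letting $\lambda\downarrow 0$ in the normal equations gives $x(b,\lambda)\to A^\dagger b$, hence $Ax(b,\lambda)\to P_{\rge A}b$, so $\frac{1}{2}\|Ax(b,\lambda)-b\|^2\to\frac{1}{2}d^2_{\rge A}(b)$, matching the limiting value. I do not anticipate any genuine difficulty in this proof: the whole argument is a matter of verifying that the hypotheses of \Cref{prop:RLS} specialize correctly and reading off the conclusions, the only points requiring a line of justification being the identity $R^\infty=\delta_{\{0\}}$, the characterization of the unique minimizer via the normal equations, and the elementary reduction $\inf_{x\in\R^n}\frac{1}{2}\|Ax-\bar b\|^2=\frac{1}{2}d^2_{\rge A}(\bar b)$ (which uses that $\rge A$ is closed).
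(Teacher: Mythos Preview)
Your proposal is correct and matches the paper's approach exactly: the corollary is presented in the paper without its own proof, as an immediate specialization of \Cref{prop:RLS} with $R=\tfrac{1}{2}\|\cdot\|^2$, and you verify precisely the hypotheses needed (existence via $R^\infty=\delta_{\{0\}}$, the trivial domain condition for part (c), and the explicit unique minimizer from the normal equations). One small remark: your normal equations $(A^TA+\lambda I)x=A^Tb$ are the correct ones, whereas the formula $x(b,\lambda)=(\lambda A^TA+I)^{-1}(A^Tb)$ printed in the statement appears to be a typo in the paper.
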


\noindent
The fact that the solution map  in the Tikhonov setting can be written out explicitly as $x(b,\lambda)=(\lambda A^TA+I)^{-1}(A^Tb)$ is due to the fact that the subdifferential of the regularizer $R=\frac{1}{2}\|\cdot\|^2$ is simply the identity, which is perfectly aligned with the quadratic fidelity term. There is no explicit inversion for general $R\in \Gamma_0(\R^n)$. This provides a nice segue   to  the following section, where we study the solution map for the $\ell_1$-regularizer through implicit function theory provided by variational analysis.

\section{The solution map of LASSO}\label{sec:Solution}

\noindent
This section is devoted to the study of the optimal solution function of the LASSO problem \cref{eq:LASSO}.

\subsection{Discussion of regularity conditions}

We start by recalling that, thanks to the analysis by Zhang \etal{}
in~\cite{ZYC 15}, a solution $\bar x$ of the LASSO problem~\cref{eq:LASSO}
(given $A,b,\lambda$) is unique if and only if the following set of conditions
holds:

\begin{assumption}[{\cite[Condition 2.1]{ZYC 15}}]\label{ass:Weak} For a minimizer  $\bar x$ of \cref{eq:LASSO} and  $I:=\supp(\bar x)$,
\begin{itemize}
\item[(i)] $A_I$ has full column rank $|I|$;
\item[(ii)] there exists $y\in \R^m$ such that $ \|A_{I^C}^Ty\|_\infty<1$ and $A^T_Iy=\sgn(\bar x_I)$.
\end{itemize}
\end{assumption}

\begin{remark} The  convex-analytically inclined reader may find it  illuminating to realize that \Cref{ass:Weak} is  equivalent to the following set of conditions (see, \eg{}  Gilbert's paper~\cite{Gil 17} for an explicit proof): 
\begin{itemize}
\item[(i)] $\rge A^T+\para (\p\|\cdot\|_1(\bar x))=\R^n$;
\item[(ii)] $\ri (\p\|\cdot\|_1(\bar x))\cap \rge A^T\neq \emptyset$.
\end{itemize}
Here $\para (\p\|\cdot\|_1(\bar x))$ and $\ri (\p\|\cdot\|_1(\bar x))$ are the subspace parallel to and the relative interior of the subdifferential $\p\|\cdot\|_1(\bar x)$, respectively. In the literature~\cite{VPF 15}, condition (i) is 
referred to as \emph{source condition} or \emph{range condition}. Alternative characterizations are provided in the interesting paper by Bello-Cruz \etal{}~\cite{BLN 22} which uses methods of second-order variational analysis similar to ours.
\hfill$\diamond$
\end{remark}

\noindent
Building on an example by Zhang \etal{}~\cite[p.~113]{ZYC 15}, we will show  that the solution uniqueness guaranteed by~\Cref{ass:Weak} is not stable to perturbations in the tuning parameter $\lambda$. In particular, this shows that \Cref{ass:Weak} is not a local property. 
A stronger set of conditions, which has already occurred in the literature 
(see \cite{Tib 13}) as a sufficient condition for uniqueness, is the following:
\begin{assumption}\label{ass:Intermed} For a minimizer $\bar x$ of \cref{eq:LASSO} and 
  \begin{equation}
    \label{eq:def_equicorr_set}
    J = J(\bar{x}) :=\set{i\in\{1,\dots,n\}}{|A_i^T(b-A\bar x)|=\lambda},
  \end{equation}
  we have that $A_J$ has full column rank.
\end{assumption}

\noindent
The set $J$ in~\cref{eq:def_equicorr_set} is referred to as the \emph{equicorrelation set} in the literature~\cite{Tib 13}. Using the optimality conditions for LASSO, we note that $\supp(\bar x) \subseteq J(\bar x)$.
Moreover, a simple continuity argument shows the following: if $(A_k,b_k,\lambda_k)\to (\bar A,\bar b,\bar \lambda)\in \R^{m\times n}\times \R^m\times \R_{++}$ and $x_k\to \bar x$ solves the LASSO for 
$(A_k,b_k,\lambda_k)$ and $\bar x$ is its solution for $(\bar A,\bar b,\bar \lambda)$, then the respective equicorrelation sets satisfy for all $k$ sufficiently large
\begin{equation}\label{eq:J}
J(x_k)\subseteq J(\bar x).
\end{equation}

\noindent
In particular, we find that \cref{ass:Intermed} is a local property, \ie{} stable to small pertubations of the data.
An even stronger set of conditions, which has already occurred in the literature  (see~\cite[Theorem~1]{Fuc 04}) is the following:
\begin{assumption}
  \label{ass:Strong}
  For a minimizer $\bar x$ of \cref{eq:LASSO} and $I=I(\bar x):=\supp(\bar x)$,
  we have
  \begin{itemize}
  \item[(i)] $A_I$ has full column rank $|I|$;
  \item[(ii)] $\|A_{I^C}^T(b-A_I\bar x_I)\|_\infty<\lambda$.
  \end{itemize}
\end{assumption}

\noindent 
\Cref{ass:Strong}(ii) is referred to as a \emph{nondegeneracy condition}~\cite{VDF 17} (or~\cite[Chapter~3.3.1]{VPF 15}). 

\begin{remark}
  \label{rmk:strong-asmp-ii-equivalences}
  Given a solution $\bar x$, \Cref{ass:Strong}(ii) is equivalent to either of the two following conditions:
  \begin{enumerate}
  \item $\frac{1}{\lambda} A^T(b-A\bar x)\in \ri(\p \|\cdot\|_1)(\bar x)$;
  \item $I=J$.
  \end{enumerate}
  The first condition is a direct consequence of the optimality of $\bar x$ and the second becomes apparent after noting that $I\subseteq J$.\hfill$\diamond$
\end{remark}

\noindent
The relation between the different assumptions is clarified now.

\begin{lemma}\label{lem:AssLASSO}  It holds that
  \[
    \Cref{ass:Strong} \quad \Longrightarrow \quad \Cref{ass:Intermed} \quad  \Longrightarrow  \quad \Cref{ass:Weak}.
  \]
\end{lemma}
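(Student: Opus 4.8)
The statement is a chain of two implications, $\Cref{ass:Strong}\Rightarrow\Cref{ass:Intermed}\Rightarrow\Cref{ass:Weak}$, so I would prove each implication separately, working with a fixed minimizer $\bar x$ of \cref{eq:LASSO} and writing $I:=\supp(\bar x)$, $J:=J(\bar x)=\{i:|A_i^T(b-A\bar x)|=\lambda\}$ throughout. The key preliminary fact, already noted in the excerpt right after \cref{eq:def_equicorr_set}, is that $I\subseteq J$; this follows from the optimality condition $A^T(b-A\bar x)\in\lambda\,\p\|\cdot\|_1(\bar x)$, which forces $|A_i^T(b-A\bar x)|=\lambda$ whenever $\bar x_i\neq 0$ (by \cref{eq:SubL1}).

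\textbf{First implication ($\Cref{ass:Strong}\Rightarrow\Cref{ass:Intermed}$).} Assume \Cref{ass:Strong}. By \Cref{rmk:strong-asmp-ii-equivalences}(2), \Cref{ass:Strong}(ii) is equivalent to $I=J$. Hence $A_J=A_I$, and \Cref{ass:Strong}(i) says exactly that $A_I$ has full column rank, which is \Cref{ass:Intermed}. (If one prefers not to invoke the remark, one argues directly: for $i\notin I$ one has $A_i^T(b-A\bar x)=A_i^T(b-A_I\bar x_I)$, and \Cref{ass:Strong}(ii) gives $|A_i^T(b-A_I\bar x_I)|<\lambda$, so $i\notin J$; combined with $I\subseteq J$ this yields $I=J$.)

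\textbf{Second implication ($\Cref{ass:Intermed}\Rightarrow\Cref{ass:Weak}$).} Assume $A_J$ has full column rank. Since $I\subseteq J$, the columns of $A_I$ form a subset of the columns of $A_J$, hence $A_I$ also has full column rank $|I|$; this is \Cref{ass:Weak}(i). For \Cref{ass:Weak}(ii) I need $y\in\R^m$ with $A_I^Ty=\sgn(\bar x_I)$ and $\|A_{I^C}^Ty\|_\infty<1$. The natural candidate is $y:=\tfrac1\lambda(b-A\bar x)$: optimality of $\bar x$ gives $A_I^Ty=\sgn(\bar x_I)$ automatically (the subgradient coordinates on the support are exactly the signs), and for $i\notin J$ we have $|A_i^Ty|=\tfrac1\lambda|A_i^T(b-A\bar x)|<1$ by definition of $J$. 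The one point that needs care is the indices $i\in J\setminus I$, where $|A_i^Ty|=1$, which would only give the non-strict bound $\|A_{I^C}^Ty\|_\infty\le 1$. To fix this I perturb $y$ along the nullspace-type direction afforded by the full column rank of $A_J$: since $A_J$ has full column rank, I can choose $w\in\R^{|I|}$ (viewed inside $\R^{|J|}$ with zeros off $I$, or more directly choose a vector in $\rge A_I$) and set $y_\epsilon:=y+\epsilon u$ for a suitable $u$ with $A_I^Tu=0$ but $A_i^Tu\neq 0$ strictly decreasing $|A_i^Ty_\epsilon|$ for the finitely many $i\in J\setminus I$ — concretely, use that $A_J$ full column rank means the map $v\mapsto A_J^Tv$ is surjective onto $\R^{|J|}$, so one can prescribe $A_J^Ty'$ to have sign-entries on $I$ and strictly-$<1$ entries on $J\setminus I$, and then this $y'$ together with the already-strict bound on $I^C\setminus J$ (which is stable under small perturbations) gives \Cref{ass:Weak}(ii).

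\textbf{Main obstacle.} The routine parts are $I\subseteq J$ and the rank monotonicity. The one genuinely delicate step is upgrading the bound on $J\setminus I$ from $\le 1$ to $<1$ in the second implication: one must use the full column rank of $A_J$ (not merely $A_I$) to produce a dual certificate $y$ that is \emph{strictly} feasible off the support, exploiting surjectivity of $A_J^T$ to freely prescribe the entries indexed by $J\setminus I$ while keeping the entries on $I$ pinned to $\sgn(\bar x_I)$ and continuity to preserve strictness on $I^C\setminus J$. This is exactly the place where \Cref{ass:Intermed} is strictly stronger than what \Cref{ass:Weak} alone would give, and it is where I would spend the bulk of the argument.
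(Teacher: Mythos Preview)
Your first implication is handled exactly as in the paper: \Cref{ass:Strong}(ii) forces $J=I$, so full column rank of $A_I$ is full column rank of $A_J$.

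For the second implication, your route is correct but genuinely different from the paper's. The paper argues \emph{indirectly}: it cites \cite[Lemma~2]{Tib 13} to conclude that \Cref{ass:Intermed} implies uniqueness of the LASSO minimizer, and then invokes \cite[Theorem~2.1]{ZYC 15}, which states that uniqueness is \emph{equivalent} to \Cref{ass:Weak}. Your argument, by contrast, is \emph{direct and constructive}: starting from the canonical dual $y=\lambda^{-1}(b-A\bar x)$ (which already satisfies $A_I^Ty=\sgn(\bar x_I)$ and $|A_i^Ty|<1$ for $i\notin J$), you perturb along a $u$ with $A_J^Tu$ prescribed---this is exactly where full column rank of $A_J$ enters, via surjectivity of $A_J^T:\R^m\to\R^{|J|}$---so as to keep $A_I^Ty_\epsilon=\sgn(\bar x_I)$, push $|A_i^Ty_\epsilon|$ strictly below $1$ for $i\in J\setminus I$, and preserve the already strict bound on $J^C$ by continuity. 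This works and is self-contained, whereas the paper's proof is shorter but outsources the substance to two external results. Your identification of the ``main obstacle'' (upgrading $\le 1$ to $<1$ on $J\setminus I$) is precisely the point where the full column rank of $A_J$, rather than merely $A_I$, is needed; the paper's indirect route hides this behind the cited uniqueness results.
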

\begin{proof}
  The fact that \Cref{ass:Strong} implies \Cref{ass:Intermed} is clear as
  \Cref{ass:Strong}(ii) implies that $J=I$
  (see~\Cref{rmk:strong-asmp-ii-equivalences}).  As for the second implication, note that \cite[Lemma~2]{Tib 13} shows that \cref{ass:Intermed}
  implies uniqueness of the solution, and observe that uniqueness is, by~\cite[Theorem~2.1]{ZYC 15},  equivalent to~\cref{ass:Weak}.
\end{proof}

\noindent
The following result shows, in particular, that \Cref{ass:Strong} is a local property as well. 

\begin{lemma}[Constancy of support]\label{lem:ConSupp}  For $(\bar A,\bar b,\bar \lambda) \in \mathbb{R}^{m \times n} \times \mathbb{R}^m \times \mathbb{R}_{++}$ let  $\bar x$  be a minimizer of  \cref{eq:LASSO}  such that \Cref{ass:Strong}(ii) holds. Assume that $(A_k,b_k,\lambda_k)\to (\bar A,\bar b,\bar \lambda)$ and that $x_k$ is  a solution of \cref{eq:LASSO} given $(A_k,b_k,\lambda_k)$ such that $\{x_k\}\to \bar x$. Then $\supp(x_k)=\supp(\bar x)$ for all $k$ sufficiently large.
\end{lemma}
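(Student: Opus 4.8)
The plan is to exploit the two pieces of \Cref{ass:Strong}(ii) that survive under perturbation: the strict inequality $\|A_{I^C}^T(b-A_I\bar x_I)\|_\infty<\lambda$ and, via the optimality conditions, the fact that the equicorrelation set is \emph{exactly} the support $I$. Write $I:=\supp(\bar x)$. First I would record the optimality conditions for the LASSO at $\bar x$: there is a sign vector $\bar s\in\p\|\cdot\|_1(\bar x)$ with $A^T(\bar b-\bar A\bar x)=\bar\lambda\,\bar s$; on $I$ this forces $\bar s_I=\sgn(\bar x_I)$ and $|A_i^T(\bar b-\bar A\bar x)|=\bar\lambda$ for $i\in I$, while \Cref{ass:Strong}(ii) says $|A_i^T(\bar b-\bar A\bar x)|<\bar\lambda$ for $i\notin I$; hence $J(\bar x)=I$ (this is \Cref{rmk:strong-asmp-ii-equivalences}). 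The key quantitative facts are therefore: (1) $\min_{i\in I}|\bar x_i|>0$, and (2) $\max_{i\in I^C}|A_i^T(\bar b-\bar A\bar x)|<\bar\lambda$, both strict.

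Next I would pass to the limit along the perturbed data. Since $(A_k,b_k,\lambda_k)\to(\bar A,\bar b,\bar\lambda)$ and $x_k\to\bar x$, and $A_i^T(b-Ax)$ depends continuously on $(A,b,x)$, fact (2) gives $\max_{i\in I^C}|(A_k)_i^T(b_k-A_kx_k)|<\lambda_k$ for all large $k$; by the optimality conditions for $x_k$ this means the subgradient coordinate of $\|\cdot\|_1$ at $(x_k)_i$ lies in the open interval $(-1,1)$ for every $i\in I^C$, which forces $(x_k)_i=0$ for all $i\in I^C$. Thus $\supp(x_k)\subseteq I$ for all large $k$ — this is essentially the inclusion \cref{eq:J} specialized to the present situation, and it is the easy half. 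For the reverse inclusion, I would use fact (1): since $x_k\to\bar x$ and $|\bar x_i|>0$ for each $i\in I$, we get $(x_k)_i\ne0$ for all large $k$, hence $I\subseteq\supp(x_k)$. Combining the two inclusions yields $\supp(x_k)=I=\supp(\bar x)$ for all $k$ sufficiently large.

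The main obstacle — really the only nontrivial point — is making the first inclusion airtight, i.e.\ arguing that the strict inequality in \Cref{ass:Strong}(ii) is genuinely \emph{open} in the data and that it transfers to the optimality conditions of $x_k$. Concretely: the map $(A,b,x)\mapsto\max_{i\in I^C}|A_i^T(b-A x)|-\lambda$ is continuous, and it is strictly negative at $(\bar A,\bar b,\bar x,\bar\lambda)$ by hypothesis, so it stays strictly negative in a neighborhood; since $(A_k,b_k,x_k,\lambda_k)$ eventually enters that neighborhood, the inequality holds for $x_k$. Then one invokes the stationarity condition $A_k^T(b_k-A_kx_k)\in\lambda_k\,\p\|\cdot\|_1(x_k)$ and the coordinatewise description of $\p\|\cdot\|_1$ in \cref{eq:SubL1}: a coordinate $i$ with $|(A_k)_i^T(b_k-A_kx_k)|<\lambda_k$ cannot correspond to a nonzero $(x_k)_i$, since that would require the $i$th subgradient entry to equal $\sgn((x_k)_i)=\pm1$, contradicting strict inequality. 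One should note that this argument does not require $x_k$ or $\bar x$ to be the unique minimizer (although uniqueness in fact holds here by \Cref{lem:AssLASSO}); it uses only that each $x_k$ is \emph{a} solution. I would also remark that the argument shows more: not only the support but also the sign pattern $\sgn(x_k)$ equals $\sgn(\bar x)$ for large $k$, which is what makes the subsequent implicit-function analysis of the solution map go through.
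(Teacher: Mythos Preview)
Your argument is correct and, in fact, considerably more elementary than the one the paper gives. The paper does \emph{not} argue directly on the first-order conditions: it lifts the problem to the epigraph $\Omega=\epi\|\cdot\|_1$, rewrites optimality of $\bar x$ as $\bar z=(\bar x,\|\bar x\|_1)\in\argmin_{\Omega}\phi$ for a smooth $\phi$, interprets \Cref{ass:Strong}(ii) as the nondegeneracy condition $-\nabla\phi(\bar z)\in\ri N_\Omega(\bar z)$, shows that $P_{T_\Omega(z_k)}(-\nabla\phi(z_k))\to 0$ via nonexpansiveness of the projection, and then invokes the active-set identification result of Burke and Mor\'e~\cite[Corollary~3.6]{BuM 88} for polyhedral constraints to conclude that the active faces at $z_k$ and $\bar z$ coincide for large $k$; a separate argument in the supplement then translates ``same active face of $\epi\|\cdot\|_1$'' into ``same support''.

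Your proof bypasses all of this: you read off $J(\bar x)=I$ from \Cref{ass:Strong}(ii), push the strict inequality $\max_{i\in I^C}|A_i^T(b-Ax)|<\lambda$ through the limit by continuity, and use the coordinatewise description of $\p\|\cdot\|_1$ to force $(x_k)_{I^C}=0$; the reverse inclusion is immediate from $x_k\to\bar x$. This is a genuinely different route: it is self-contained, uses no external machinery, and even yields the sign constancy $\sgn(x_k)_I=\sgn(\bar x)_I$ as you note. The paper's approach, by contrast, is phrased so as to slot into the general Burke--Mor\'e framework, which would pay off if one wanted an analogous statement for more general polyhedral regularizers; for the $\ell_1$-norm specifically, your direct argument is both shorter and more transparent.
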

\begin{proof}
  Under the conditions posed in the lemma, set
  $\bar z:= (\bar x,\|\bar x\|_1)$, $\Omega:=\epi \|\cdot\|_1$ and
  $\phi(x,t)=\frac{1}{2\bar \lambda}\|\bar A x-\bar b\|^2+t$. Observe
  that optimality of $\bar x$ implies $\bar z\in\argmin_{(x,t)\in \Omega} \phi(x,t)$. In addition, we find
  that, by~\cref{ass:Strong}(ii) and~\cref{eq:RIN}, $\bar z$ is
  \emph{nondegenerate} (\cite[Definition~1]{BuM 88}) in the sense that
  \begin{equation}
    \label{eq:ND}
    -\nabla \phi(\bar z) %
    = \left( \frac{1}{\lambda} A^{T}(b - A\bar{x}), -1 \right) %
    \in \ri (N_{\Omega}(\bar z)).
  \end{equation}
  Similarly, for $\phi_k(x,t) := \frac{1}{2\lambda_k}\|A_k x-b_k\|^2+t$,
  we have $ z_k:=(x_k,\|x_k\|_1)\in\argmin_{(x,t)\in \Omega}\phi_k(x,t) $ for
  all $k\in \bN$. Hence, by first-order optimality \cite[Theorem 12.3]{NoW 06},
  we have $ -\nabla\phi_k(z_k)\in T_{\Omega}(z_k)^\circ $ for all $k\in \bN$. In
  particular, by Moreau decomposition~\cite[Chapter~14.1]{bauschke2011convex},
  one has $P_{T_\Omega(z_k)}(-\nabla\phi_k(z_k))=0$.  Hence, for the (1-Lipschitz)
  projection $P_{T_\Omega(z_k)}$ onto the tangent cone $T_{\Omega}(z_k)$, we
  find
  \begin{equation}\label{eq:Tangent}
    \begin{array}{rcl}
      \|P_{T_{\Omega}(z_k)}(-\nabla \phi(z_k))\|%
      & =& \|P_{T_{\Omega}(z_k)}(-\nabla \phi(z_k))-P_{T_{\Omega}(z_k)}(-\nabla \phi_k(z_k))\|\\
      &\leq &\|\nabla\phi(z_k)-\nabla\phi_k(z_k)\|\\
      &\underset{k\to \infty}{\to}& 0,
    \end{array}
  \end{equation}
  as $(A_k,b_k,\lambda_k,z_k)\to (\bar A,\bar b,\bar \lambda,\bar z)$.
  Nondegeneracy of $\bar{z}$ and polyhedrality of $\Omega$, together
  with~\cref{eq:Tangent} and~\cref{eq:ND}, are the sufficient conditions for
  applying~\cite[Corollary~3.6]{BuM 88} (with $f:=\phi$), by which we infer, for
  $k$ sufficiently large, that the \emph{active constraints} at $z_k$ are equal
  those at $\bar z$, whence $\supp(x_k)=\supp(\bar x)$
  (\emph{cf.}~\Cref{SM:Sol}).
\end{proof}
\noindent
 We now present the example announced above, which expands on an example in Zhang \etal{} \cite{ZYC 15}. 
 
 \begin{example}
   \label{ex:Counter}
   Consider the LASSO problem~\cref{eq:LASSO} with 
\[
A=\begin{pmatrix} 1 & 0 & 2\\ 0 & 2 & -2 
\end{pmatrix}
\AND 
b=\begin{pmatrix} 1\\1
\end{pmatrix}.
\]
The unique solution for $\bar \lambda=1$ (see \cite[p.~113]{ZYC 15}) is $\bar x=(0,\;1/4,\;0)^T$ with $I(\bar x)=\{2\}$. 
Indeed, we observe that  
\[
\bar x_I= 1/4,\quad   A_I=\begin{pmatrix} 0 \\2\end{pmatrix}\AND A_{I^C}^T=\begin{pmatrix}1 & 0 \\ 2 & -2  \end{pmatrix}.
\]
In particular, $A_I$ has full column rank, and  setting $\bar y:=[1/2, 1/2]^T$, we find 
\[
  A_{I}^T\bar y%
  = 1%
  = \sgn(\bar x_I) %
  \AND %
  \|A_{I^C}^T\bar y\|_\infty %
  =\left\|\binom{1/2}{0}\right\|_\infty%
  =1/2%
  <1.
\]
Therefore, the solution $\bar x$ satisfies \Cref{ass:Weak}, which confirms  its uniqueness. On the other hand, we find that 
$\|A_{I^C}^T(b-A_I\bar x_I)\|_\infty=\left\|\binom{1}{1}\right\|_\infty=1=\bar \lambda$.
Hence, \Cref{ass:Strong} is violated and, as we shall see, uniqueness of the solution is not preserved under small perturbations of $\lambda$. Indeed, for $\lambda\in (0,1)$, consider the point
$
\bar x^{\lambda}:=(1-\lambda,\; \frac{2-\lambda}{4},\;  0)^T$ and note  that $\bar{x}^{\lambda} \to \bar{x}$ as $\lambda \to \bar{\lambda}$. Then 
\[
A^Tb-A^TA\bar x^{\lambda}=
\begin{pmatrix}1\\ 2\\0\end{pmatrix}- 
\begin{pmatrix}1
-\lambda\\2-\lambda\\-\lambda \end{pmatrix}=\begin{pmatrix}\lambda\\\lambda\\ \lambda \end{pmatrix}\in \lambda \p\|\cdot\|_1(\bar x^{\lambda}),
\]
and hence $\bar x^{\lambda}$ solves the LASSO problem for any $\lambda\in (0,1).$ Moreover, we have $I(\bar x^{\lambda})=\{1,2\}$ and, hence, $\sgn(\bar x^{\lambda}_{I(\bar x^{\lambda})})=(1,\;1)^T$. Now, $A_{I(\bar x^{\lambda})}^Ty=\sgn(\bar x^\lambda_{I(\bar x^{\lambda})})$ only admits  $y=(1,\;1/2)^T$ as a solution and $\|A^T_{I(\bar x^{\lambda})^C}y \|_\infty=1$. This shows that \Cref{ass:Weak} is violated and, consequently, $\bar x^{\lambda}$ is not the unique solution to \cref{eq:LASSO}. Indeed, it can be seen that, for any $\lambda\in (0,1)$, the points
\[
\begin{pmatrix} 1-\lambda-2t\\
\frac{2-\lambda+4t}{4}\\
t
\end{pmatrix},  \quad \forall t\in \left[0, \frac{1-\lambda}{2}\right),
\]
solve the LASSO problem \cref{eq:LASSO}. 

On the other hand, for $\lambda \in (1,2)$, consider $\bar x^\lambda:=(0, \frac{2-\lambda}{4},0)$. Then 
\[
  A^Tb-A^TA\bar x^{\lambda}=
  \begin{pmatrix}1\\ \lambda\\2-\lambda\end{pmatrix}\in \lambda \p\|\cdot\|_1(\bar x^\lambda),\quad \forall\lambda\in (1,2),
\]
hence $\bar x^\lambda$ solves the LASSO problem for any $\lambda\in (1,2)$. Moreover, we see that  $A_I$ has full column rank and 
\[
  \|A_{I^C}^T(b-A_I\bar x^\lambda_I)\|_\infty
  =\left\|\begin{pmatrix}1 \\ 2-\lambda\end{pmatrix} \right\|_\infty=1<\lambda.
\]
Therefore \Cref{ass:Strong} is satisfied, and $\lambda \mapsto \bar x^\lambda$ is the solution function on $(1,2)$.
\hfill $\diamond$
\end{example}

\subsection{Variational analysis of the solution function}
\noindent
Necessary ingredients for our study are the normal and tangent  cone to the graph of the subdifferential of the $\ell_1$-norm.

\begin{lemma}[Normal and tangent cone of $\gph \p \|\cdot\|_1$]\label{lem:Normal} For $(\bar x,\bar y)\in \gph \p\|\cdot\|_1$ 
we have 
\[
  N_{\gph \p \|\cdot\|_1}(\bar x,\bar y)=\prod_{i=1}^n \begin{cases}\{0\}\times \R,& \bar x_i\neq 0, \bar y_i=\sgn(\bar x_i),\\
    \R_+\times \R_-\cup \{0\}\times \R\cup \R\times \{0\},& \bar x_i=0, \bar y_i=-1,\\
    \R_-\times \R_+\cup \{0\}\times \R\cup \R\times \{0\},& \bar x_i=0, \bar y_i=1,\\
    \R\times \{0\}, &\bar x_i=0, |\bar y_i|<1,
\end{cases}
\]
and
\[
  T_{\gph \p \|\cdot\|_1}(\bar x,\bar y)\subseteq \prod_{i=1}^n \begin{cases}\R\times \{0\},& \bar x_i\neq 0, \bar y_i=\sgn(\bar x_i),\\
    \R_{-}\times \{0\}\cup \{0\}\times \R_+,& \bar x_i=0, y_i=-1,\\
    \{0\}\times \R_-\cup \R_{+}\times \{0\} & \bar x_i=0, y_i=1,\\
    \{0\}\times \R, &\bar x_i=0, |y_i|<1.
 \end{cases}
\]

\end{lemma}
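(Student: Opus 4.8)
The plan is to reduce everything to a one–dimensional computation via separability. Since $\|x\|_1=\sum_{i=1}^n|x_i|$, one has $\p\|\cdot\|_1(x)=\prod_{i=1}^n\p|\cdot|(x_i)$, so that, after the orthogonal permutation of coordinates $(x,y)\mapsto\big((x_1,y_1),\dots,(x_n,y_n)\big)$, the graph $\gph\p\|\cdot\|_1$ becomes the product $\prod_{i=1}^n C$ with $C:=\gph\p|\cdot|\subseteq\R^2$. I would then use the product rule for (regular and limiting) normal cones (\cite[Proposition~6.41]{RoW 98}) to obtain $N_{\gph\p\|\cdot\|_1}$ as the corresponding product of the $N_C$'s, and note that $T$ of a product is always contained in the product of the $T$'s --- this is immediate from the outer-limit definition, since a convergent difference quotient in a product space has convergent components. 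Thus it remains to compute $T_C(\bar x,\bar y)$ and $N_C(\bar x,\bar y)$ for $(\bar x,\bar y)\in C$ in each of the four listed cases.

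The set $C$ is the planar ``staircase'' made of the horizontal rays $\{(t,1):t>0\}$ and $\{(t,-1):t<0\}$ joined by the vertical segment $\{0\}\times[-1,1]$. In the two smooth cases this is routine: if $\bar x\neq 0$ then near $(\bar x,\sgn\bar x)$ the set $C$ is the horizontal line $\R\times\{\bar y\}$, so $T_C=\R\times\{0\}$ and $N_C=\{0\}\times\R$; and if $\bar x=0$, $|\bar y|<1$, then near $(0,\bar y)$ the set $C$ is a relatively open piece of the $y$–axis, so $T_C=\{0\}\times\R$ and $N_C=\R\times\{0\}$. These give the first and fourth lines of each display.

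The real content is in the corner cases $\bar x=0$, $\bar y=\pm1$. By the symmetry $(x,y)\mapsto(-x,-y)$, which maps $C$ onto itself and swaps the two corners, it suffices to handle $(\bar x,\bar y)=(0,-1)$. Near this point $C$ is the union of the two rays $\{0\}\times[-1,1]$ and $\{(t,-1):t\le0\}$ issuing from $(0,-1)$; evaluating the contingent cone directly from the definition gives $T_C(0,-1)=\big(\{0\}\times\R_+\big)\cup\big(\R_-\times\{0\}\big)$, and polarity then yields the regular normal cone $\hat N_C(0,-1)=\R_+\times\R_-$. For the limiting normal cone I would compute the outer limit of $\hat N_C$ along points of $C$ approaching $(0,-1)$: the corner itself contributes $\R_+\times\R_-$; points in the relative interior of the vertical segment, where $\hat N_C=\R\times\{0\}$, contribute $\R\times\{0\}$; and points on the horizontal ray, where $\hat N_C=\{0\}\times\R$, contribute $\{0\}\times\R$. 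Since near $(0,-1)$ the regular normal cone takes no other values, this outer limit is exactly the union of these three closed sets, which is the claimed $N_C(0,-1)$; the case $(0,1)$ follows by the same symmetry.

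The hard part --- really the only step that is not bookkeeping --- is this last one: correctly enumerating the regular normal cones at the points of $C$ near a corner and verifying that their outer limit neither drops nor adds a piece. The polyhedrality and one–dimensional structure of $C$ make this manageable, and the remainder of the argument is just the product reduction together with the elementary smooth–piece computations.
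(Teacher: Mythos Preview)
Your proposal is correct and follows essentially the same route as the paper: both invoke the separability $\p\|\cdot\|_1(x)=\prod_{i=1}^n\p|\cdot|(x_i)$ together with the product rule \cite[Proposition~6.41]{RoW 98} to reduce to the one-dimensional graph $C=\gph\p|\cdot|$. The paper leaves the one-dimensional case-by-case computation implicit, whereas you spell it out; your treatment of the corner cases via the outer limit of regular normal cones along the three approaching strata is accurate and matches the stated result.
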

\begin{proof} Use \cite[Proposition 6.41]{RoW 98} and the separability of  $\p\|\cdot\|_1(x)=\prod_{i=1}^n \p |\cdot|(x_i)$. 
\end{proof}

\noindent 
The proof of our main result relies on some general facts about implicit
set-valued functions which are in essence covered in~\cite[Theorem 9.56]{RoW
  98}. Some more terminology is needed: a set-valued map
$S:\bE_1\rightrightarrows \bE_2$ is called \emph{metrically regular} at
$(\bar x, \bar u)\in \gph S$ if there exist neighborhoods $V$ of $\bar x$ and
$W$ of $\bar u$, respectively, and $\kappa>0$ such that
\begin{align*}
  d_{S^{-1}(u)}(x)\leq \kappa \cdot d_{S(x)}(u), %
  \quad \forall x\in V,\;u\in W.
\end{align*}
When $S$ has closed graph, metric regularity can be characterized by the Mordukhovich criterion (see~\cite[Theorem~3.3]{Mor 18}(ii)).

\begin{lemma}[Mordukhovich criterion]
  \label{lem:mordukhovich-criterion}
  Let $T : \reals^{n} \rightrightarrows \reals^{m}$ have closed graph with
  $(\bar{x}, \bar{y}) \in \gph T$. $T$ is metrically regular around
  $(\bar{x}, \bar{y})$ if and only if $\ker D^{*} T(\bar{x} \mid \bar{y}) = \{0\}$.
\end{lemma}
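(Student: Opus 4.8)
The statement is the classical Mordukhovich criterion, and the plan is to derive it by chaining together three facts, each of which is available in the references cited by the paper. First I would pass to the inverse map: by the duality between metric regularity and the Lipschitz-like (Aubin) property, see \cite[Theorem~9.43]{RoW 98}, the map $T$ is metrically regular around $(\bar x,\bar y)$ if and only if $T^{-1}:\reals^{m}\rightrightarrows\reals^{n}$ is Lipschitz-like around $(\bar y,\bar x)\in\gph T^{-1}$. Note that the closedness of $\gph T$ assumed in the lemma passes to $\gph T^{-1}$, which is precisely the local closedness needed to apply the coderivative machinery to $T^{-1}$.

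Second, I would invoke the coderivative criterion for the Aubin property, \cite[Theorem~9.40]{RoW 98} (equivalently \cite[Theorem~3.3]{Mor 18}): a set-valued map $F$ with locally closed graph is Lipschitz-like around a point $(\bar u,\bar v)\in\gph F$ if and only if its coderivative there is nonsingular, i.e.\ $D^{*}F(\bar u\mid\bar v)(0)=\{0\}$. Applied to $F=T^{-1}$, this shows that $T$ is metrically regular around $(\bar x,\bar y)$ if and only if $D^{*}(T^{-1})(\bar y\mid\bar x)(0)=\{0\}$.

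Third, I would translate this back into a condition on $D^{*}T$. Since $\gph T^{-1}$ is the image of $\gph T$ under the coordinate swap $(x,y)\mapsto(y,x)$, which is a linear isomorphism, the limiting normal cones satisfy $(p,q)\in N_{\gph T^{-1}}(\bar y,\bar x)$ if and only if $(q,p)\in N_{\gph T}(\bar x,\bar y)$. Unwinding the definition \cref{eq:def_coderivative} then gives $p\in D^{*}(T^{-1})(\bar y\mid\bar x)(0)\iff(0,p)\in N_{\gph T}(\bar x,\bar y)\iff -p\in\ker D^{*}T(\bar x\mid\bar y)$, so that $D^{*}(T^{-1})(\bar y\mid\bar x)(0)$ and $\ker D^{*}T(\bar x\mid\bar y)$ are negatives of one another; in particular one of them equals $\{0\}$ exactly when the other does. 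Combining the three equivalences yields the claim.

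The only genuinely deep ingredient is the second one, whose proof rests on a nontrivial variational argument (a penalization/Ekeland-type estimate bounding $d_{T^{-1}(u)}(x)$ in terms of the coderivative norm of $T$); I would not attempt to reprove it, and the paper is correct to defer it to \cite{RoW 98, Mor 18}. Steps one and three are purely formal and short once that result is in hand, so in practice the lemma is essentially a restatement of \cite[Theorem~3.3(ii)]{Mor 18}.
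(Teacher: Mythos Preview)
Your proposal is correct. The paper does not actually supply a proof of this lemma; it simply states the result and cites \cite[Theorem~3.3(ii)]{Mor 18}, so there is nothing to compare your argument against beyond that citation. Your three-step outline (pass to $T^{-1}$ via the duality between metric regularity and the Aubin property, invoke the coderivative criterion for the Aubin property, and unwind the normal-cone relationship between $\gph T$ and $\gph T^{-1}$) is exactly the standard route and is consistent with the cited references; your final observation that the lemma is essentially a restatement of \cite[Theorem~3.3(ii)]{Mor 18} matches the paper's own treatment precisely.
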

\noindent
If $S$ is a \emph{monotone operator}\footnote{$S$ is said to be a monotone operator if $y_1\in S(x_1),y_2\in S(x_2)$ implies $\ip{y_1-y_2}{x_1-x_2}\geq 0$.},
metric regularity (at $(\bar x, \bar u)$) is equivalent to strong
metric regularity \cite{DoR 14} (see also~\cite{AAG 08}), which means
that the inverse map $S^{-1}$ is locally Lipschitz around $\bar u$.
This is exactly our interest in this study  (and will be applicable as the operator in question is the subdifferential of a convex function).
Monotonicity permits us to leverage the rich calculus for coderivatives to verify strong metric regularity and avoid the substantially more challenging task of computing strict graphical derivatives (which is the standard option for characterizing strong metric regularity in the absence of montonicity~\cite[Theorem~4D.1]{DoR 14}).

Moreover, the map $S:\bE_1\rightrightarrows\bE_2$ is called
\emph{proto-differentiable} at $(\bar x,\bar u)\in \gph S$ if, in addition to
the outer limit in~\cref{eq:GD}, for all $u \in \bE_1$ and for any
$z\in DS(\bar x\mid\bar y)(u)$ and any $\{t_k\}\downarrow 0$ there
exist $\{u_k\}\to u$ and $\{z_k\}\to z$ such that
$ z_k\in (S(\bar x+ t_k u_k)-\bar y)/t_k $ for all $k\in \bN$~\cite[Proposition~8.41]{RoW 98}. This property is
satisfied for the maps relevant to our study: $F=\p\|\cdot\|_1$
(see~\cite[Remark 1]{FGH 20}), and hence
$x\mapsto \frac{1}{\lambda}A^T(Ax-b)+F(x)$ (\cite[Lemma 4]{FGH 20}).

\begin{proposition}
  \label{prop:Implicit}
  Let $(\bar p,\bar x)\in \R^d\times \R^n$, let
  $F:\mathbb \R^n\rightrightarrows \R^n$ be monotone (locally around $\bar x$)
  and let $f:\R^d\times \R^n\to \R^n$ be continuously differentiable at
  $(\bar p, \bar x)$ such that $f(\bar p,\cdot)$ is monotone (locally at
  $\bar x$). Define $S:\R^d\rightrightarrows\R^n$ by
  \begin{align*}
    S(p)=\set{x\in \R^n}{0\in f(p,x)+F(x)}, \quad \forall p \in \mathbb{R}^d.
  \end{align*}
  Assume $(\bar p,\bar x)\in \gph S$ (\ie{}
  $0\in f(\bar p,\bar x)+F(\bar x)$) and
  $Q : \reals^{n} \rightrightarrows \reals^{n}$ given by
  $Q := f(\bar{p}, \cdot) + F$ has closed graph with $(\bar{x}, 0) \in \gph Q$
  and $\ker D^{*}Q(\bar{x} \mid 0) = \{0\}$. Then, the following hold:
  \begin{itemize}
  \item[(a)] $Q$ is \emph{strongly metrically regular} at
    $(\bar x,0) \in \gph Q$.
  \item[(b)] $S$ is locally Lipschitz at $\bar{p}$.
  \item[(c)] If $F$ is \emph{proto-differentiable} at
    $(\bar x, -f(\bar p,\bar x))$, then the graphical derivative
    $DS(\bar p|\bar x)$ is single-valued and locally Lipschitz with
    \begin{equation}
      \label{eq:graphical_derivative}
      DS(\bar p)(q)=\set{w \in \mathbb{R}^n}{0\in DG(\bar p,\bar x|0)(q,w)}, %
      \quad \forall q \in \mathbb{R}^d,
    \end{equation}
    for $G(p,x):=f(p,x)+F(x)$. In particular, $S$ is directionally
    differentiable\footnote{\ie{}
      $S'(\bar p;d):=\lim_{t\downarrow 0}\frac{S(\bar p+td)}{t}$ exists for
      all $d\in \R^d$.} at $\bar p$ with directional derivative
    \begin{align*}
      S'(\bar p;\cdot)=DS(\bar p)(\cdot).
    \end{align*}
    In addition, $S$ is locally Lipschitz at $\bar p$ with modulus
    \begin{align*}
      L\leq\limsup_{p\to \bar p} \max_{\|q\|\leq 1} \|DS(p)(q)\|.
    \end{align*}
    If $DS(\bar p)$ is linear, then $S$ is differentiable at $\bar p$ and the
    derivative equals the graphical derivative $DS(\bar p)$.
  \end{itemize}
\end{proposition}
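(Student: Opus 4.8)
The plan is to dispatch the three parts in order, with (a) supplying the strong metric regularity that (b) and (c) rest on. \emph{For (a):} since $f(\bar p,\cdot)$ and $F$ are each monotone near $\bar x$, so is their sum $Q=f(\bar p,\cdot)+F$, which has closed graph with $(\bar x,0)\in\gph Q$ and, by hypothesis, $\ker D^*Q(\bar x\mid 0)=\{0\}$. The Mordukhovich criterion (\Cref{lem:mordukhovich-criterion}) then gives metric regularity of $Q$ around $(\bar x,0)$, and monotonicity upgrades this to strong metric regularity (see the discussion preceding the proposition, and \cite{DoR 14,AAG 08}); hence $Q^{-1}$ is single-valued and locally Lipschitz around $0$, with $Q^{-1}(0)=\bar x$.

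\emph{For (b):} I would recast the inclusion $0\in f(p,x)+F(x)$ as a fixed-point equation for $Q$. Writing $g(p,x):=f(p,x)-f(\bar p,x)$, which is continuously differentiable near $(\bar p,\bar x)$ with $g(\bar p,\bar x)=0$ and $D_xg(\bar p,\bar x)=0$, one has $x\in S(p)$ iff $-g(p,x)\in Q(x)$, i.e.\ $x=Q^{-1}(-g(p,x))$. Since $D_xg(\bar p,\bar x)=0$, on a small enough neighborhood the Lipschitz modulus of $g(p,\cdot)$ falls strictly below the reciprocal of that of $Q^{-1}$, so $x\mapsto Q^{-1}(-g(p,x))$ is a contraction uniformly in $p$; the implicit-mapping theorem (\cite[Theorem 9.56]{RoW 98}; see also \cite[Theorem 2B.5]{DoR 14}) then yields that $S$ is single-valued and Lipschitz continuous on a neighborhood of $\bar p$, with $S(\bar p)=\bar x$.

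\emph{For (c):} first I would observe that $G$, and likewise its slice $Q=f(\bar p,\cdot)+F$, inherits proto-differentiability from $F$, since adding the $C^1$ map $f$ preserves proto-differentiability (cf.\ the argument in \cite[Lemma 4]{FGH 20}). Combined with the strong metric regularity of $Q$ from (a), the semidifferentiability part of the implicit-mapping theorem \cite[Theorem 9.56]{RoW 98} gives that $S$---already single-valued and Lipschitz by (b)---is semidifferentiable at $\bar p$ with $T_{\gph S}(\bar p,\bar x)=\{(q,w):0\in DG(\bar p,\bar x\mid 0)(q,w)\}$, which unwinds to \cref{eq:graphical_derivative}. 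By the sum rule (\Cref{lem:Sum}), $DG(\bar p,\bar x\mid 0)(q,\cdot)=D_pf(\bar p,\bar x)q+DQ(\bar x\mid 0)(\cdot)$; strong metric regularity makes $DQ(\bar x\mid 0)^{-1}$ the (single-valued, Lipschitz) semiderivative of $Q^{-1}$, so $DS(\bar p)(q)=DQ(\bar x\mid 0)^{-1}(-D_pf(\bar p,\bar x)q)$ is single-valued and locally Lipschitz in $q$. Directional differentiability with $S'(\bar p;\cdot)=DS(\bar p)(\cdot)$ is then semidifferentiability read along one-sided difference quotients; the estimate $L\le\limsup_{p\to\bar p}\max_{\|q\|\le 1}\|DS(p)(q)\|$ follows from a segment-wise mean-value argument together with $\|DS(p)\|=\|\nabla S(p)\|$ at the points where the Lipschitz map $S$ is differentiable (cf.\ \cite[Chapter 9]{RoW 98}); and if $DS(\bar p)$ is linear, a semidifferentiable map with linear semiderivative is differentiable there, with derivative equal to that semiderivative.

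The delicate step is (c): aligning the hypotheses of the implicit-mapping theorem (closed graph of $Q$, its strong metric regularity, and proto-differentiability) so as to obtain the \emph{exact} graphical-derivative formula rather than merely an inclusion, and in particular its single-valuedness---which genuinely uses strong metric regularity of $Q$ itself (through invertibility of the linearization $DQ(\bar x\mid 0)$), not just Lipschitz continuity of $S$. Parts (a) and (b), by contrast, are comparatively routine given \Cref{lem:mordukhovich-criterion}, the monotone-operator equivalence, and the cited implicit-function machinery.
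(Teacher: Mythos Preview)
Your proposal is correct and lands in the same place as the paper's proof, but part (b) takes a genuinely different route. The paper does \emph{not} set up a contraction via $g(p,x)=f(p,x)-f(\bar p,x)$; instead it verifies the strict graphical derivative condition required by \cite[Theorem 9.56(b)]{RoW 98} directly: it shows that $0\in D_*G(\bar p,\bar x\mid 0)(0,w)$ forces $0\in D_*Q(\bar x\mid 0)(w)$ (via \Cref{lem:Sum}(b) and the $p$-independence of $F$), and then appeals to the characterization of strong metric regularity through the strict graphical derivative \cite[Theorem 9.54(b)]{RoW 98} to conclude $w=0$. Your Robinson-style argument (strong metric regularity of $Q$ plus a small $C^1$ perturbation with vanishing $x$-derivative) is a perfectly valid alternative and arguably more elementary. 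The trade-off is that the paper's route pays dividends in (c): the strict graphical derivative condition established in (b) is precisely the hypothesis of \cite[Theorem 9.56(c)]{RoW 98}, so (c) follows immediately. In your version you have to re-derive that condition in (c) from strong metric regularity of $Q$---which you do implicitly via ``$DQ(\bar x\mid 0)^{-1}$ is the semiderivative of $Q^{-1}$''---but it would be cleaner to say outright that strong metric regularity gives $D_*Q(\bar x\mid 0)^{-1}(0)=\{0\}$, hence $D_*G(\bar p,\bar x\mid 0)(0,\cdot)^{-1}(0)=\{0\}$, which is what \cite[Theorem 9.56(c)]{RoW 98} asks for. For the Lipschitz bound the paper cites \cite[Theorem 4B.2]{DoR 14} rather than a mean-value argument, and for the linearity-implies-differentiability step it cites \cite[Exercise 9.25(b)]{RoW 98}; your justifications are equivalent.
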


\begin{proof}

  (a) Since $Q$ has closed graph with $(\bar{x}, 0) \in \gph Q$ and
  $\ker D^{*}Q(\bar{x} \mid 0) = \{0\}$, it holds that $Q$ is metrically
  regular at $(\bar{x}, 0)$ by~\cref{lem:mordukhovich-criterion}. Because $Q$
  is a sum of (locally) monotone maps $f$ and $F$ and it is metrically regular
  at $(\bar x, 0) \in \gph Q$, \cite[Theorem~3G.5]{DoR 14} immediately gives
  that $Q$ is \emph{strongly metrically regular} there.

\smallskip

\noindent

(b) Let $0\in D_*G(\bar p, \bar x|0)(0,w)$ or, equivalently (applying~\Cref{lem:Sum}(b) to $D_*Q$ and using that $F$ is independent of $p$),
\[
  0\in D_xf(\bar p,\bar x)w+D_*F(\bar x|-f(\bar p,\bar x))(w)=D_*Q(\bar x|0)(w).
\]
Using (a) and the characterization of strong metric regularity via the strict graphical derivative (\cf~\cite[Theorem 9.54(b)]{RoW  98}, recalling that a strong metrically regular map has, by definition, a locally Lipschitz inverse) we have  $D_*Q(\bar x|0)^{-1}(0) = \{0\}$. This implies $w = 0$.
Hence, we can apply~\cite[Theorem 9.56(b)]{RoW 98}, which establishes
that $S$ is, in fact, locally Lipschitz.  \smallskip

\noindent
(c) Realize, by~\cite[Lemma 4]{FGH 20}, that $G$ is proto-differentiable at
$((\bar p, \bar x),0)$. Hence, observing that
$ D_*Q(\bar x|0)(w)=D_*G(\bar p,\bar x|0)(0,w), $ what was proven in (b)
and~\cite[Theorem 9.56 (c)]{RoW 98} yields that $S$ is \emph{semidifferentiable}
(\cf{}~\cite[Chapter 8H]{RoW 98}) at $(\bar b,\bar \lambda)$ and
that~\cref{eq:graphical_derivative} holds. The claim about single-valuedness
and Lipschitzness of $DS(\bar p)$ also follows from~\cite[Theorem 9.56 (c)]{RoW
  98}.

The Lipschitz bound for $S$ can be deduced from~\cite[Theorem 4B.2]{DoR 14}.
The claim about differentiability follows from~\cite[Exercise 9.25 (b)]{RoW 98} noting that \emph{calmness} is implied by local Lipschitzness (\cf{}~\cite[8(13)]{RoW 98}).
\end{proof}


A key step in applying~\cref{prop:Implicit} to the LASSO is establishing the
Mordukhovich criterion for the subdifferential map of its objective.
\begin{proposition}
  \label{prop:lasso-is-metrically-regular}
  Let $\bar{x} \in \reals^{n}$ be a solution of \cref{eq:LASSO} with
  equicorrelation set $J$ as in \cref{eq:def_equicorr_set}. Define
  $T : \reals^{n} \rightrightarrows \reals^{n}$ as
  $T(x) := \frac{1}{\lambda} A^T (Ax - b) + (\partial \|\cdot\|_1)(x)$, where
  $(A, b, \lambda)$ are as in \cref{eq:LASSO}. Then, $T$ has closed
  graph. Moreover, if \cref{ass:Intermed} holds, then
  $\ker D^{*}T(\bar{x} \mid 0) = \{0\}$ (i.e. $T$ is metrically regular at $(\bar x,0)$).
\end{proposition}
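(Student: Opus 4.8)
The plan is to compute the coderivative $D^{*}T(\bar x\mid 0)$ explicitly using the sum rule from \Cref{lem:Sum}(c) together with the description of the normal cone of $\gph\p\|\cdot\|_1$ in \Cref{lem:Normal}, and then show that its kernel is trivial precisely when $A_J$ has full column rank. Closedness of $\gph T$ is immediate: $T = f + \p\|\cdot\|_1$ where $f(x) = \frac1\lambda A^T(Ax-b)$ is continuous (affine) and $\p\|\cdot\|_1$ has closed graph since $\|\cdot\|_1\in\Gamma_0(\R^n)$; the sum of a continuous single-valued map and a closed-graph map has closed graph.

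For the main claim, set $\bar y := -f(\bar x) = \frac1\lambda A^T(b - A\bar x)$, so that $(\bar x,\bar y)\in\gph\p\|\cdot\|_1$ by optimality of $\bar x$. By \Cref{lem:Sum}(c),
\[
  D^{*}T(\bar x\mid 0)(w) = \tfrac1\lambda A^TAw + D^{*}(\p\|\cdot\|_1)(\bar x\mid\bar y)(w),
\]
so $w\in\ker D^{*}T(\bar x\mid 0)$ means $-\tfrac1\lambda A^TAw\in D^{*}(\p\|\cdot\|_1)(\bar x\mid\bar y)(w)$, i.e.\ $\bigl(-\tfrac1\lambda A^TAw,\,-w\bigr)\in N_{\gph\p\|\cdot\|_1}(\bar x,\bar y)$. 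Using the product structure of $N_{\gph\p\|\cdot\|_1}$ from \Cref{lem:Normal} and the observation (noted after \cref{eq:def_equicorr_set} and in \Cref{rmk:strong-asmp-ii-equivalences}) that $i\in J$ iff $|\bar y_i| = 1$, while $i\notin J$ iff $|\bar y_i| < 1$, I would read off coordinatewise constraints. Writing $v := -\tfrac1\lambda A^TAw$: for $i\notin J$ the factor is $\R\times\{0\}$, forcing $-w_i = 0$, i.e.\ $w_i = 0$; for $i\in J$ with $\bar x_i\neq 0$ the factor is $\{0\}\times\R$, forcing $v_i = 0$; for $i\in J$ with $\bar x_i = 0$ (so $|\bar y_i| = 1$) the factor is one of the two "L-shaped" unions, which in any case forces the product $v_i w_i \le 0$ (and also, by inspection, $v_i = 0$ whenever $w_i \ne 0$, since the only point of the union with nonzero second coordinate $-w_i$ has first coordinate $0$). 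Thus $w_i = 0$ for all $i\notin J$, so $w = w_J$ is supported on $J$ and $A^TAw = A^T A_J w_J$; moreover $v_J = 0$, which means $A_J^T A_J w_J = 0$.

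From $A_J^T A_J w_J = 0$ we get $\|A_J w_J\|^2 = \langle w_J, A_J^TA_J w_J\rangle = 0$, hence $A_J w_J = 0$; since \cref{ass:Intermed} gives $A_J$ full column rank, $w_J = 0$, so $w = 0$. This proves $\ker D^{*}T(\bar x\mid 0) = \{0\}$, and metric regularity of $T$ at $(\bar x, 0)$ then follows from the Mordukhovich criterion, \Cref{lem:mordukhovich-criterion}, since $\gph T$ is closed. I expect the main obstacle to be the careful coordinatewise bookkeeping in the case $i\in J$, $\bar x_i = 0$: one must verify that each of the "L-shaped" normal-cone pieces $\R_+\times\R_-\cup\{0\}\times\R\cup\R\times\{0\}$ (and its mirror image) is incompatible with $-w_i\neq 0$ unless $v_i = 0$, so that the relation $v_J = 0$ genuinely holds on all of $J$ and not merely on $\supp(\bar x)$ — this is exactly where the equicorrelation set $J$, rather than the support $I$, is the right index set, and where \cref{ass:Intermed} (rather than the weaker \cref{ass:Weak}) is needed.
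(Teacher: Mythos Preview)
Your overall strategy matches the paper's, but the coordinatewise bookkeeping in the case $i\in J\setminus I$ contains two errors that break the argument as written. First, the sign of the product is backwards: for $\bar y_i=1$ the piece $\R_-\times\R_+$ admits $(v_i,-w_i)=(-1,1)$, i.e.\ $v_i=-1,\ w_i=-1$, so $v_iw_i=1>0$ (and symmetrically for $\bar y_i=-1$); the correct conclusion is $v_iw_i\ge 0$, not $\le 0$. Second, and more seriously, your ``by inspection'' claim that $v_i=0$ whenever $w_i\ne 0$ is false: that same point $(-1,1)\in\R_-\times\R_+$ has nonzero second coordinate and nonzero first coordinate. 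Consequently the assertion $v_J=0$ is unjustified (and in fact need not hold), so the step $A_J^TA_Jw_J=0$ is not established.

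The fix is short and is essentially what the paper does. From the (corrected) coordinatewise analysis you get $v_iw_i\ge 0$ for every $i$; summing gives $\langle v,w\rangle\ge 0$. Since $v=-\tfrac1\lambda A^TAw$, this reads $-\tfrac1\lambda\|Aw\|^2\ge 0$, hence $Aw=0$ and thus $v=0$. Now the inclusion becomes $(0,-w)\in N_{\gph\p\|\cdot\|_1}(\bar x,\bar y)$, and \Cref{lem:Normal} immediately forces $w_{J^C}=0$; combined with $Aw=0$ this gives $A_Jw_J=0$, and \cref{ass:Intermed} finishes. The paper obtains $\langle v,w\rangle\ge 0$ in one stroke by invoking maximal monotonicity of $\p\|\cdot\|_1$ together with \cite[Theorem~5.6]{Mor 18}, which says that coderivatives of maximal monotone operators are positive semidefinite; your coordinatewise route is a hands-on version of the same fact, but you must get the sign right and must not overclaim $v_i=0$.
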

\begin{proof} Let $0 \in D^* T(\bar{x} \mid 0) (w)$. It is required to show $w=0$.
  Note $T$ is the
  subdifferential of a closed, proper, convex function, thus is (maximal)
  monotone~\cite[Theorem~12.17]{RoW 98} and has closed
  graph~\cite[Theorem~24.4]{Roc 70}. By 
  \cref{lem:Sum}(c), we have
  \begin{align*}
    D^* T(\bar x \mid 0)(w) %
    = \frac{1}{\lambda} A^T A w + D^* (\partial \|\cdot \|_1)(\bar x \mid \bar u)(w),
  \end{align*}
  where $\bar u := \frac{1}{\lambda} A^T (b - A\bar x)$. Thus,
  $0 \in D^* T(\bar x \mid 0)(w)$ if and only if
  \begin{align}
    \label{eq:ATAw-coderiv-inclusion}
    - \frac{1}{\lambda} A^T A w \in D^* (\partial \|\cdot \|_1)(\bar x \mid \bar u)(w).
  \end{align}
  Now, since $\partial \|\cdot \|_1$ is
  globally maximal monotone on $\reals^n$, if
  $z \in D^*(\partial \|\cdot \|_1)(\bar x \mid \bar u)(w)$, then, by~\cite[Theorem 5.6]{Mor 18}, we have
  $\ip{z}{w} \geq 0$. Consequently, in view
  of~\cref{eq:ATAw-coderiv-inclusion} it holds that
  \begin{align*}
    0 \leq -\frac{1}{\lambda} \ip{A^T Aw}{w} = - \frac{1}{\lambda} \| Aw \|^{2}.
  \end{align*}
  As $\lambda > 0$, this implies
  $Aw = 0$. Hence, in view of~\cref{eq:ATAw-coderiv-inclusion} we have
  $0 \in D^*(\partial \|\cdot\|_1)(\bar x \mid \bar u) (w)$, which, by definition, is equivalent to $(0,-w) \in N_{\gph \partial
    \|\cdot\|_1}(\bar x, \bar u)$. Therefore, by definition of $\bar u$ and $J$
  (\cf{}~\cref{eq:def_equicorr_set}), using \cref{lem:Normal} gives
  $w_{J^C} = 0$, whence $A_J w_J = 0$. Finally, in view of
  \cref{ass:Intermed} (namely, $\ker A_J=\{0\}$) it holds that
  $w_J = 0$. Altogether, this shows $w = 0$, thereby establishing
  the Mordukhovich criterion $\ker D^{*}T(\bar{x} \mid 0) =
  \{0\}$. 
\end{proof}

We are now in a position to prove our main result. Here, for
$M \subseteq \{1,\dots,n\}$, we let $L_M:\R^{|M|} \to \R^n$ be given by
\begin{align*}
  L_M(r)_i %
  =
  \begin{cases}
    r_i& i\in M \\
    0 & \text{otherwise}
  \end{cases},
  \quad \forall i=1,\dots, n.
\end{align*}

\begin{theorem}\label{th:LASSOSol} For
  $(\bar b,\bar \lambda)\in \R^n\times \R_{++}$ let $\bar x$ be a solution of
  \cref{eq:LASSO} with $I:=\supp(\bar x)$. Then the following hold for the
  solution map
  \[
    S:(b,\lambda)\mapsto\argmin_{x\in \R^n}\left\{\frac{1}{2}\|Ax-b\|^2+\lambda \|x\|_1\right\}.
  \] 
  \begin{itemize}
  \item[(a)] If \Cref{ass:Intermed} holds at $\bar x$,   $S$ is locally Lipschitz   at $(\bar b,\bar \lambda)$ with (local) Lipschitz modulus 
    \[
      L\leq \frac{1}{\sigma_{\min}(A_{J})^{2}} \left( \sigma_{\max} \left( A_{J}
        \right) + \left\| \frac{A_{J}^{T} (A\bar x - \bar b)}{\bar \lambda}
        \right\| \right).
\] 
Moreover, $S$ is directionally differentiable at $(\bar b,\bar \lambda)$ and the directional derivative   $S'((\bar b,\bar \lambda); (\cdot, \cdot)):\R^m\times \R \to\R^n$ is locally Lipschitz and given as follows: for $(q,\alpha)\in \R^{m}\times \R$ there exists an index set $K=K(q,\alpha)$ with $I \subseteq K \subseteq J$ such that
\[
S'((\bar b,\bar \lambda);(q,\alpha))=L_K\left((A_K^TA_K)^{-1}A_K^T\left(q+\frac{\alpha}{\bar \lambda}(A\bar x-\bar b)\right)\right).
\]

\item [(b)] If \Cref{ass:Strong} holds at $\bar x$, then 
$S$ is continuously  differentiable at $(\bar b,\bar \lambda)$ with derivative
\[
DS(\bar b,\bar \lambda)(q,\alpha)=L_I\left((A_I^TA_I)^{-1}A_I^T\left(q+\frac{\alpha}{\bar \lambda}(A\bar x-\bar b)\right)\right), 
\quad \forall (q,\alpha) \in \mathbb{R}^m \times \mathbb{R}.
\]
In particular, $S$ is locally Lipschitz at $(\bar b,\bar \lambda) $ with constant 
\[
L\leq \frac{1}{\sigma_{\min}(A_{I})^{2}} \left( \sigma_{\max} \left( A_{I} \right) + \left\| \frac{A_{I}^{T} (A\bar x - \bar b)}{\bar \lambda} \right\| \right).
\]
\end{itemize}

\end{theorem}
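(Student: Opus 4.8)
The plan is to derive the entire statement from \cref{prop:Implicit}, applied to the \emph{normalized} optimality system of the LASSO. Write the subdifferential optimality condition for \cref{eq:LASSO} as $0\in f((b,\lambda),x)+F(x)$, where $f((b,\lambda),x):=\frac{1}{\lambda}A^T(Ax-b)$ and $F:=\partial\|\cdot\|_1$; then the map $S$ of the theorem is precisely the implicit map of \cref{prop:Implicit} with parameter $p=(b,\lambda)\in\R^m\times\R_{++}$. First I would verify its hypotheses at $\bar p=(\bar b,\bar\lambda)$: $F$ is globally maximal monotone and proto-differentiable (recorded in \cref{sec:tools-variational-analysis}); $f$ is $C^1$ near $(\bar p,\bar x)$ because $\bar\lambda>0$, and $f(\bar p,\cdot)$ is monotone since $A^TA\succeq 0$; the map $Q:=f(\bar p,\cdot)+F$ is the subdifferential of the closed, proper, convex function $\frac{1}{2\bar\lambda}\|A\cdot-\bar b\|^2+\|\cdot\|_1$, so it has closed graph and $(\bar x,0)\in\gph Q$ by optimality; and $\ker D^*Q(\bar x\mid 0)=\{0\}$ is exactly the content of \cref{prop:lasso-is-metrically-regular} under \cref{ass:Intermed}. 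Then \cref{prop:Implicit} delivers in one shot strong metric regularity of $Q$, local Lipschitzness of $S$ at $\bar p$, single-valuedness and local Lipschitzness of $DS(\bar p)$, semidifferentiability of $S$ at $\bar p$ with $S'(\bar p;\cdot)=DS(\bar p)(\cdot)$, and the modulus bound $L\le\limsup_{p\to\bar p}\max_{\|(q,\alpha)\|\le 1}\|DS(p)(q,\alpha)\|$. I would also note that, by \cref{eq:J} and preservation of full column rank, solutions at parameters near $\bar p$ again satisfy \cref{ass:Intermed}, hence are unique and close to $\bar x$, so this last $\limsup$ concerns an honest single-valued localization of $S$.

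The core of the proof is the explicit evaluation of $DS(\bar p)(q,\alpha)$ via \cref{eq:graphical_derivative}. Using the sum rule \cref{lem:Sum}(a) for $G(p,x)=f(p,x)+F(x)$ and computing $D_bf$, $D_\lambda f$, $D_xf$ directly, one arrives --- with $\bar u:=\frac{1}{\bar\lambda}A^T(\bar b-A\bar x)\in\partial\|\cdot\|_1(\bar x)$ and $r:=q+\frac{\alpha}{\bar\lambda}(A\bar x-\bar b)$ --- at the equivalence
\[
  w\in DS(\bar p)(q,\alpha)\ \Longleftrightarrow\ \frac{1}{\bar\lambda}A^T(r-Aw)\in DF(\bar x\mid\bar u)(w).
\]
Now insert the coordinatewise description of $T_{\gph\partial\|\cdot\|_1}(\bar x,\bar u)$ from \cref{lem:Normal}, using $\bar u_i=\sgn(\bar x_i)$ on $I:=\supp\bar x$, $|\bar u_i|=1$ exactly on $J\setminus I$, and $|\bar u_i|<1$ on $J^C$ (here $J$ is the equicorrelation set of \cref{eq:def_equicorr_set}, and $I\subseteq J$). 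The $i\in J^C$ coordinates force $w_i=0$; the $i\in I$ coordinates force $[A^T(r-Aw)]_i=0$; and for $i\in J\setminus I$, whenever $w_i\neq 0$ one again gets $[A^T(r-Aw)]_i=0$. Hence, with $K:=I\cup\supp(w)$, one has $I\subseteq K\subseteq J$, $w$ is supported on $K$, and $A_K^T(r-A_Kw_K)=0$; since $A_K$ inherits full column rank from $A_J$, this gives $w=L_K\left((A_K^TA_K)^{-1}A_K^Tr\right)$, the announced formula (single-valuedness of $w$ being already guaranteed by \cref{prop:Implicit}(c)).

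For the Lipschitz modulus I would estimate $\|DS(p)(q,\alpha)\|=\|(A_{K'}^TA_{K'})^{-1}A_{K'}^Tr'\|$ at a nearby $p=(b,\lambda)$ with (unique) solution $x$, where $r'=q+\frac{\alpha}{\lambda}(Ax-b)$ and $I(x)\subseteq K'\subseteq J(x)\subseteq J$ by \cref{eq:J}. Using $\|(A_{K'}^TA_{K'})^{-1}\|\le\sigma_{\min}(A_J)^{-2}$, $\|A_{K'}^Tq\|\le\sigma_{\max}(A_J)\|q\|$, $\|A_{K'}^T(Ax-b)\|\le\|A_J^T(Ax-b)\|$, together with $\|q\|\le 1$ and $|\alpha|\le 1$, yields
\[
  \|DS(p)(q,\alpha)\|\le\frac{1}{\sigma_{\min}(A_J)^2}\left(\sigma_{\max}(A_J)+\frac{1}{\lambda}\|A_J^T(Ax-b)\|\right),
\]
and letting $p\to\bar p$ (so $x\to\bar x$, $b\to\bar b$, $\lambda\to\bar\lambda$) produces the bound in (a). For part (b), \cref{ass:Strong} gives $I=J$ (\cref{rmk:strong-asmp-ii-equivalences}) and, via \cref{lem:ConSupp}, $\supp(x(p))=I$ for all $p$ near $\bar p$; then the coordinate analysis above has no sign-constrained indices, so $K=I$ for \emph{every} $(q,\alpha)$, making $DS(\bar p)$ linear --- whereupon the final clause of \cref{prop:Implicit}(c) upgrades it to a genuine derivative with the displayed formula, and continuity of $p\mapsto DS(p)$ (stability of \cref{ass:Strong}) gives $C^1$-regularity; the modulus bound specializes to the one with $A_I$.

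The step I expect to be the main obstacle is the graphical-derivative bookkeeping: checking that the many coordinatewise tangent-cone conditions of \cref{lem:Normal} are mutually consistent and collapse precisely to ``$w$ is supported on some $K$ with $I\subseteq K\subseteq J$ and $A_K^TA_Kw_K=A_K^Tr$'', and then carrying the submatrix singular-value inequalities through the $\limsup_{p\to\bar p}$ so that the final constant depends only on $A_J$ (resp.\ $A_I$) and on the residual at $(\bar b,\bar\lambda)$.
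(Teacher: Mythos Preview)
Your proposal is correct and follows essentially the same route as the paper: applying \cref{prop:Implicit} to $f((b,\lambda),x)=\frac{1}{\lambda}A^T(Ax-b)$ and $F=\partial\|\cdot\|_1$, invoking \cref{prop:lasso-is-metrically-regular} for the Mordukhovich criterion, unpacking the graphical-derivative inclusion via \cref{lem:Sum} and the tangent-cone description in \cref{lem:Normal}, and then bounding the modulus through the $\limsup$ formula. The only minor structural difference is in the Lipschitz-bound step: the paper extracts a specific maximizing sequence $(b_k,\lambda_k,q_k,\alpha_k)$ and passes to a constant $K$ by finiteness before taking the limit, whereas you give a direct uniform estimate at each nearby $p$ using $K'\subseteq J$ and the submatrix monotonicity of extremal singular values; both lead to the same bound.
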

\begin{proof} (a) We apply \Cref{prop:Implicit} with
  $f:(\R^m\times \R_{++})\times \R^n\to \R^n, \;
  f((b,\lambda),x)=\frac{1}{\lambda}A^T(Ax-b)$, where $p = (b,\lambda)$, and
  with the set-valued map $F:\R^n\rightrightarrows \R^n,\; F=\p\|\cdot\|_1$,
  observing that they satisfy the required smoothness and monotonicity
  assumptions (as the subdifferential operator of a convex function is monotone
  \cite[Chapter 12]{RoW 98}). To simplify notation, from now on we will denote
  $f((b,\lambda),x) = f(b,\lambda,x)$ and adopt a similar convention for other
  functions depending on both $(b,\lambda)$ and $x$. Then,
  $Q := f(b, \lambda, \cdot) + F$ is precisely $T$
  from~\cref{prop:lasso-is-metrically-regular}, which is thereby metrically
  regular at $(\bar{x}, 0)$. Hence, local Lipschitz continuity follows from
  \Cref{prop:Implicit}(b).

  Now, realize that $F$ is proto-differentiable by \cite[Lemma 4]{FGH 20}. We
  can therefore apply \Cref{prop:Implicit}(c) to see that
  $DS(\bar b,\bar \lambda)$ is (single-valued) locally Lipschitz with
  \begin{equation*}
    DS(\bar b,\bar \lambda)(q,\alpha)%
    = \set{w\in \R^n}{0\in DG(\bar b,\bar \lambda,\bar x|0)(q,\alpha,w)},
  \end{equation*} 
  where $G(b,\lambda,x)=f(b,\lambda, x)+F(x)$. By \Cref{lem:Sum}, we have with
  $\bar{u} := \frac{1}{\bar{\lambda}}A^{T}(\bar{b} - A\bar{x})$,
  \[
    DG(\bar b,\bar \lambda,\bar x| 0)(q,\alpha,w)%
    = -\frac{1}{\bar \lambda} A^T \left[ q+\frac{\alpha}{\bar \lambda}(A\bar
      x-\bar b)-Aw\right] + D(\p\|\cdot\|_1)(\bar x|\bar{u})(w).
  \]
  Define the partition $I^C = I^C_< \sqcup I^C_=$, where
  \begin{equation}
    \label{eq:def_I^C_partition}
    I^C_<:=\set{i\in I^C}{|A_i^T(b- A\bar x)|< \lambda}
    \AND I^C_=:=\set{i\in I^C}{|A_i^T(b-A\bar x)|= \lambda},
  \end{equation}
  and notice that $J=I\sqcup I^C_=$.  Using \Cref{lem:Normal} and the definition
  \cref{eq:def_graphical_derivative} of the graphical derivative, we find that
\begin{eqnarray*}
\lefteqn{0\in DG(\bar b,\bar \lambda,\bar x|0)(q,\alpha,w)}\\
  & \iff &  A^T\left[q+\frac{\alpha}{\bar \lambda}(A\bar x-\bar b)-Aw\right]\in \bar \lambda D(\p\|\cdot\|_1)(\bar x|\bar{u})(w)\\
& \implies & 
\left\{\begin{array}{l}\forall i\in I:  \left(\bar\lambda w_i,A_i^T\left[q+\frac{\alpha}{\bar \lambda}(A\bar x-\bar b)-Aw\right]\right)\in \R\times \{0\},\\
 \forall i\in I^C_<:\left(\bar\lambda w_i,A_i^T\left[q+\frac{\alpha}{\bar \lambda}(A\bar x-\bar b)-Aw\right]\right)\in  \{0\}\times \R,\\
 \forall i\in I^C_=:\left (\bar\lambda w_i,A_i^T\left[q+\frac{\alpha}{\bar \lambda}(A\bar x-\bar b)-Aw\right]\right)\in \R\times \{0\}\cup \{0\}\times \R
 \end{array}\right.\\
 & \implies &  
 \left\{\begin{array}{l}w_{I^C_<}=0,\\
 A_I^T\left[q+\frac{\alpha}{\bar \lambda}(A\bar x-\bar b)-Aw\right]= 0,\\
 w_iA_i^T\left[q+\frac{\alpha}{\bar \lambda}(A\bar x-\bar b)-Aw\right]= 0, \quad \forall i\in I_{=}^C.
 \end{array}\right.
\end{eqnarray*}
Setting  $K:=\set{i\in I_{=}^C}{w_i\neq 0}\cup I$, this yields $w_{K^C} = 0$ and
\[
w_K=(A_K^TA_K)^{-1}A_K^T\left[q+\frac{\alpha}{\bar \lambda}(A\bar x-\bar b)\right].
\]
Thus, $w$ and $K$ are unique for a given $(q,\alpha)$ with  $DS(\bar b,\bar \lambda)(q,\alpha)=w$. The desired local Lipschitzness of $S$ and the directional differentiability statements in (a)  follow from \Cref{prop:Implicit}(c). 

Since \Cref{ass:Intermed} is a local property (cf.\ \cref{eq:J} and the discussion prior), we can conclude that  $S$ satisfies all the proven properties above for all $(b,\lambda)$ sufficiently close to $(\bar b,\bar \lambda)$. Hence, by reiterating the above reasoning for nearby points, $S$ is directionally differentiable at $(b,\lambda)$ sufficiently close to $(\bar b,\bar \lambda)$, and $S'((b,\lambda);(\cdot,\cdot))$ is, in particular, continuous.  Thus,   from \Cref{prop:Implicit}(c) we infer that 
\[
L:=\limsup_{(b,\lambda)\to (\bar b,\bar \lambda)} \max_{\|(q,\alpha)\|\leq 1}\|S'((b,\lambda);(q,\alpha))\|
\]
is a local Lipschitz bound for $S$ at $(\bar b,\bar \lambda)$. Now let $(b_k,\lambda_k)\to (\bar b,\bar \lambda)$ such that
\[
 \max_{\|(q,\alpha)\|\leq 1}\|S'((b_k,\lambda_k);(q,\alpha))\|\to L.    
\]
As $S'((b_k,\lambda_k);(\cdot,\cdot))$ is continuous (for all $k\in\bN$), there exists  $\{(q_k,\alpha_k)\in \bB\}\to (\bar q,\bar \alpha)\in \bB$ such that 
\[
\|S'((b_k,\lambda_k);(q_k,\alpha_k))\|\to L.
\]
Let $K_k\subseteq J_k\subseteq J$ be the associated index sets. Without loss of generality (by finiteness), we can assume $K_k\equiv K\subseteq J$. Hence, for all $k\in \bN$, we have
\begin{align*}
  \left\| S' \left( (b_{k}, \lambda_{k}); (q_{k}, \alpha_{k}) \right) \right\| %
  & = \left\| L_{K} \left( (A_{K}^{T} A_{K})^{-1} A_{K}^{T} \left( q_{k} + \frac{\alpha_{k}}{\lambda_{k}} (A S (b_{k}, \lambda_{k}) - b_{k}) \right) \right) \right\| %
  \\
  & \leq \frac{1}{\sigma_{\min}(A_{K})^{2}} \left\| A_{K}^{T} q_{k} + \frac{\alpha_{k}}{\lambda_{k}} A_{K}^{T} (AS(b_{k}, \lambda_{k}) - b_{k}) \right\|,
\end{align*}
where the inequality uses that $\|L_{K}\|\leq 1$. Passing to the limit now yields
\begin{align*}
  L %
  & \leq \frac{1}{\sigma_{\min}(A_{K})^{2}} \left\| A_{K}^{T} \bar q + \frac{\bar \alpha}{\bar \lambda} A_{K}^{T} (A\bar x - \bar b) \right\| %
  \\
  & \leq \frac{1}{\sigma_{\min}(A_{K})^{2}} \left( \max_{\|q\| \leq 1} \|A_{K}^{T} q\| + \max_{|\alpha| \leq 1} \left\| \frac{\bar \alpha}{\bar \lambda} A_{K}^{T} (A\bar x - \bar b) \right\| \right) %
  \\
  & \leq \frac{1}{\sigma_{\min}(A_{K})^{2}} \left( \sigma_{\max} \left( A_{K} \right) + \left\| \frac{A_{K}^{T} (A\bar x - \bar b)}{\bar \lambda} \right\| \right) %
  \\
  & \leq \frac{1}{\sigma_{\min}(A_{J})^{2}} \left( \sigma_{\max} \left( A_{J} \right) + \left\| \frac{A_{J}^{T} (A\bar x - \bar b)}{\bar \lambda} \right\| \right). %
\end{align*}

\noindent
(b) 
Revisiting the proof of part (a) under the additional assumption that $I=J$, yields 
\[
DS(\bar b,\bar \lambda)(q,\alpha)=L_I\left((A_I^TA_I)^{-1}A_I^T\left(q+\frac{\alpha}{\bar \lambda}(A\bar x-\bar b)\right)\right).
\]
This map is clearly linear in $(q,\alpha)$, so the differentiability follows with \Cref{prop:Implicit}(c) and the expression for the derivative follows as well. In view of (a), $S$ is (Lipschitz) continuous at $(\bar b,\bar\lambda)$, hence by  \Cref{lem:ConSupp}, there exists $\varepsilon>0$ such that  
\[\supp(S(b,\lambda))=I,\quad \forall (b,\lambda)\in \cV:=B_{\varepsilon}(\bar b,\bar \lambda).
\]
Therefore, \Cref{ass:Strong} holds at $S(b,\lambda)$ for all $(b,\lambda)\in \cV$, hence by the same reasoning as for $\bar x=S(\bar b,\bar \lambda)$,  $S$ is differentiable at $(b,\lambda)$ with
\[
DS(b,\lambda)(q,\alpha)=L_I\left((A_I^TA_I)^{-1}A_I^T\left(q+\frac{\alpha}{ \lambda}(AS(b,\lambda)-b)\right)\right),\quad \forall (b,\lambda)\in \cV.
\]
Consequently, $S$ is continuously differentiable at $(\bar b,\bar \lambda)$.

The (improved) local Lipschitz constant at $(\bar b,\bar \lambda) $ follows from
(a) together with $I=J$.
\end{proof}

\begin{remark}
  It is straightforward to show that the Lipschitz constant $L$
  in~\cref{th:LASSOSol}(a) can also be bounded as
  $L \leq \sigma_{\min}(A_{J})^{-1} \left( 1 + \| A\bar{x} - \bar{b} \|/
    \bar{\lambda} \right)$.
\end{remark}

\noindent
The following, purely conceptual example illustrates the tightness of and the transition between 
\Cref{ass:Intermed} and \Cref{ass:Strong}.

\begin{example}[Soft-thresholding operator]
  \label{exmp:soft-thresh-operator}
  Consider the the case of \cref{eq:LASSO} with $A=I$. The solution map $S$
  from \cref{th:LASSOSol} is then the \emph{proximal operator} of the
  $\ell_1$-norm (also known as the soft-thresholding operator) as a function of
  the base point $b$ and the proximal parameter $\lambda$.  It is given by
  \[
    S(b,\lambda)= \left(
      \begin{cases}b_i+\lambda,& b_i<-\lambda, \\
        0,& |b_i|\leq \lambda,\\
        b_i-\lambda,& b_i>\lambda
      \end{cases}\right)_{i=1}^n,
    \quad \forall (b,\lambda)\in \R^n\times \R_{++}.
  \]
  It is locally Lipschitz as a function of $(b,\lambda)$ which is reflected in
  \Cref{th:LASSOSol}(a) since \Cref{ass:Intermed} is satisfied off-hand. Its
  points of nondifferentiability are exactly within the set
  \[
    \cF:=\set{(b,\lambda)\in \R^n\times \R_{++}}{\exists i\in \{1,\dots,n\}: |b_i|=\lambda}.
  \]
  This illustrates nicely the (sharp) transition from part (a) to part (b) in
  \Cref{th:LASSOSol}: for a given $(\bar b,\bar \lambda)\in \R^n\times \R_{++}$
  and $\bar x:=S(\bar b,\bar \lambda)$, \Cref{ass:Strong} is satisfied at
  $\bar x$ if and only if $(\bar b,\bar \lambda) \notin\cF$. It also reflects
  the fact that the locally Lipschitz solution map is continuously
  differentiable at the point of question if and only it is \emph{graphically
    regular} there \cite[Exercise 9.25 (d)]{RoW 98}.  \hfill $\diamond$
\end{example}

\begin{remark}
  \label{rmk:A-as-parameter}
  When $A$ is considered a parameter, \ie{}
  $f(A, b, \lambda, x) = \frac{1}{\lambda} A^T (Ax - b)$, one may show
  (see~\cref{rmk:4-10-supp}) that the following holds for the solution map
  $(A, b, \lambda) \mapsto \argmin_{x\in \reals^n} \{ \frac{1}{2} \|Ax - b\|^2
  + \lambda \|x\|_1\}$.
  \begin{itemize}
  \item[(a)] Under \Cref{ass:Intermed}, $S$ is   locally Lipschitz at $(\bar A, \bar b, \bar \lambda)$ with  constant 
    \begin{align*}
      L \leq \frac{1}{\sigma_{\min}(\bar A_J)^2} \left( \sigma_{\max}(\bar A_J) (1 + \|\bar x\|) + 
      \left\|\frac{\bar A_J^T(\bar A\bar x-\bar b)}{\bar \lambda}\right\| + \|\bar A\bar x - \bar b\|\right)
    \end{align*}
  \item[(b)] Under \Cref{ass:Strong}, $S$ is   locally Lipschitz at $(\bar A, \bar b, \bar \lambda)$ with  constant 
    \begin{align*}
      L \leq \frac{\sigma_{\max}(\bar A_J) (1 + \|\bar x\|) + \sqrt{I} + \|\bar A\bar x - \bar b\|}{\sigma_{\min}(\bar A_J)^2} 
    \end{align*}
  \end{itemize}
\hfill $\diamond$
\end{remark}

\noindent
When we fix the parameter $\bar b$, and look at the solution only as a function of the regularization parameter $\lambda$, we can get a significantly sharper Lipschitz  modulus.

\begin{corollary} 
  \label{cor:Lipschitz_lambda}
  In the setting of \Cref{th:LASSOSol},  the following hold  for 
  \begin{equation}
    \label{eq:solution_map_lambda}
    S:\lambda\mapsto\argmin_{x\in \R^n}\left\{\frac{1}{2}\|Ax-\bar b\|^2+\lambda \|x\|_1\right\}.
  \end{equation} 

  \begin{enumerate}
  \item[(a)] Under \Cref{ass:Intermed}, $S$ is  locally Lipschitz with  constant 
    $
    L\leq \sqrt{|J|} \sigma_{\min}(A_J)^{-2}
    $;
  \item[(b)] Under \Cref{ass:Strong},  $S$ is  locally Lipschitz with  constant 
    $
    L\leq \sqrt{|I|} \sigma_{\min}(A_I)^{-2}.
    $
  \end{enumerate}
\end{corollary}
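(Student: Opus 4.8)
The plan is to read the result off \Cref{th:LASSOSol} by holding the measurement vector fixed at $\bar b$, so that only the regularization parameter varies. First I would note that the map $S$ in \cref{eq:solution_map_lambda} is the two-variable solution map of \Cref{th:LASSOSol} restricted to the slice $b\equiv\bar b$, so its directional derivative at $\lambda$ in a direction $\alpha\in\R$ is $S'((\bar b,\lambda);(0,\alpha))$. By \Cref{th:LASSOSol}(a), for each such $\alpha$ there is an index set $K=K(\alpha)$ with $\supp(S(\bar b,\lambda))\subseteq K\subseteq J(S(\bar b,\lambda))$ such that
\[
  S'((\bar b,\lambda);(0,\alpha))
  = L_K\left((A_K^TA_K)^{-1}A_K^T\,\frac{\alpha}{\lambda}\bigl(AS(\bar b,\lambda)-\bar b\bigr)\right).
\]

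The decisive point — the only genuinely new ingredient beyond \Cref{th:LASSOSol} — is that the vector $\frac{1}{\lambda}A_K^T(AS(\bar b,\lambda)-\bar b)$ has every entry of modulus exactly $1$: since $K\subseteq J(S(\bar b,\lambda))$, the defining relation \cref{eq:def_equicorr_set} of the equicorrelation set gives $|A_i^T(\bar b-AS(\bar b,\lambda))|=\lambda$ for all $i\in K$, so $\|\frac{1}{\lambda}A_K^T(AS(\bar b,\lambda)-\bar b)\|=\sqrt{|K|}$. Using $\|L_K\|\le1$ and $\|(A_K^TA_K)^{-1}\|=\sigma_{\min}(A_K)^{-2}$ I would then conclude $\|S'((\bar b,\lambda);(0,\alpha))\|\le|\alpha|\,\sqrt{|K|}\,\sigma_{\min}(A_K)^{-2}$. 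Relative to the bound in \Cref{th:LASSOSol}(a), taking $q=0$ removes the $\sigma_{\max}(A_K)$ contribution, and this observation replaces the crude estimate of the residual term by the exact value $\sqrt{|K|}$; that is precisely where the improvement comes from.

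To finish part (a), I would invoke (as in the proof of \Cref{th:LASSOSol}(a)) that the local Lipschitz modulus of $S$ at $\bar\lambda$ equals $L=\limsup_{\lambda\to\bar\lambda}\max_{|\alpha|\le1}\|S'((\bar b,\lambda);(0,\alpha))\|$, and then use that, for $\lambda$ near $\bar\lambda$, $J(S(\bar b,\lambda))\subseteq J(\bar x)=J$ by \cref{eq:J} (local uniqueness and continuity of $S(\bar b,\cdot)$ near $\bar\lambda$ being guaranteed by \Cref{ass:Intermed} and \Cref{th:LASSOSol}(a)). Hence $K\subseteq J$, so $|K|\le|J|$ and $\sigma_{\min}(A_K)\ge\sigma_{\min}(A_J)$ (the columns of $A_K$ forming a subset of those of the full-rank matrix $A_J$), which upgrades the previous estimate to $\|S'((\bar b,\lambda);(0,\alpha))\|\le\sqrt{|J|}\,\sigma_{\min}(A_J)^{-2}$ uniformly near $\bar\lambda$, giving $L\le\sqrt{|J|}\,\sigma_{\min}(A_J)^{-2}$.

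For part (b), \Cref{ass:Strong} forces $I=J$ (\Cref{rmk:strong-asmp-ii-equivalences}), so $K=I$ throughout and the same bound reads $L\le\sqrt{|I|}\,\sigma_{\min}(A_I)^{-2}$; alternatively, one can apply the differentiability statement \Cref{th:LASSOSol}(b) with $q=0$ and bound the operator norm of the resulting linear map $\alpha\mapsto L_I((A_I^TA_I)^{-1}A_I^T\frac{\alpha}{\bar\lambda}(A\bar x-\bar b))$ directly, again using that $\frac{1}{\bar\lambda}A_I^T(A\bar x-\bar b)$ has unit-modulus entries, together with \Cref{lem:ConSupp} to propagate the support identity to nearby $\lambda$. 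I do not expect a real obstacle here — this is a corollary — the only step deserving a little care being the uniformity of the estimate over $\lambda$ near $\bar\lambda$, which is supplied by the inclusion $J(S(\bar b,\lambda))\subseteq J$.
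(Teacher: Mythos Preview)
Your proposal is correct and follows essentially the same route as the paper's proof: restrict the two-variable derivative formula of \Cref{th:LASSOSol}(a) to $q=0$, use $K\subseteq J(S(\bar b,\lambda))\subseteq J$ (via \cref{eq:J}) together with the equicorrelation property to bound $\|A_K^T(AS(\bar b,\lambda)-\bar b)/\lambda\|$ by $\sqrt{|K|}$, then pass through the $\limsup$ characterization of the Lipschitz modulus and the monotonicity $\sigma_{\min}(A_K)\ge\sigma_{\min}(A_J)$; part (b) follows by $I=J$. Your write-up is a bit more explicit than the paper's (you note the entries have modulus exactly $1$, whereas the paper simply writes $\le\sqrt{|K|}$), but the argument is the same.
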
 
\begin{proof} (a) Revisiting the proof of \Cref{th:LASSOSol}(a), for $\lambda$ sufficiently close to $\bar \lambda$ and $\alpha\in \R$ there is $K=K(\lambda,\alpha)\subseteq J$ with $S'(\lambda;\alpha)=L_K\left((A_K^TA_K)^{-1}A_K^T\left(\frac{\alpha}{\bar \lambda}(AS(\lambda)-\bar b)\right)\right)$. Hence, we find that
  \[
    \|S'(\lambda;\alpha)\|\leq  \frac{|\alpha|}{\sigma_{\min}(A_K)^2}\left\| A_K^T\left(\frac{A\bar x-\bar b}{\bar \lambda}\right)\right\|\leq \frac{|\alpha|\sqrt{|K|}}{\sigma_{\min}^2(A_K)}\leq |\alpha|\frac{\sqrt{|J|}}{\sigma_{\min}(A_J)^2}.
  \]
  Therefore 
  \[
    L=\limsup_{\lambda\to\bar \lambda} \max_{|\alpha|=1}\|S'(\lambda;\alpha)\|\leq \frac{\sqrt{|J|}}{\sigma_{\min}(A_J)^2},
  \]
  as desired. Item (b) then follows from (a) with $I = J$. 
\end{proof}

\subsection{\texorpdfstring{On~{\Cref{ass:Strong}} and the sharpness of~{\cref{cor:Lipschitz_lambda}}}{On Assumption 4.5 and the sharpness of Corollary 4.18}}
\label{sec:discussion_Fuchs}

We now discuss a case where~\Cref{ass:Strong} is satisfied and where the
single-valued solution map to the LASSO problem admits an explicit formula. This
example is due to Fuchs~\cite{Fuc 04} and is based on the notion of
\emph{coherence} (see, \eg{}~\cite[Chapter 5]{FoR 13} and references therein). We
assume for simplicity that the columns $A_1,\ldots,A_n$ of $A$ have unit
$\ell_2$-norm and recall that the coherence of $A$ is defined as
$$
\mu(A) := \max_{i\neq j}|\langle A_i, A_j\rangle|.
$$
Beyond providing more insight on~\Cref{ass:Strong}, the following example sheds
light on the sharpness of the Lipschitz bound proved
in~\cref{cor:Lipschitz_lambda}.
\begin{example}[{\cref{ass:Strong}} and sharpness of~{\cref{cor:Lipschitz_lambda}}] 
\label{exa:Fuchs} 
Assume that the columns of $A$ have unit $\ell_2$-norm and suppose that $\bar{b} \in \mathbb{R}^m$ satisfies the following assumption: there exists $x_{0}\in\mathbb{R}^n$ such that
\begin{equation}
    \label{eq:condition_on_b_Fuchs}
    A x_{0} = \bar{b}, \; \|x_{0}\|_0 < \frac12 \left(1 + \frac{1}{\mu(A)}\right)  \AND A_{I_0} \text{ has full rank with } I_0 = \supp(x_0).
\end{equation}
In other words, $Ax_{0}$ is an irreducible sparse representation of $\bar{b}$ with respect to the columns of $A$ and satisfying an explicit upper bound on the sparsity level in terms of the coherence $\mu(A)$. Under assumption \cref{eq:condition_on_b_Fuchs} and using ideas from \cite{FoR 13} it is possible to derive an explicit formula for the solution map of the {LASSO}. This allows us to check \Cref{ass:Strong} and show the sharpness of the Lipschitz bound of~\cref{cor:Lipschitz_lambda}.

Let $s_0 := (x_0)_{I_0}$ and define, for any $\lambda \geq 0$, the vector $\bar{x} \in \mathbb{R}^n$ as
\begin{equation}
  \label{eq:explicit_Fuchs}
  \bar{x}_{I_0} := s_0 - \lambda (A_{I_0}^T A_{I_0})^{-1}\sgn(s_0) \AND \bar{x}_{I_0^C}:=0.
\end{equation}
Note that, for small enough values of $\lambda>0$, $\sgn(s_0) = \sgn(\bar{x}_{I_0})$ since all entries of $s_0$ are nonzero. Hence, the quantity 
$$
\lambda_{\max} := \sup\set{\lambda^+ > 0}{\sgn(\bar{x}_{I_0}) = \sgn(s_0), \;\forall \lambda \in [0,\lambda^+)} \in (0,+\infty]
$$
is well defined and is such that $I_0=\supp(\bar{x})=:I$ for all $\lambda \in [0,\lambda_{\max})$. Moreover, recalling that $\bar{b} = A_{I_0} s_0$, for all $\lambda \in [0,\lambda_{\max})$ we have both
\begin{align}
  \frac1\lambda  A_{I_0}^T(\bar{b}-A\bar{x})
  &= \frac1\lambda  A_{I_0}^T(\bar{b}-A_{I_0}(s_0 - \lambda (A_{I_0}^TA_{I_0})^{-1}\sgn(s_0)))
  = \sgn(s_0) = \sgn(\bar{x}_{I_0}),\label{eq:optimality_Fuchs_1}
  \\
  \left|\frac1\lambda  A_{I_0^C}^T(\bar{b}-A\bar{x})\right| &= \left|A_{I_0^C}^T A_{I_0}^+ \sgn(s_0)\right|<1,   \label{eq:optimality_Fuchs_2}
\end{align}
where the last inequality is a direct consequence of \cref{eq:condition_on_b_Fuchs} and \cite[Theorem~3]{Fuc 04}. Observe that \cref{eq:optimality_Fuchs_1} and \cref{eq:optimality_Fuchs_2} imply that $\bar{x}$ is a solution of the LASSO for any $\lambda \in [0,\lambda_{\max})$. Moreover, \cref{eq:optimality_Fuchs_2}, combined with the fact that $A_{I_0}$ has full rank, yields the validity of  \Cref{ass:Strong}.

We conclude this example by showing the sharpness of the Lipschitz bound
of~\cref{cor:Lipschitz_lambda}. First, observe that the support set $I_0$
depends on $\bar{b}$ (since it determines which columns of $A$ are used to form
the sparse representation of $\bar{b}$). Yet, $I_{0}$ is independent of the
tuning parameter $\lambda$. In fact, the existence of $x_{0}$ (which, in turn,
defines $I_{0}$) in~\cref{eq:condition_on_b_Fuchs} is not related to
$\lambda$. We note, however, that $\lambda_{\max}$ does depend on $P$ and,
hence, on $\bar{b}$. This allows for a direct differentiation of the LASSO
solution map $\lambda \mapsto S(\lambda)$ with respect to $\lambda$ via the
explicit formula~\cref{eq:explicit_Fuchs} and yields, for any
$\bar{\lambda} \in (0,\lambda_{\max})$,
\begin{equation}
\label{eq:Lipschitz_lambda_Fuchs}
\left\|D_{\lambda}S(\bar{\lambda})\right\| 
= \|(A_{I_{0}}^TA_{I_{0}})^{-1}\sgn(s_0)\| 
\leq \|(A_{I_{0}}^TA_{I_{0}})^{-1}\| \|\sgn(s_0)\|
= \frac{\sqrt{|I_{0}|}}{\sigma_{\min}(A_{I_{0}})^2}.
\end{equation}
This estimate is consistent with the Lipschitz bound
of~\cref{cor:Lipschitz_lambda} since $I = I_0$. \hfill $\diamond$
\end{example}

\section{Applications and numerical experiments}
\label{sec:Appli}

In this section, we illustrate how to apply the theory presented
in~\Cref{sec:Solution} to study the sensitivity of the LASSO solution to the
tuning parameter $\lambda$ when $A$ is a random subgaussian matrix and when
$m \ll n$ (in a sense made precise below). As already mentioned in the
introduction, this case study is motivated by compressed sensing applications
\cite{AdH 21, ElK 12, FoR 13, LaW 21, Vid 19}. In this section, we
primarily restrict our focus to the Euclidean space $\reals^{n}$; and in a mild abuse
of notation, we henceforth use $\E$ to denote the expectation operator for a
random object \cite{Ver_17}. We refer to the exponential function alternately
as $e^x$ or $\exp(x)$. If $X$ is a random object
drawn from a distribution $\mathcal{D}$, we write $X \sim \mathcal{D}$. If
$\{X_i\}_{i = 1}^m$ are all independent and identically distributed
according to $\mathcal{D}$ we write $X_i \iid \mathcal{D}$, $i = 1,\ldots,m$. Throughout this section, we will let $C > 0$ denote an absolute constant, whose value may change from one appearance to the next, a presentational choice common in high-dimensional probability and compressed sensing.
Moreover,  for integers $1\leq s\leq n$, we denote $\Sigma_{s}^{n} := \{x \in \reals^{n} : \|x\|_{0} \leq s\}$, where $\|x\|_0:=|\{j\in \{1,\ldots,n\}:x_j\neq 0\}|$. Finally, we denote by $\bar x(\lambda)$ an element of the solution set
$S (\lambda)$ as it is defined in \cref{eq:solution_map_lambda} of
\Cref{cor:Lipschitz_lambda}. 

\subsection{Application to LASSO parameter sensitivity}
\label{sec:appli-lasso1}

We start by recalling some standard notions from high-dimensional
probability. For a general introduction to the topic we refer to,
\eg~\cite{Ver_17, Wai 19}.
\begin{definition}[Subgaussian random variable and vector]
  We call a real-valued random variable $X$ $\beta$-subgaussian, for some
  $\beta > 0$, if $\|X\|_{\psi_{2}} \leq \beta$ where
  \begin{align*}
    \|X\|_{\psi_{2}} := \inf \left\{ t > 0 : \E \left[\exp (X^{2}/t^{2})\right] \leq 2 \right\}. 
  \end{align*}
  A real-valued random vector $Y \in \reals^{n}$ is $\beta$-subgaussian if
  \begin{align*}
    \|Y\|_{\psi_{2}} := \sup_{\|v\| = 1} \|\ip{Y}{v}\|_{\psi_{2}} \leq \beta. 
  \end{align*}
\end{definition}

\begin{definition}[Subgaussian matrix]
  We call $A \in \reals^{m \times n}$ a $\beta$-subgaussian matrix if
  its rows $a_{1}^T, \ldots, a_m^T \in \reals^{1 \times n}$ 
  are independent subgaussian random vectors satisfying
  $\max_{i} \|a_{i}\|_{\psi_{2}} \leq \beta$ and
  $\E a_i a_i^T = I_{n}$ (where $I_{n}$ is the $n\times n$ identity
  matrix). We call $\tilde A \in \reals^{m \times n}$ a normalized
  $\beta$-subgaussian matrix if $\tilde A := m^{-1/2}A$ where
  $A \in \reals^{m\times n}$ is a $\beta$-subgaussian matrix.
\end{definition}

\noindent
The subgaussian random matrix model is popular in compressed sensing and it
encompasses random matrices with independent, identically distributed Gaussian
or Bernoulli entries. For more detail, see~\cite[Chapter 9]{FoR 13}
or~\cite[Chapter 4]{Ver_17}. It is well known that the singular values of
submatrices of subgaussian random matrices are well-behaved. This statement is
formalized as~\cref{lem:submat-least-singvals} in the supplement, and may be established
from~\cite[Corollary~1.2]{Jeo_20} and~\cite[$\S$10.3]{Ver_17}. It is used to
obtain uniform control over the minimum singular values over all $s$-element
sub-matrices $A_{I}$ where $I \subseteq \{1, 2, \ldots, n\}$ with $|I| = s$.

Such control allows us to obtain an upper bound for the Lipschitz constant $L$
in~\cref{cor:Lipschitz_lambda} that is independent of the support set $I$ and
holds with high probability when $A$ is a normalized $\beta$-subgaussian
matrix, provided that the LASSO solution is sparse enough and
that~\Cref{ass:Strong} holds.

\begin{proposition}[Sparse LASSO parameter sensitivity for subgaussian matrices]
  \label{prop:applic_variational_lambda}
  Fix integers $1 \leq s < m \leq n$ and suppose that
  $A \in \reals^{m \times n}$ is a normalized $\beta$-subgaussian
  matrix. Suppose that $\delta, \varepsilon \in (0, 1)$ and
  $m \geq C \delta^{-2} \beta^{2}\log \beta \left[ s \log (e n / s) + \log (3 /
    \varepsilon) \right]$, where $C > 0$ is the absolute constant
  from~\cref{lem:submat-least-singvals}. The following holds with probability at
  least $1 - \varepsilon$ on the realization of $A$. For
  $\bar{x}(\lambda) \in S(\lambda)$ where $S$ is the solution map in
  \cref{eq:solution_map_lambda}, if $\bar{b} \in \reals^{m}$ and
  $\bar{\lambda} > 0$ are such that $\bar{x}(\bar{\lambda})$ satisfies
  \cref{ass:Strong} and $\| \bar{x}(\bar{\lambda}) \|_{0} \leq s$, then there
  exists $r > 0$ such that for all $\lambda$ with
  $|\lambda - \bar{\lambda}| < r$,
  \begin{align*}
    \| \bar{x}(\lambda) - \bar{x}(\bar{\lambda}) \| \leq L | \lambda - \bar{\lambda}|, %
    \qquad L < \frac{\sqrt s}{(1 - \delta)^{2}}. 
  \end{align*}
\end{proposition}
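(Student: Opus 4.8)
The plan is to concatenate the deterministic Lipschitz estimate of \cref{cor:Lipschitz_lambda}(b) with the uniform least-singular-value control for column submatrices of a normalized subgaussian matrix recorded in \cref{lem:submat-least-singvals}. The one conceptual point to get right is the order of quantifiers: the high-probability event must be fixed in terms of the realization of $A$ alone, \emph{before} the data pair $(\bar b,\bar\lambda)$ enters, since the solution $\bar x(\bar\lambda)$ and hence its support depend on $A$. This forces the use of the \emph{uniform} (over all index sets of size at most $s$) singular-value bound rather than a bound for a single fixed submatrix.

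First I would fix $\delta,\varepsilon\in(0,1)$ and, under the stated lower bound on $m$, invoke \cref{lem:submat-least-singvals} to obtain an event $\mathcal{E}$ of probability at least $1-\varepsilon$ on which
\begin{equation*}
  \sigma_{\min}(A_I) > 1-\delta \qquad \text{for every } I\subseteq\{1,\dots,n\} \text{ with } |I|\le s
\end{equation*}
(the hypothesis $s<m$ is what allows these $m\times|I|$ submatrices to have full column rank; the strict inequality is what the lemma yields, or follows from its non-strict form since $\sigma_{\min}(A_I)=1-\delta$ occurs with probability zero). From here on I argue conditionally on $\mathcal{E}$, so $A$ is fixed and everything below is deterministic.

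Now fix any $\bar b\in\reals^m$ and $\bar\lambda>0$ such that $\bar x(\bar\lambda)\in S(\bar\lambda)$ satisfies \cref{ass:Strong} and $\|\bar x(\bar\lambda)\|_0\le s$, and set $I:=\supp(\bar x(\bar\lambda))$, so $|I|\le s$. Since \cref{ass:Strong} holds at $\bar x(\bar\lambda)$, \cref{th:LASSOSol}(b) gives that $S$ in \cref{eq:solution_map_lambda} (with $\bar b$ fixed) is single-valued and continuously differentiable near $\bar\lambda$ — so $\bar x(\lambda)$ really is the unique element of $S(\lambda)$ for $\lambda$ near $\bar\lambda$ — and \cref{cor:Lipschitz_lambda}(b) furnishes $r>0$ and a local Lipschitz constant $L\le \sqrt{|I|}\,\sigma_{\min}(A_I)^{-2}$ with
\begin{equation*}
  \|\bar x(\lambda)-\bar x(\bar\lambda)\| \le L\,|\lambda-\bar\lambda| \qquad \text{for all } |\lambda-\bar\lambda|<r.
\end{equation*}
(Should one wish to be fully explicit, \cref{lem:ConSupp} and continuity make $\supp \bar x(\lambda)$ and $\sgn \bar x(\lambda)_I$ locally constant, so by the derivative formula in \cref{th:LASSOSol}(b) the map $\lambda\mapsto S(\lambda)$ is affine near $\bar\lambda$ with $\|D_\lambda S(\bar\lambda)\| = \|(A_I^TA_I)^{-1}\sgn(\bar x(\bar\lambda)_I)\|\le \sqrt{|I|}\,\sigma_{\min}(A_I)^{-2}$, which may be taken as $L$.)

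Finally, on $\mathcal{E}$ we have $\sigma_{\min}(A_I)^2>(1-\delta)^2$, so using $|I|\le s$,
\begin{equation*}
  L \;\le\; \frac{\sqrt{|I|}}{\sigma_{\min}(A_I)^2} \;<\; \frac{\sqrt s}{(1-\delta)^2},
\end{equation*}
and this final bound no longer depends on $I$ — hence is valid for \emph{every} admissible pair $(\bar b,\bar\lambda)$ on $\mathcal{E}$ — which is the assertion. I do not anticipate a real difficulty beyond the quantifier bookkeeping: once \cref{lem:submat-least-singvals} is applied uniformly, the proof is a short chain through \cref{th:LASSOSol}(b) and \cref{cor:Lipschitz_lambda}(b), the covering/union-bound work needed for uniformity of the singular-value estimate having already been carried out inside \cref{lem:submat-least-singvals}.
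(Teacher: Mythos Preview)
Your proposal is correct and follows essentially the same approach as the paper: invoke \cref{lem:submat-least-singvals} to get the uniform lower bound $\sigma_{\min}(A_I)\geq 1-\delta$ over all $|I|\le s$ with probability at least $1-\varepsilon$, then on that event apply \cref{cor:Lipschitz_lambda}(b) and bound $\sqrt{|I|}/\sigma_{\min}(A_I)^2$ by $\sqrt{s}/(1-\delta)^2$. Your explicit emphasis on fixing the high-probability event \emph{before} choosing $(\bar b,\bar\lambda)$ is exactly the right bookkeeping (the paper handles it implicitly via the $\min_{|K|\le s}$), and the optional detour through \cref{th:LASSOSol}(b) and \cref{lem:ConSupp} is unnecessary but harmless.
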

\begin{proof}[{Proof of~\cref{prop:applic_variational_lambda}}]
  Let $I := \supp(\bar{x}(\bar{\lambda}))$ and note that by assumption we have $|I| \leq
  s$. By~\cref{lem:submat-least-singvals}, if $m$ satisfies the stated lower bound,
  then with probability at least $1 - \varepsilon$ on the realization of $A$ it
  holds that
  \begin{align*}
    \sigma_{\min}(A_{I}) \geq \min_{|K| \leq s} \sigma_{\min}(A_{K}) \geq 1 - \delta. 
  \end{align*}
  Here, the first inequality is trivial (since $I$ belongs to the set of indices
  with at most $s$ entries) and the second inequality follows by the referenced
  result. We restrict to this high-probability event for the remainder of the
  proof. Since $\bar{x}(\bar{\lambda})$ satisfies \cref{ass:Strong}, by
  \cref{cor:Lipschitz_lambda} the solution mapping admits a locally Lipschitz
  localization about $\bar{\lambda}$, meaning that there exist $r, L > 0$ such
  that for all $|\lambda - \bar{\lambda}| < r$ one has, as desired,
  \begin{align*}
    \| \bar{x}(\lambda) - \bar{x}(\bar{\lambda}) \| \leq L | \lambda - \bar{\lambda}|, %
    \qquad %
    L < \frac{\sqrt{|I|}}{\sigma_{\min}(A_{I})^{2}} \leq \frac{\sqrt s}{(1 - \delta)^{2}}.
  \end{align*}
\end{proof}

\Cref{prop:applic_variational_lambda} provides an upper bound to the Lipschitz
constant of the solution map $\lambda \mapsto S(\lambda)$ of the LASSO at
$\bar{\lambda}$ under~\Cref{ass:Strong} and provided that
$\bar{x}(\bar{\lambda})$ is sparse enough. We now show how to remove the
sparsity condition when the measurements are of the form
$\bar{b} = A x_{0} + \noise$ for some (approximately) sparse vector $x_{0}$ and
bounded noise $\noise$ and under a slightly stronger condition on the number of
measurements $m$. To make this possible, a key ingredient is the recent analysis
in~\cite{foucart2023sparsity} that provides explicit bounds to the sparsity of
LASSO minimizers. We also rely again on~\cref{lem:submat-least-singvals}, which
controls the \emph{restricted isometry constants} of $A$ (see,
\eg{}~\cite[Chapter 6]{FoR 13} and references therein). We begin by stating a
specialization of~\cite[Theorem~5]{foucart2023sparsity}.
\begin{lemma}
  \label{lem:foucart-2023-thm5}
  Let $0 < \alpha_{-} \leq 1 \leq \alpha_{+} < \infty$, define
  $\gamma := \alpha_{+} / \alpha_{-}$ and
  $t := \lfloor 36\gamma^{2}s\rfloor +1$. Suppose an unknown signal
  $x_{0} \in \Sigma_{s}^{n}$ is measured as $\bar b = Ax_{0} + h \in \reals^{m}$ for
  unknown noise $h \in \reals^{m}$ satisfying
  $\| h \| \leq \frac{1}{3} \| \bar b \|$ and a measurement matrix
  $A \in \reals^{m \times n}$ satisfying RIP of order $t$ with parameters $(\alpha_{-}, \alpha_{+})$
  (\emph{cf.}~\cref{def:vrip}):
  \begin{align*}
    \alpha_{-} \| x \| \leq \| Ax \|_{2} \leq \alpha_{+} \| x \|, \qquad \forall x \in \Sigma^n_t. 
  \end{align*}
  Then, for any $\lambda >  2 \alpha_{+} \| h \|$, any LASSO
  solution $\bar{x}(\lambda)$ has sparsity at most proportional to $s$, namely
  \begin{align*}
    \| \bar{x}(\lambda) \|_{0} \leq \lfloor 36 \gamma^{2} s\rfloor. 
  \end{align*}
\end{lemma}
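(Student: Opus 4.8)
The plan is to deduce the statement directly from~\cite[Theorem~5]{foucart2023sparsity}, whose general form bounds the sparsity of \emph{any} minimizer of $\frac12\|Ax-\bar b\|^2+\lambda\|x\|_1$ in terms of a restricted-isometry-type hypothesis on $A$, the sparsity level $s$ of the underlying signal, and a lower bound on $\lambda$ relative to the noise magnitude $\|h\|$. The whole task then reduces to bookkeeping: identify the parameters of that theorem with the quantities $\alpha_-,\alpha_+,\gamma=\alpha_+/\alpha_-$, $t=\lfloor 36\gamma^2 s\rfloor+1$, and the threshold $\lambda>2\alpha_+\|h\|$ used here, and check that under these identifications the conclusion becomes $\|\bar x(\lambda)\|_0\le\lfloor 36\gamma^2 s\rfloor=t-1$.

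Concretely, I would first recall the precise statement of~\cite[Theorem~5]{foucart2023sparsity} and normalize conventions, reconciling in particular whether its restricted isometry property is phrased via $\|Ax\|$ or $\|Ax\|^2$ and how its order parameter corresponds to our $t$, so that the hypothesis ``$\alpha_-\|x\|\le\|Ax\|_2\le\alpha_+\|x\|$ for all $x\in\Sigma^n_t$'' is exactly the one invoked. Second, I would verify that the noise and parameter conditions are compatible: the assumption $\|h\|\le\frac13\|\bar b\|$ is precisely what lets one bound the residual $\|A\bar x(\lambda)-\bar b\|$ of a LASSO minimizer back in terms of $\|h\|$, and the threshold $\lambda>2\alpha_+\|h\|$ should coincide with (or, using $\alpha_+\ge 1$, imply) the corresponding threshold in~\cite{foucart2023sparsity}. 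Third, substituting $\gamma=\alpha_+/\alpha_-$ into Foucart's bound and taking the integer part yields exactly $\lfloor 36\gamma^2 s\rfloor$; since this is strictly smaller than $t$, the RIP of order $t$ does apply to a size-$t$ superset of the support of $\bar x(\lambda)$, so the argument is self-consistent.

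For the reader's benefit I would also sketch the mechanism behind the cited theorem: from optimality of $\bar x(\lambda)$ one has $A^T(\bar b-A\bar x(\lambda))\in\lambda\,\partial\|\cdot\|_1(\bar x(\lambda))$, hence $|A_i^T(\bar b-A\bar x(\lambda))|=\lambda$ for every $i\in\supp(\bar x(\lambda))$; comparing the objective at $\bar x(\lambda)$ with the objective at $x_0$ controls $\|A\bar x(\lambda)-\bar b\|$ in terms of $\|h\|$ and $\lambda$; and applying the RIP lower bound to a size-$t$ subset of the support turns the coordinatewise equalities into an inequality of the form $\lambda\sqrt t\le\alpha_+\|A\bar x(\lambda)-\bar b\|$, which together with $\lambda>2\alpha_+\|h\|$ forces $t$ to be no larger than the claimed multiple of $s$ --- a contradiction if $\supp(\bar x(\lambda))$ were larger. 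The main obstacle here is not conceptual but one of faithful transcription: ensuring that the constant $36$, the exponent $2$ on $\gamma$, the factor $2$ in the threshold, and the floor together with the ``$+1$'' in the definition of $t$ are all inherited correctly from~\cite{foucart2023sparsity} under its normalization of RIP and of the regularization parameter.
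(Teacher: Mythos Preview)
Your proposal is correct and matches the paper's approach: the lemma is stated there as a direct specialization of~\cite[Theorem~5]{foucart2023sparsity} with no separate proof, so the task is exactly the parameter identification you describe. Your added sketch of the mechanism behind Foucart's argument is a nice bonus but goes beyond what the paper itself provides.
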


We are now in position to state the main result of this section. 
\begin{proposition}
  \label{prop:applic_variational_lambda_no_sparsity}
  Fix $\delta, \varepsilon \in (0, 1)$, let $1 \leq s < m \leq n$ be integers
  and suppose $x_{0} \in \Sigma_{s}^{n}$. Suppose $\bar{b} := Ax_{0} + h$
  where $A \in \reals^{m \times n}$ is a normalized $\beta$-subgaussian matrix
  and $h \in \reals^{m}$ satisfies $\| h \| \leq \frac{1}{3} \| \bar b \|$. Suppose that
  \begin{align*}
    m \geq 37 C \left( \frac{1 + \delta}{\delta(1-\delta)} \right)^{2} \beta^{2} \log \beta \left[ s \log (en/s) + \log(3 / \varepsilon) \right],
  \end{align*}
  where $C > 0$ is the absolute constant from \cref{lem:submat-least-singvals}. Then, with probability at least $1- \varepsilon$ on the realization of $A$ the following holds. For
  $\bar{x}(\lambda) \in S(\lambda)$ where $S$ is the solution map in
  \cref{eq:solution_map_lambda}, if $\bar{b} \in \reals^{m}$ and
  $\bar{\lambda} > 0$ are such that $\bar{x}(\bar{\lambda})$ satisfies
  \cref{ass:Strong} and if $\bar{\lambda} > 2(1 + \delta)\| h \|$, then there exists $r > 0$ such that for all $\left| \lambda - \bar{\lambda} \right| \leq r$,
  \begin{align*}
    \| \bar{x}(\lambda) - \bar{x}(\bar{\lambda}) \| \leq L \left| \lambda - \bar{\lambda} \right|, \qquad L \leq \frac{6(1 + \delta) \sqrt s}{(1 - \delta)^{3}}. 
  \end{align*}
\end{proposition}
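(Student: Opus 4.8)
The plan is to combine the Lipschitz estimate of \Cref{cor:Lipschitz_lambda}(b) (valid under \Cref{ass:Strong}) with the explicit LASSO sparsity bound of \cref{lem:foucart-2023-thm5} and the uniform singular-value control provided by \cref{lem:submat-least-singvals}. To that end I would put $\alpha_{-} := 1-\delta$ and $\alpha_{+} := 1+\delta$, so that $\gamma := \alpha_{+}/\alpha_{-} = \frac{1+\delta}{1-\delta} \geq 1$, and set $t := \lfloor 36\gamma^{2}s\rfloor + 1$ as in \cref{lem:foucart-2023-thm5} (we may assume $t \leq n$, which is automatic in the regime $s \ll n$ of interest). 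All the probabilistic content is then concentrated in a single claim: with probability at least $1-\varepsilon$ over the realization of $A$, the matrix $A$ satisfies the RIP of order $t$ with parameters $(\alpha_{-},\alpha_{+})$. Everything after that is deterministic.

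For the probabilistic step, I would verify that the assumed lower bound on $m$ dominates the bound that \cref{lem:submat-least-singvals} requires for the order-$t$ RIP with parameters $(1-\delta,1+\delta)$. Since $\gamma^{2}s \geq 1$, one has $t = \lfloor 36\gamma^{2}s\rfloor + 1 \leq 36\gamma^{2}s + 1 \leq 37\gamma^{2}s$; moreover $t > s$ forces $\log(en/t) \leq \log(en/s)$, so $t\log(en/t) \leq 37\gamma^{2}s\log(en/s)$, and adding $\log(3/\varepsilon) \leq 37\gamma^{2}\log(3/\varepsilon)$ gives
\[
t\log(en/t) + \log(3/\varepsilon) \;\leq\; 37\gamma^{2}\left[ s\log(en/s) + \log(3/\varepsilon)\right].
\]
Multiplying by $C\delta^{-2}\beta^{2}\log\beta$ and invoking the identity $\delta^{-2}\gamma^{2} = \left(\frac{1+\delta}{\delta(1-\delta)}\right)^{2}$ then shows that the hypothesis $m \geq 37C\left(\frac{1+\delta}{\delta(1-\delta)}\right)^{2}\beta^{2}\log\beta\,[s\log(en/s)+\log(3/\varepsilon)]$ implies $m \geq C\delta^{-2}\beta^{2}\log\beta\,[t\log(en/t)+\log(3/\varepsilon)]$. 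Hence, on an event of probability at least $1-\varepsilon$, $\sigma_{\min}(A_{K}) \geq 1-\delta$ and $\sigma_{\max}(A_{K}) \leq 1+\delta$ for every $K \subseteq \{1,\dots,n\}$ with $|K| \leq t$; I would restrict to this event for the rest of the argument.

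On this event, fix $\bar b$ and $\bar\lambda$ as in the statement. Because $x_{0} \in \Sigma_{s}^{n}$, $\|h\| \leq \frac{1}{3}\|\bar b\|$, $A$ obeys the RIP of order $t$ with parameters $(\alpha_{-},\alpha_{+})$, and $\bar\lambda > 2(1+\delta)\|h\| = 2\alpha_{+}\|h\|$, \cref{lem:foucart-2023-thm5} applies and yields $\|\bar x(\bar\lambda)\|_{0} \leq \lfloor 36\gamma^{2}s\rfloor < t$. Consequently $I := \supp(\bar x(\bar\lambda))$ satisfies $|I| \leq 36\gamma^{2}s$ and, since $|I| < t$, $\sigma_{\min}(A_{I}) \geq 1-\delta$. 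Since $\bar x(\bar\lambda)$ satisfies \Cref{ass:Strong}, \Cref{cor:Lipschitz_lambda}(b) supplies an $r > 0$ with $\|\bar x(\lambda) - \bar x(\bar\lambda)\| \leq L\,|\lambda-\bar\lambda|$ whenever $|\lambda-\bar\lambda| \leq r$, and
\[
L \;\leq\; \frac{\sqrt{|I|}}{\sigma_{\min}(A_{I})^{2}} \;\leq\; \frac{6\gamma\sqrt s}{(1-\delta)^{2}} \;=\; \frac{6(1+\delta)\sqrt s}{(1-\delta)^{3}},
\]
which is the claimed bound.

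The only genuinely delicate point is the sample-complexity bookkeeping in the middle step, namely tracking how the order-$t$ RIP requirement of \cref{lem:submat-least-singvals} propagates into the factor $37\left(\frac{1+\delta}{\delta(1-\delta)}\right)^{2}$ in the hypothesis on $m$ (the floors, the $\delta^{-2}$ versus $\gamma^2$ trade-off, and the monotonicity of $\log$). Everything else is a straightforward concatenation of \cref{lem:foucart-2023-thm5} and \Cref{cor:Lipschitz_lambda}(b) with elementary estimates.
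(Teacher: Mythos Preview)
Your proposal is correct and follows essentially the same approach as the paper's proof: define $t:=\lfloor 36\gamma^{2}s\rfloor+1$ with $\gamma=(1+\delta)/(1-\delta)$, verify that the assumed lower bound on $m$ dominates the sample complexity from \cref{lem:submat-least-singvals} at order $t$, invoke \cref{lem:foucart-2023-thm5} to bound $|I|$, and finish with \cref{cor:Lipschitz_lambda}(b). Your bookkeeping is in fact slightly more explicit than the paper's (which compresses the $t\log(en/t)$ estimate into a single displayed inequality using $37z\geq\lfloor 36z\rfloor+1$), but the argument is the same.
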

\begin{proof}
  Define
  $t := \lfloor 36 \gamma^{2} s \rfloor + 1$ where
  $\gamma := (1 + \delta)/(1 - \delta)$. The assumed lower bound on $m$, combined with the observation that $37 z \geq \lfloor 36 z \rfloor + 1$ for all $z \geq 1$, gives
  \begin{align*}
    m %
    & \geq 37C \left( \frac{1 + \delta}{\delta(1 - \delta)} \right)^{2} \beta^{2} \log \beta \left[ s \log (en/s) + \log (3/\varepsilon) \right]
    \\
    & \geq C\delta^{-2} \beta^{2} \log \beta \left[ t \log (en/t) + \log (3/\varepsilon) \right],
  \end{align*}
  which is sufficient for $A$ to satisfy RIP of order $t$ with parameters
  $(1-\delta, 1+\delta)$ with probability at least $1 - \varepsilon$ (by
  \cref{lem:submat-least-singvals}).  Restrict to this favorable high-probability
  event and let $I := \supp(\bar{x}(\bar{\lambda}))$. By~\cref{lem:foucart-2023-thm5},  it holds that
  $|I| = \| \bar{x}(\bar{\lambda}) \|_{0} \leq \lfloor 36 \gamma^{2}
  s\rfloor$. Consequently, since $\bar{x}(\bar{\lambda})$ satisfies
  \cref{ass:Strong}, by~\cref{cor:Lipschitz_lambda} the solution map  admits
  a locally Lipschitz localization about $\bar{\lambda}$ meaning that there
  exist $r, L > 0$ such that for all $|\lambda - \bar{\lambda}| < r$ one has
  \begin{align*}
    \| \bar{x}(\lambda) - \bar{x}(\bar{\lambda}) \| \leq L | \lambda - \bar{\lambda}|, \qquad L < \frac{\sqrt{|I|}}{\sigma_{\min}(A_{I})^{2}} \leq \frac{6(1 + \delta) \sqrt s}{(1 - \delta)^{3}}. 
  \end{align*}
\end{proof}

\begin{remark}[From sparsity to compressibility]
  Observe that the above result extends to $s$-compressible signals, \ie{} signals $x_{0} \in \reals^{n}$ for which, informally, the best $s$-term approximation error $\sigma_{s}(x_{0})_{1} := \inf \{ \|x_{0} - x\|_{1} : x \in \mathbb{R}^n,  \, \|x\|_0 \leq s\}$ is small. This can be seen by letting
$\bar b := Ax_{s} + \noise'$,  where $x_{s}$ is a best $s$-term approximation to $x_{0}$ with respect to the $\ell_{1}$-norm, i.e.\ an $s$-sparse vector such that $\|x_{0}-x_s\|_1 = \sigma_s(x_{0})_1$,  and $\noise' := A(x_{0} - x_{s}) + \noise$. We note, however, that this would require the knowledge of an upper bound to $\|A(x_{0}-x_s)\|$ in order to satisfy the assumption 
$\|\noise'\|\leq \noisescale$.\hfill$\diamond$
\end{remark}

\subsection{Numerical experiments}
\label{sec:numerical}

We conclude this section by illustrating some numerical experiments. Suppose
that $x_{0} \in \reals^{n}$ is $s$-sparse and let
$\bar b := Ax_{0} + \gamma w$ where $A \in \reals^{m \times n}$ has
$A_{ij} \iid \mathcal{N}(0, 1/m)$, $\gamma = 0.1$ and
$w_{i} \iid \mathcal{N}(0, 1)$. In particular, note that $A$ is a normalized $\beta$-subgaussian matrix for an absolute constant $\beta > 0$~\cite[Example~2.5.8 \& Lemma~3.4.2]{Ver_17}. For $\lambda > 0$ recall that
$\bar{x}(\lambda) \in S(\lambda)$ where $S$ is defined
in~\cref{eq:solution_map_lambda}, and define the best parameter choice with respect to the ground-truth:
\begin{align}
\label{eq:lambda-star}
\lambda^{*} := \inf \argmin_{\lambda > 0} \|\bar{x}(\lambda) - x_{0}\|.
\end{align}
Above, the particular choice of $\bar x(\lambda)$ is to be understood in the sense that a numerical algorithm returns a unique vector for given data, even when the solution map is set-valued.
Finally, let $I := \supp(\bar{x}(\lambda^{*}))$ and $s_{\lambda^{*}} := |I|$. In
\autoref{fig:numerics} we plot $\|\bar{x}(\lambda) - \bar{x}(\lambda^{*})\|$ as
a function of $\lambda$ (solid curve) and superpose the Lipschitz upper bound
evaluated at $\bar \lambda = \lambda^{*}$, namely
$\sqrt{s_{\lambda^{*}}} \cdot\sigma_{\min}^{-2}(A_{I})\cdot |\lambda -
\lambda^{*}|$ (dash-dot curve). Included on each plot, in view of \cref{cor:Lipschitz_lambda}(b), is the ratio of the two
quantities,
\begin{align*}
  \frac{\sqrt{s_{\lambda^{*}}} \cdot\sigma_{\min}^{-2}(A_{I})\cdot |\lambda -
\lambda^{*}|}{\|\bar{x}(\lambda) - \bar{x}(\lambda^{*})\|},
\end{align*}
providing an alternative visualization of the extent of the bound's tightness in
each setting (dotted curve). This latter curve is plotted with respect to the
axis appearing on the right-hand side of each plot.

\begin{figure}[ht]
  \centering

  $s = 3$
  
  \null\hfill\includegraphics[width=.23\textwidth]{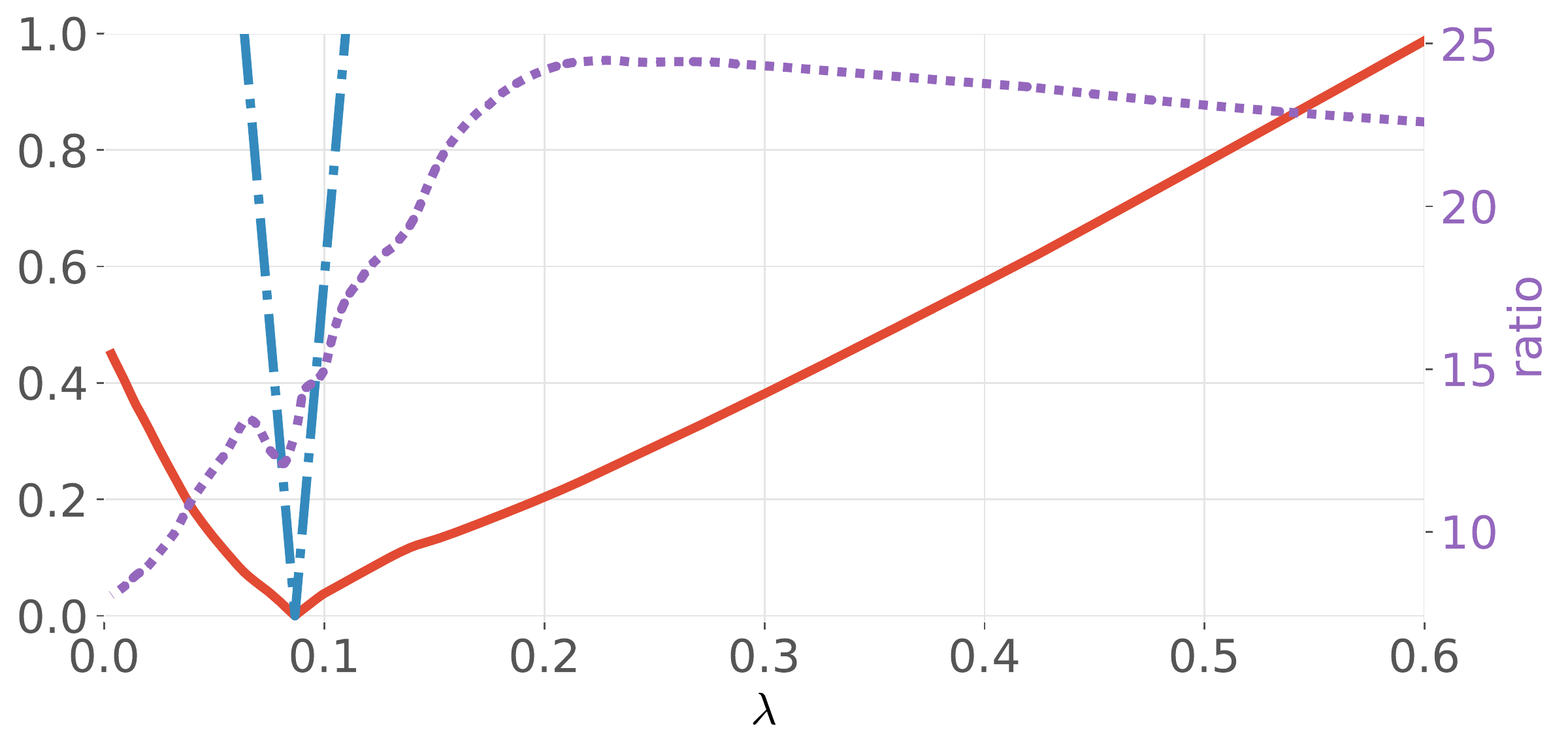}
  \hfill\includegraphics[width=.23\textwidth]{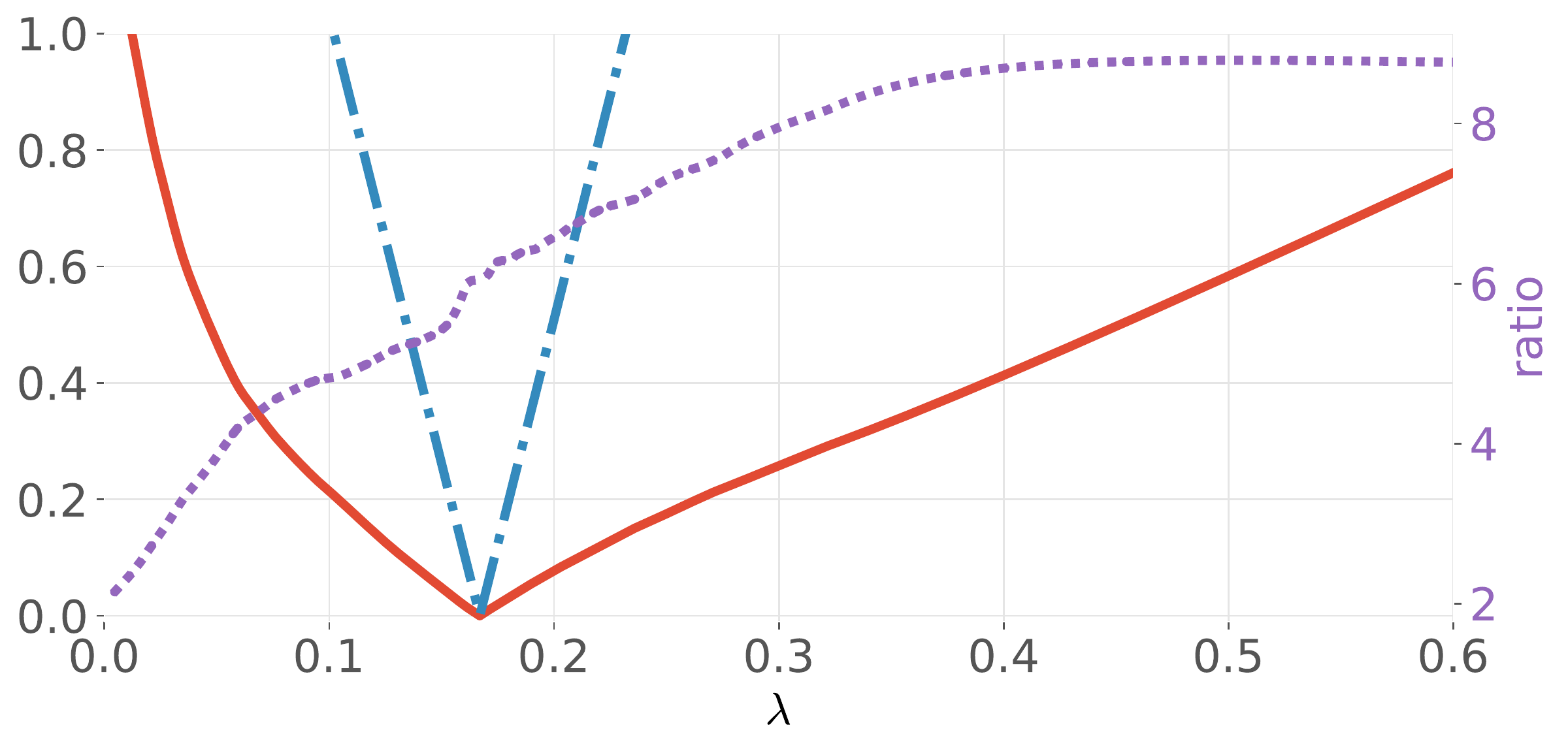}
  \hfill\includegraphics[width=.23\textwidth]{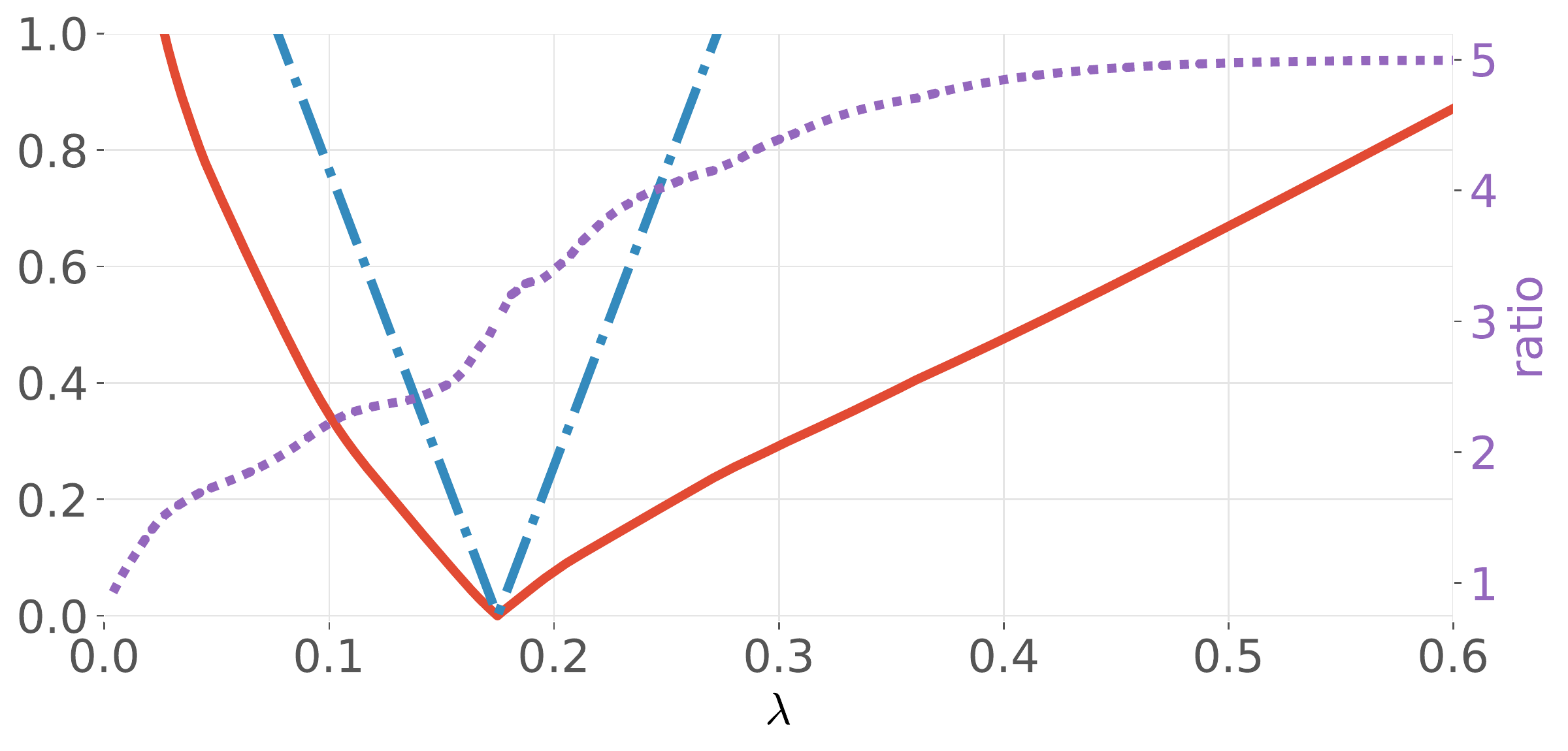}
  \hfill\includegraphics[width=.23\textwidth]{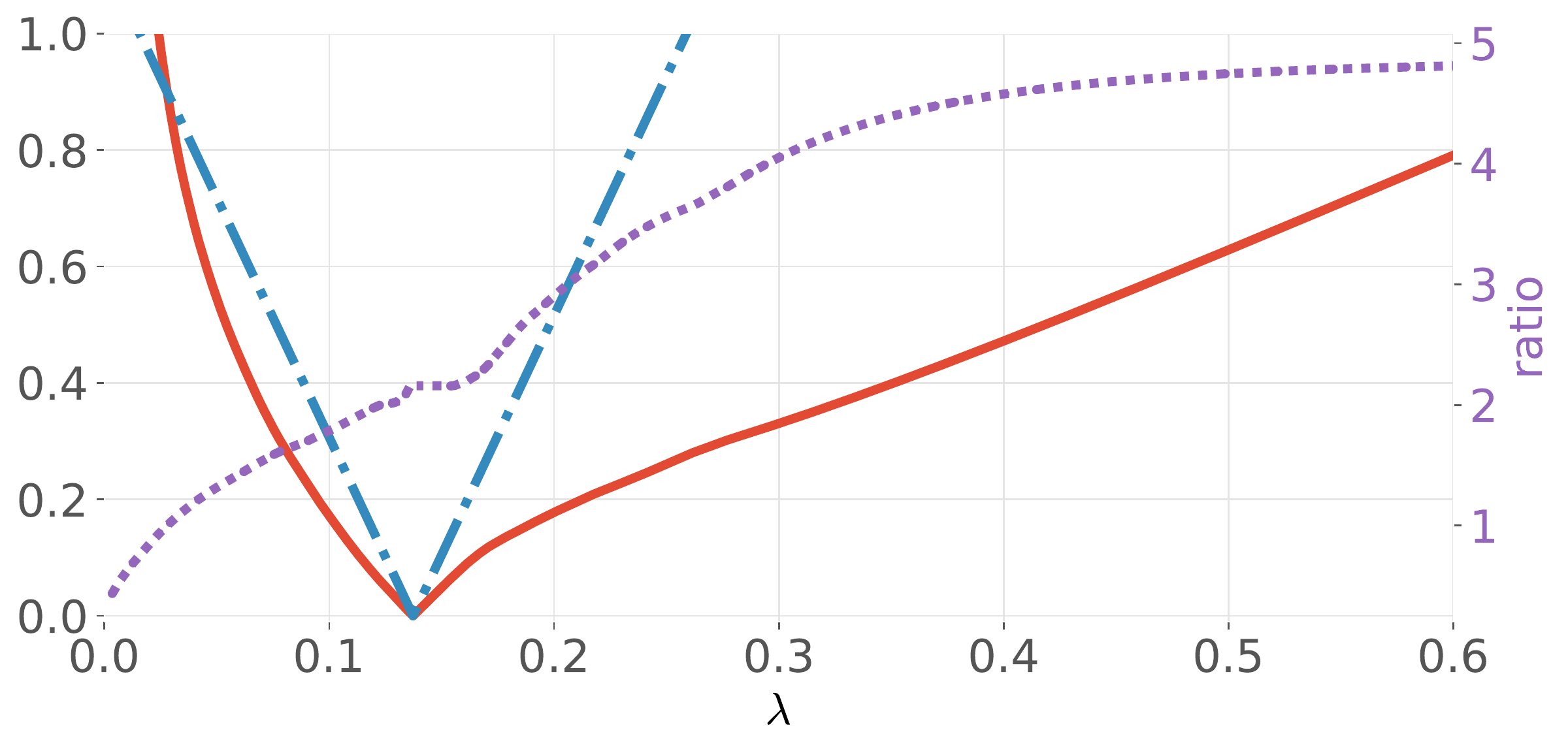}\hfill\null

  $s = 7$
  
  \null\hfill\includegraphics[width=.23\textwidth]{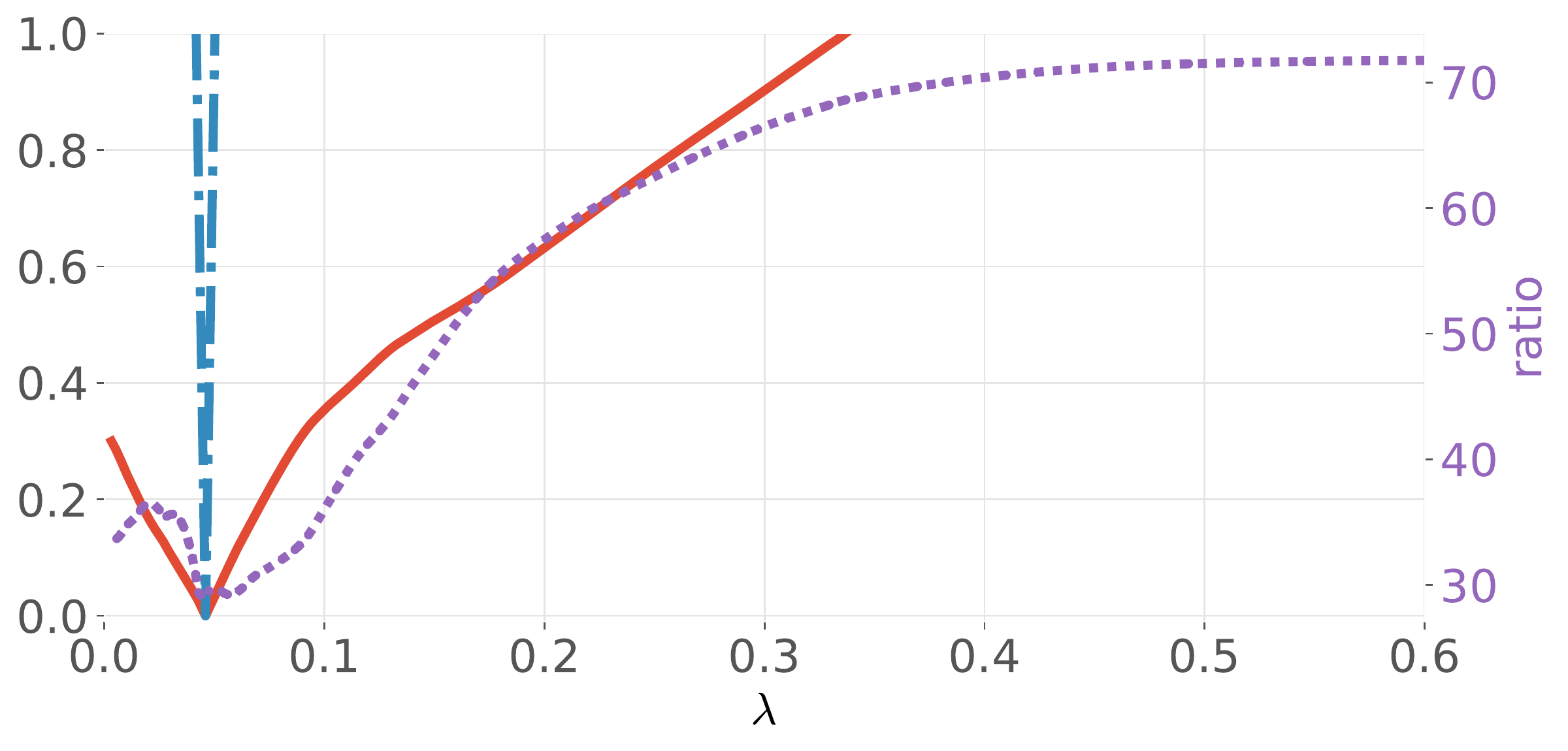}
  \hfill\includegraphics[width=.23\textwidth]{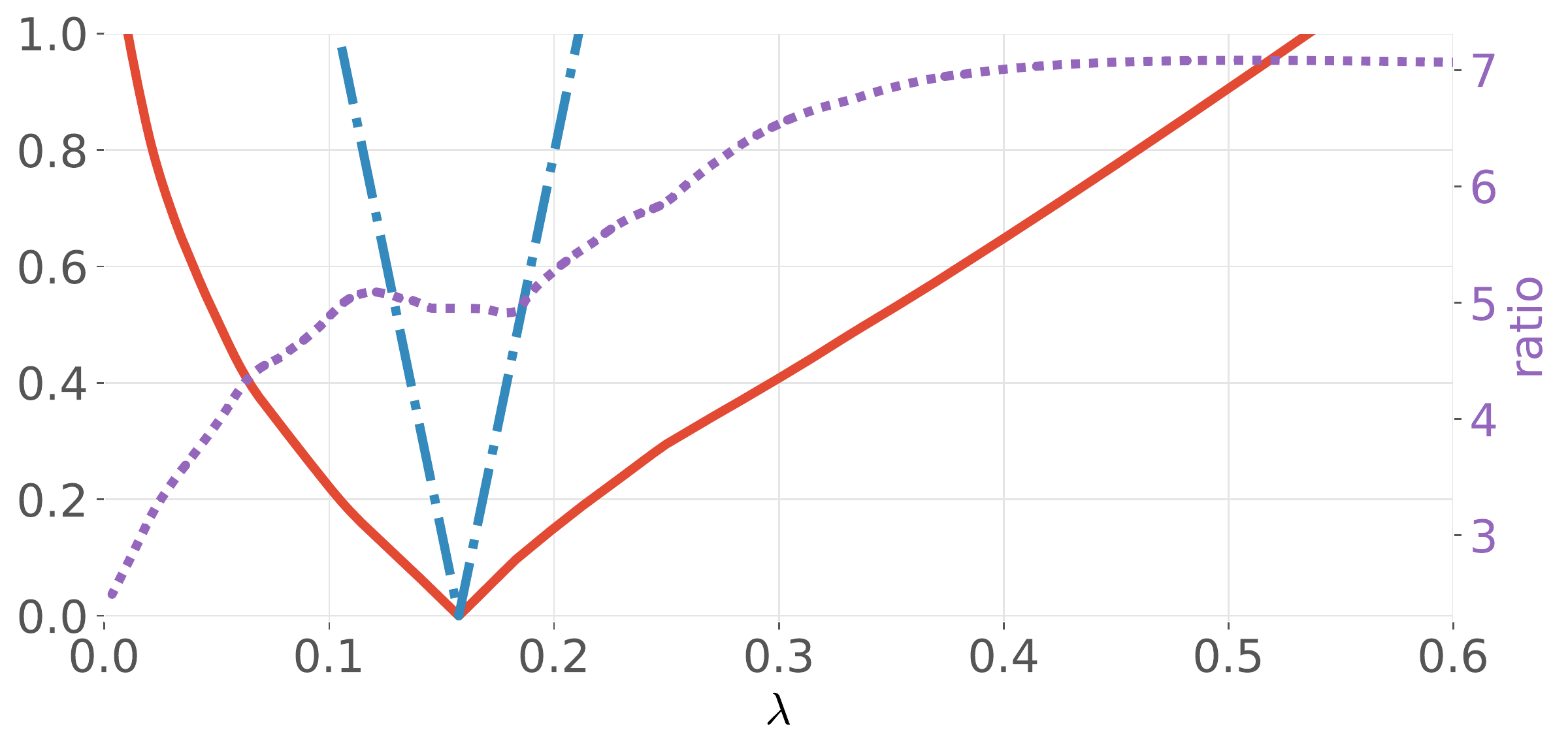}
  \hfill\includegraphics[width=.23\textwidth]{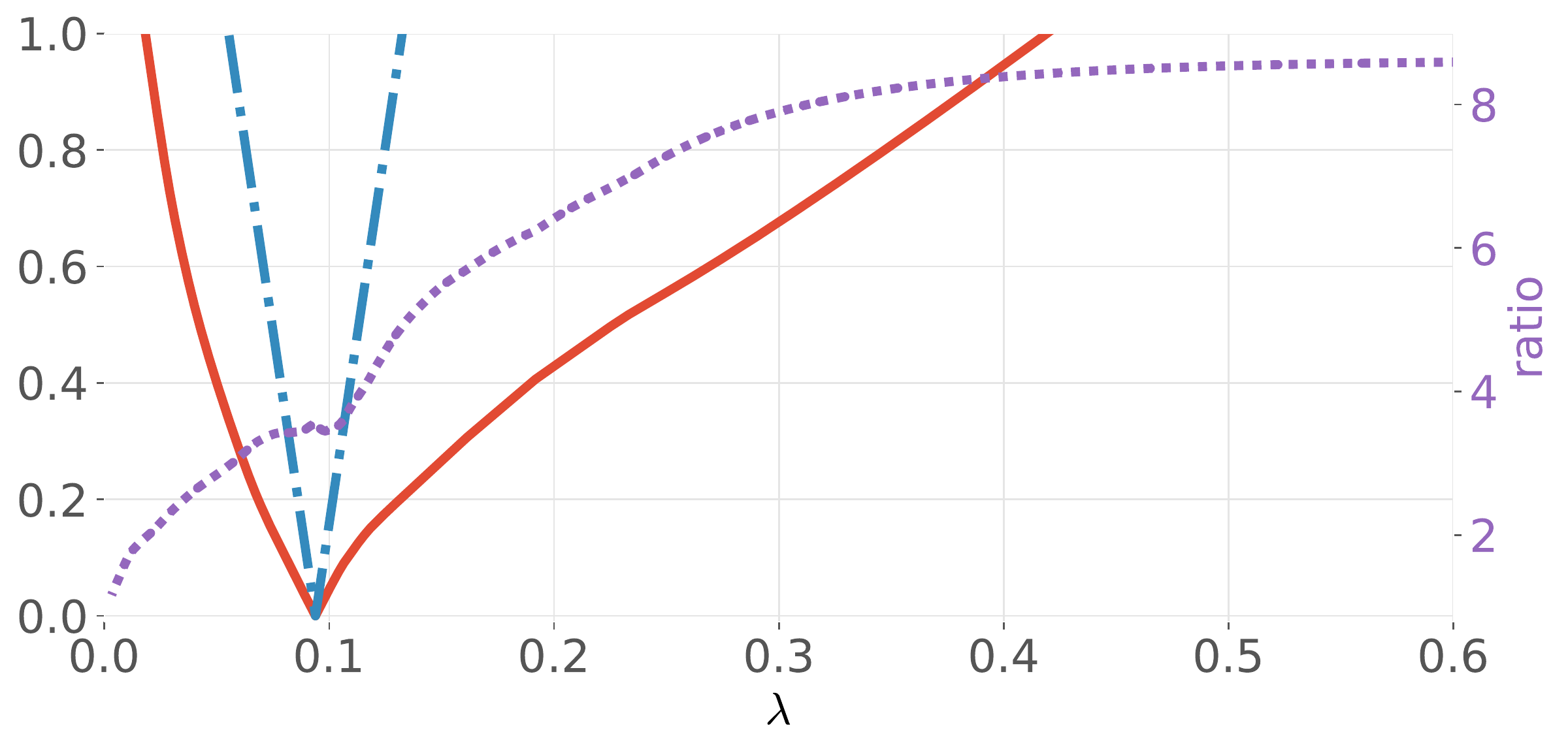}
  \hfill\includegraphics[width=.23\textwidth]{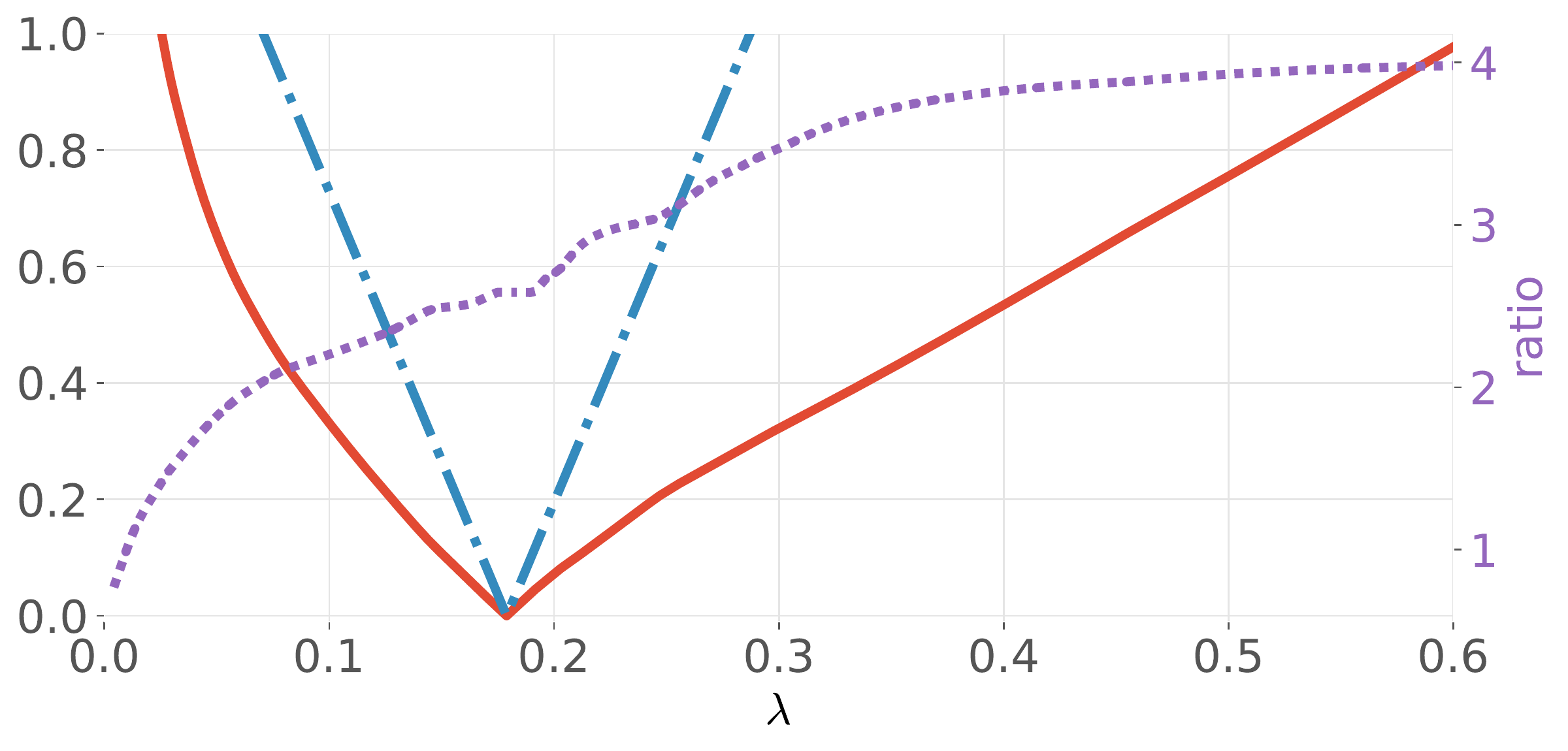}\hfill\null

  $s=15$
  
  \null\hfill\includegraphics[width=.23\textwidth]{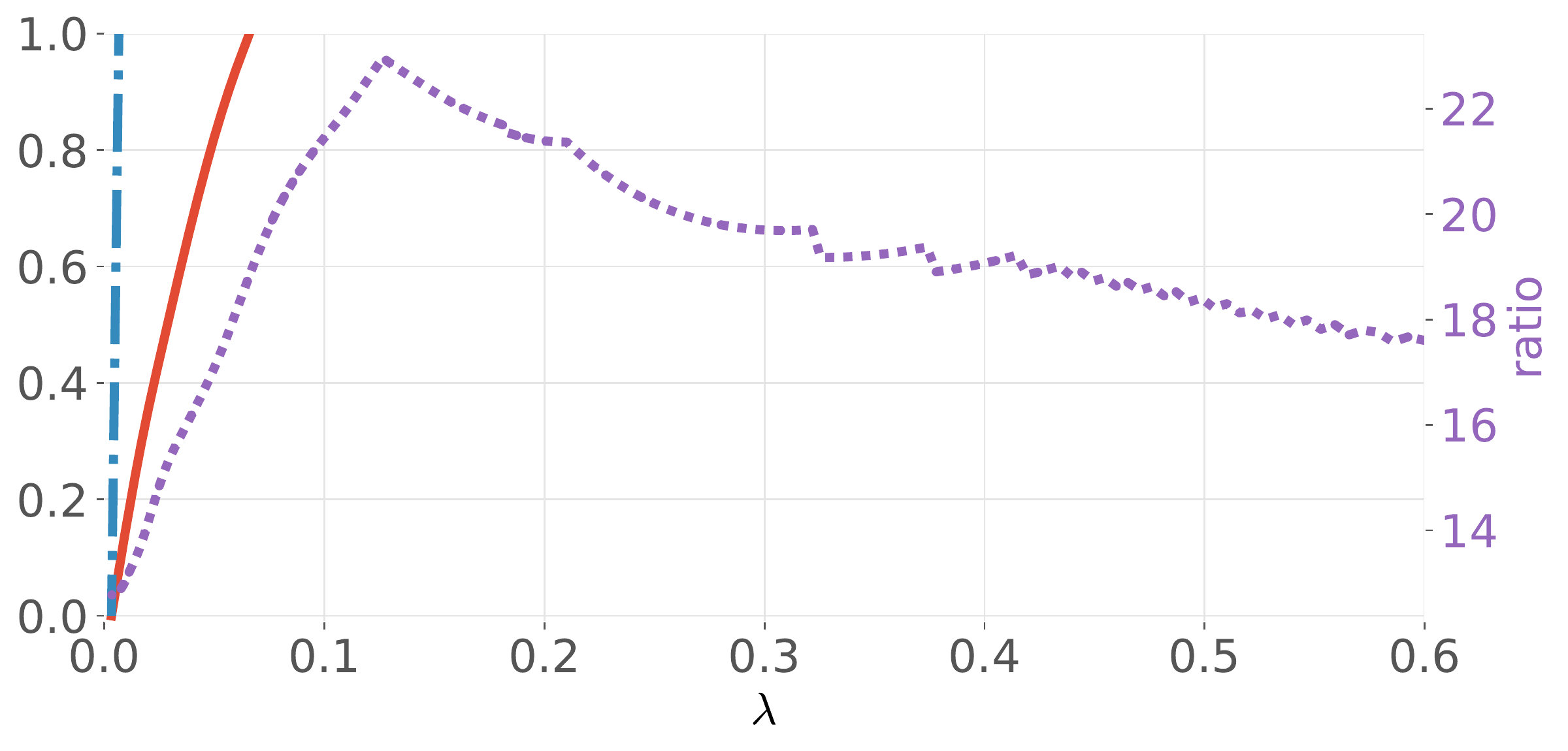}
  \hfill\includegraphics[width=.23\textwidth]{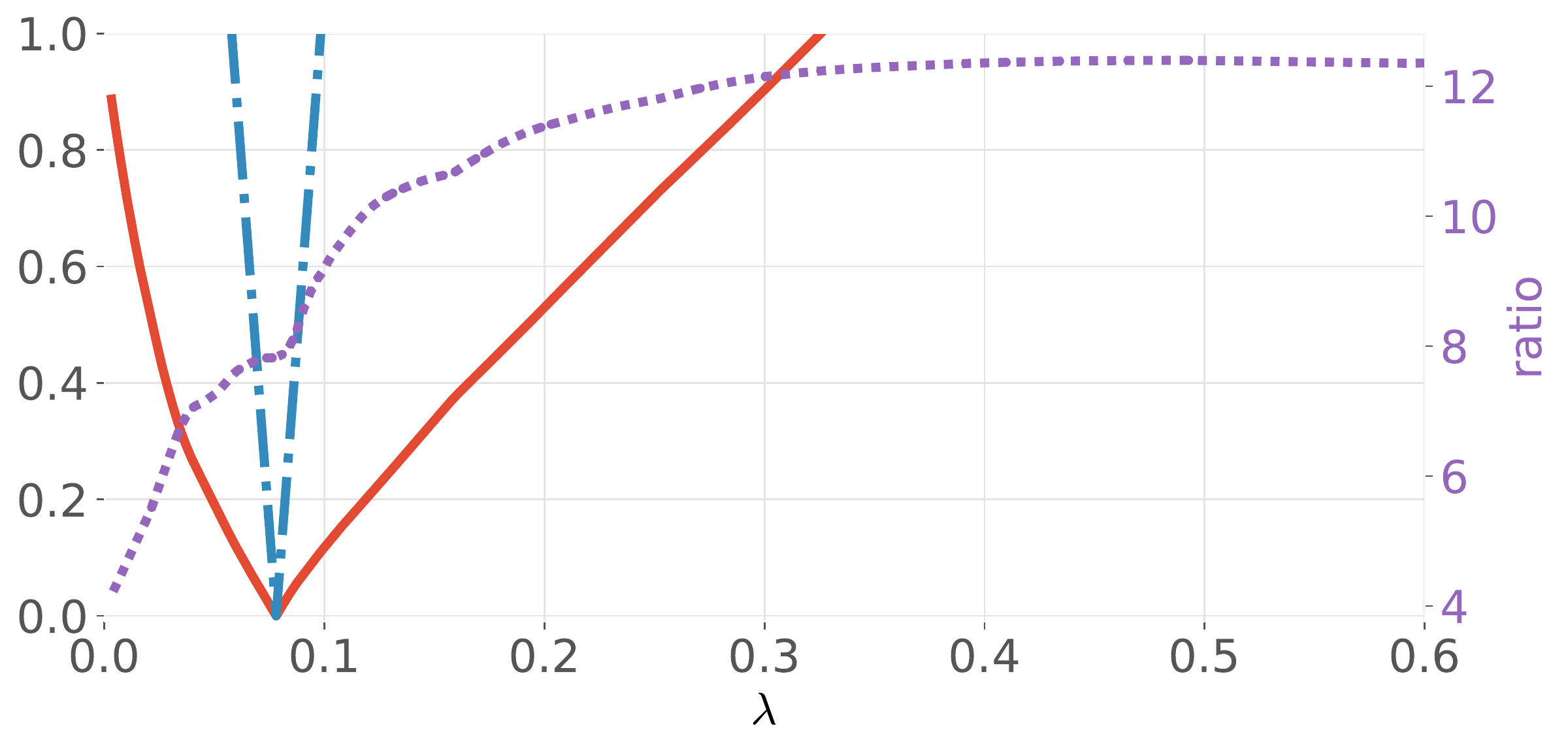}
  \hfill\includegraphics[width=.23\textwidth]{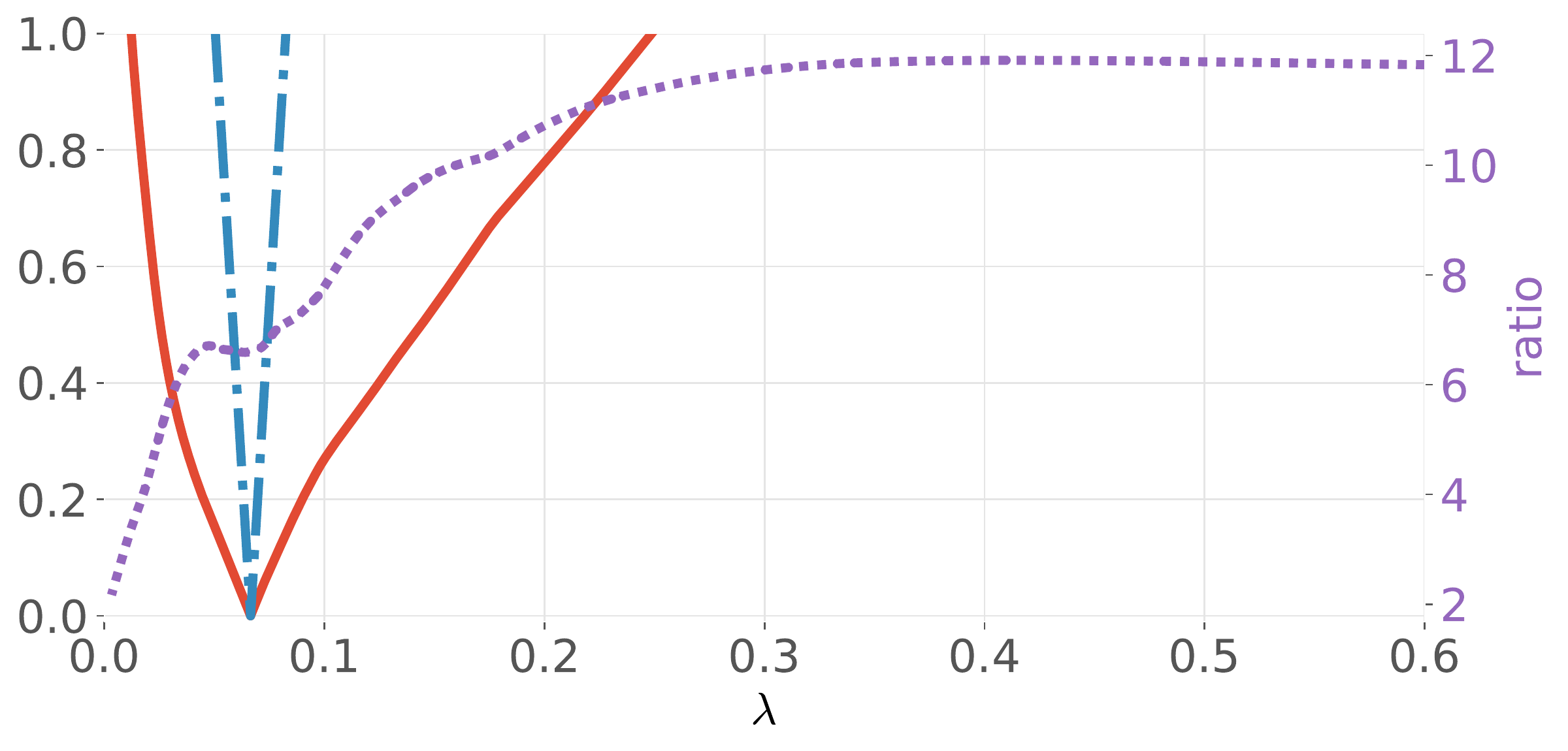}
  \hfill\includegraphics[width=.23\textwidth]{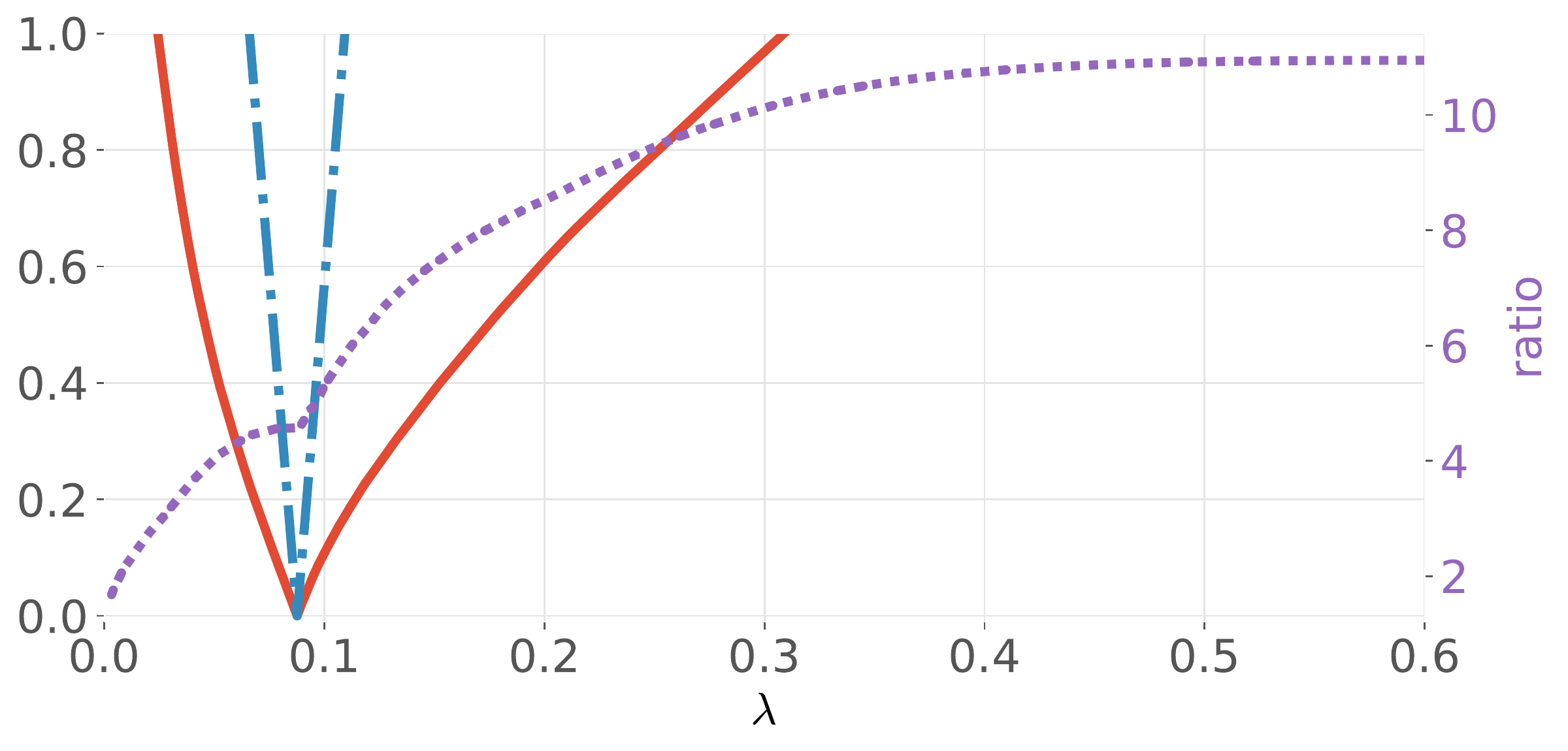}\hfill\null

  \caption{Lipschitzness of the solution mapping for $\lambda$ about
    $\bar \lambda := \lambda^{*}$ as defined in \cref{eq:lambda-star}. The red curve plots
    $\|\bar{x}(\lambda) - \bar{x}(\bar \lambda)\|$; the blue curve,
    $|\lambda - \bar \lambda| \cdot \sqrt{s}/\sigma_{\min}(A_{I})^{2}$. The
    ratio of the two is given by the purple curve, whose $y$-axis is on the
    right side of each plot. From left to right, each column corresponds to
    $m = 50, 100, 150, 200$, respectively. See \autoref{tab:numerics} for
    parameter settings and variable values. }
  \label{fig:numerics}
\end{figure}

The synthetic experiments were conducted for $s = 3, 7, 15$ (corresponding to
each row in the figure, respectively, top-to-bottom) and $m = 50, 100, 150, 200$ (corresponding
to each column, respectively, left-to-right) with $N = 200$. We selected
$\gamma = 0.1$ and for each $j \in \supp(x_{0}) := \{1, \ldots, s\}$, $(x_{0})_{j} = m + \sqrt m W_{j}$ where
$W_{j} \iid \mathcal{N}(0, 1)$. Note that for each choice of $(s, m)$, $\lambda^*$ was chosen as the empirically best choice of tuning parameter from a logarithmically spaced grid of 501 $\lambda$ values approximately centered about a nearly asymptotically order-optimal choice $\gamma \sqrt{2\log n}$. The LASSO program was solved in Python using \texttt{lasso\_path} from scikit-learn~\cite{sklearn}. 

In~\cref{tab:numerics} we report parameter values and relevant quantities
associated with~\cref{fig:numerics}. In particular, the upper bound on the
Lipschitz constant is given by
$L := \sqrt{s_{\lambda^{*}}} \cdot \sigma_{\min}^{-2}(A_{I})$. In essence, the quantity in the penultimate column determines whether~\cref{ass:Strong} holds (\nb{} $A$ is a Gaussian
random matrix, so $A_{I}$ has full rank almost surely when $|I| \leq m$). For all values of $s$
and $m$ in the experiment, $\|A_{I^{C}} (\bar b - A_{I}\bar x_{I})\|_{\infty} < \lambda^{*}$ (though we only present data up to three significant digits in~\cref{tab:numerics}). The only violation of \Cref{ass:Strong} was for $(s, m) = (15, 50)$, for which $s_{\lambda^*} = 63 > m$ meaning $A_I$ was not of  full column rank, violating condition (i). Successful recovery failed in this case, as discussed below.
\begin{table}[ht]
  \centering
  \begin{tabular}{rrrrrrcll}
    \toprule
    $s$ & $m$ & $N$ & $\eta$ & $s_{\lambda^{*}}$ & $L$\hphantom{00} & $\sigma_{\min}(A_{I})$ & $\|A_{I^{C}} (\bar b - A_{I}\bar x_{I})\|_{\infty}$ & $\lambda^{*}$ \\
    \midrule
    3 &  50 & 200 &  0.1 &  25 & 43.5\hphantom{0}    & 0.339            &  0.0831 & 0.0866 \\
    3 & 100 & 200 &  0.1 &  22 & 15.4\hphantom{0}    & 0.552            &   0.162 & 0.167 \\
    3 & 150 & 200 &  0.1 &  20 & 10.2\hphantom{0}    & 0.661            &   0.175 & 0.175 \\
    3 & 200 & 200 &  0.1 &  20 &  8.23               & 0.737            &   0.136 & 0.137 \\
    \midrule
    7 &  50 & 200 &  0.1 &  41 &  236\hphantom{.0}   & 0.165            &  0.0458 & 0.046 \\
    7 & 100 & 200 &  0.1 &  26 &   18.7\hphantom{0}  & 0.522            &  0.155 & 0.158 \\
    7 & 150 & 200 &  0.1 &  43 &   25.9\hphantom{0}  & 0.503            &  0.0931 & 0.0939 \\
    7 & 200 & 200 &  0.1 &  23 &    9.24             & 0.72\hphantom{0} &  0.175 & 0.179 \\
    \midrule
    15 &  50 & 200 &  0.1 & 63 &  301\hphantom{.0}   & 0.162            & 0.00322 & 0.00326 \\
    15 & 100 & 200 &  0.1 & 47 &   49.4              & 0.372            & 0.078   & 0.0781 \\
    15 & 150 & 200 &  0.1 & 66 &   62.5              & 0.361            & 0.0655  & 0.0665 \\
    15 & 200 & 200 &  0.1 & 67 &   46.2              & 0.421            & 0.085   & 0.0876 \\
    \bottomrule
  \end{tabular}
  
  \caption{Parameter settings corresponding with~\autoref{fig:numerics}. The
    assumption \newline
    $\|A_{I^{C}} (\bar b - A_{I}\bar x_{I})\|_{\infty} < \lambda^{*}$ is
    satisfied for all entries where $\lambda^*$ is defined as in
    \cref{eq:lambda-star} (\nb{} the two entries in the third row only
    \emph{appear} equal due to rounding error).}
  \label{tab:numerics}
\end{table}

In every panel of the figure, the dash-dot curve is indeed a strict (local) upper bound on $\|\bar x(\lambda) - \bar x (\lambda^*)\|$, supporting our theory. As predicted by the theory, the error $\|\bar x(\lambda) - \bar x (\lambda^*)\|$ is more pronounced when the problem is \emph{under-regularized} ($\lambda < \lambda^*$), and when the support size grows. Note that the $(s, m) = (15, 50)$ plot (lower-left panel in the figure)
corresponds to an unsuccessful approximate recovery of the ground truth signal
$x_{0}$, due to the relatively large sparsity as compared to the number of
measurements. In this setting, the small value of $\sigma_{\min}(A_{I})$ and the
apparently poor behaviour of $\|\bar x(\lambda) - \bar x(\lambda^{*})\|$ is
consistent with our theory. Similarly, the $(s, m) = (7, 50)$ plot (middle-left
panel in the figure) corresponds with a small value of $\sigma_{\min}(A_{I})$
due to the relatively small value of $m$ (and incidentally, it corresponds with
poor recovery of the ground truth). Again, the relatively poor behaviour of
$\|\bar x(\lambda) - \bar x(\lambda^{*})\|$ (as compared, say, with
$m \geq 100$) is consistent with our theory.

\section{Final remarks}\label{sec:Final}

\noindent
In this paper we studied the optimal value and the optimal solution function of the LASSO problem as a function of both the right-hand side (or vector of measurements) $b\in \R^m$ and the regularization (or tuning) parameter $\lambda>0$. Our analysis of the optimal value function is based on classical convex analysis, while the study of the optimal solution function is based on modern  variational analysis (in particular, differentiation of set-valued maps). As a by-product we established the (strong) metric regularity of the subdifferential of the objective function at a solution. The assumptions needed to perform this analysis were inspired by uniqueness results in the literature, and are shown to hold, \eg{} in the case where the right-hand side at the point in question admits an irreducible sparse representation with respect to the columns of the measurement matrix.  We then combined  these variational-analytic 
findings with random matrix-theoretic arguments to study the sensitivity of the LASSO solution with respect to the tuning parameter, providing upper bounds for the corresponding Lipschitz constant that hold with high probability for measurement matrices of subgaussian type.

Several questions arise naturally as a topic of future research. Can analogous statements to \Cref{th:LASSOSol} be proved for alternative formulations of $\ell_1$ minimization such as the constrained LASSO, quadratically-constrained basis pursuit, or the square-root LASSO? Similarly, the extension of our analysis to convex regularizers beyond the $\ell_1$-norm is an interesting open issue. In particular, can the analysis carried out here be generalized to the matrix setting with the nuclear norm in place of the $\ell_1$-norm?  This will certainly require a good handle on the graph of the subdifferential of the nuclear norm. Finally,~\cref{exa:Fuchs} suggests that the Lipschitz bound with respect to $\lambda$ provided by~\cref{cor:Lipschitz_lambda} might be hard to improve in general. Regarding applications to compressed sensing, another open problem is understanding whether our analysis could lead to results pertaining to the robustness to noise in the measurements. 

\section*{Acknowledgements} The authors would like to thank two anonymous referees for constructive comments that considerably improved the quality of the paper. Moreover, they are grateful to Ben Adcock for providing feedback on an earlier version of the manuscript and to Quentin Bertrand and Tony Silveti-Falls for interesting discussions.

\appendix
\section{\texorpdfstring{Supplement to~{\Cref{sec:Solution}}}{Supplement to Section 4}}
\label{SM:Sol}

In this section provide  some skipped proofs and further details on the results for the solution map of
{LASSO}. 

We commence with a fact that was employed to conclude the proof of
\Cref{lem:ConSupp}. It was implicitly used that, given $z=(x,\|x\|_1)$, then
there is a one-to-one correspondence between the \emph{active set}
$\mathcal A(z)$ for $z\in \Omega:=\epi\|\cdot\|_1$ and $\supp(x)$. To this end,
we realize that we can write the polyhedron $\Omega$ in the form
\[
  \Omega=\set{(x,\alpha)\in \R^{n+1}}{\binom{a^\nu}{-1}^T\binom{x}{\alpha}\leq 0\; (\nu\in \Lambda)}
\]
for some finite index set $\Lambda$, such that the vectors $a^\nu$ are distinct and $a^\nu_i\in\{\pm 1\}$. Then $\nu$ is (by definition) an active index at $z$ if and only if $(a^\nu)^Tx=\|x\|_1$ which is the case if and only if \[
  a_{\supp(x)}^\nu=\sgn(x_{\supp(x)}).
\]
This shows the desired correspondence.

Next, we provide detailed calculations for~\cref{rmk:A-as-parameter}, in which
$A$ is treated as a parameter of the solution map.

\begin{remark}[The matrix $A$ as a parameter]
  \label{rmk:4-10-supp}
  \cref{th:LASSOSol} can be extended to the case where the solution is
  considered as a function of $(A,b, \lambda)$ by using analogous
  arguments. First, we consider the extended function
  $f : (\mathbb{R}^{m\times n} \times \mathbb{R}^m \times \mathbb{R}_{++})
  \times \mathbb{R}^n \to \mathbb{R}^n$ defined by
  $f(A,b,\lambda,x)=\frac{1}{\lambda}\left(A^T(Ax-b)\right)$, where, in this
  case, $\mathbb{R}^{m \times n}$ is equipped with the Frobenius norm
  $\|\cdot\|_F$. Similar to the proof of \cref{th:LASSOSol}, we write
  $f((A,b,\lambda),x) = f(A,b,\lambda,x)$ and make an analogous abuse of
  notation for functions depending on both the parameters $(A,b,\lambda)$ and
  the variable $x$. First, a direct computation (\eg{}
  see~\cite{petersen2008matrix}) shows that
  \begin{align*}
    D_Af(\bar{A},\bar{b},\bar{\lambda},\bar{x}) H %
    = \bar{\lambda}^{-1}((\bar{A}^T H + H^T \bar{A})\bar{x} - H^T \bar{b}),
  \end{align*}
  for any $H \in \mathbb{R}^{m \times n}$. We now show how to generalize
  the proof of \cref{th:LASSOSol}~(a) by highlighting where its arguments need
  some modifications. The proof of the local Lipschitz continuity of the
  solution map
  $ S:(A, b ,\lambda) \mapsto \argmin_{x\in\mathbb{R}^n} \frac12 \|Ax-b\|^2 +
  \lambda \|x\|_1 $ is identical. Defining
  $G(A,b,\lambda,x) = f(A,b,\lambda,x) + F(x)$ and applying
  \cref{prop:Implicit} and \cref{lem:Sum}, we see that
  $DS(\bar A, \bar b, \bar \lambda, \bar x)$ is single-valued and locally
  Lipschitz with
  \begin{align*}
    DS(\bar A, \bar b,\bar \lambda)(H, q,\alpha)=\set{w\in \R^n}{0\in DG(\bar A, \bar b,\bar \lambda,\bar x|0)(H, q,\alpha,w)},
  \end{align*}
  and where, thanks to \cref{lem:Sum},
  \begin{align*}
    DG(\bar A, \bar b,\bar \lambda,\bar x| 0)(H, q,\alpha,w)
    & = \frac{1}{\bar \lambda}\left(\xi -{\bar A}^T\left(q+\frac{\alpha}{\bar \lambda}(\bar A\bar x-\bar b)-\bar Aw\right)\right)\\
    & + D(\p\|\cdot\|_1)(\bar x|-{\bar A}^T(\bar A\bar x-\bar b))(w),
  \end{align*}
  where $\xi %
  = \xi(\bar A, \bar b, \bar x, H) := (\bar A^T H + H^T \bar A)\bar x - H^T \bar b \in \mathbb{R}^n$. Arguing as in the proof of \cref{th:LASSOSol}, we obtain
  \begin{align*}
    0 &\in DG(\bar A, \bar b,\bar \lambda,\bar x|0)(H, q,\alpha,w)
    \\
      & \implies %
        \begin{cases}
          w_{I^C_<}=0
          \\
          \bar A_I^T\left(q+\frac{\alpha}{\bar \lambda}(\bar A\bar x-\bar b)-\bar Aw\right) + \xi_I= 0
          \\
          w_i\left(\bar A_i^T\left(q+\frac{\alpha}{\bar \lambda}(\bar A\bar x-\bar b)-\bar Aw\right) + \xi_i\right)= 0, \; \forall i\in I_{=}^C.
        \end{cases}
  \end{align*}
  This, in turn, leads to the formula
  \begin{align*}
    S'(\bar A, \bar b, \bar \lambda)(H,q,\alpha) 
    = L_K\left(({\bar A}_K^T \bar A_K)^{-1} %
    \left(%
    {\bar A}_K^T q +\frac{\alpha}{\bar \lambda} {\bar A}^T_K(\bar A \bar x - \bar b) + \xi_K\right)
    \right). 
  \end{align*}
  Arguing again as in the proof of \cref{th:LASSOSol}, observing that the
  function $\sigma_{\max}(\cdot)$ is continuous, and choosing a sequence
  $(A_k,b_k,\lambda_k) \to (A,b,\lambda)$ such that $(A_k)_K$ has full column
  rank for every $k\in \mathbb{N}$ (by passing to a subsequence if
  necessary), we see that there exists $(\bar H, \bar q, \bar \lambda) \in \mathbb{B}$
  such that
  \begin{align*}
    L
    & \leq \frac{1}{\sigma_{\min}(\bar A_K)^2} \left\|\bar A_K^T \bar q+\frac{\bar \alpha}{\bar \lambda}\bar A_K^T(\bar A\bar x-\bar b) - (\bar A_K^T \bar H + \bar H_K^T \bar A)\bar x + \bar H_K^T \bar b\right\|\\
    & \leq \frac{1}{\sigma_{\min}(\bar A_K)^2} \left( \sigma_{\max}(\bar A_K) + \left\|\frac{\bar A_J^T(\bar A\bar x-\bar b)}{\bar \lambda}\right\| + \max_{\|H\|_F \leq 1} \left\|\bar A_K^T H \bar x + H_K^T (\bar A\bar x - \bar b)\right\|\right)\\
    & \leq \frac{1}{\sigma_{\min}(\bar A_K)^2} \left( \sigma_{\max}(\bar A_K) + \left\|\frac{\bar A_J^T(\bar A\bar x-\bar b)}{\bar \lambda}\right\| + \sigma_{\max}(\bar A_K)  \|\bar x\|+ \|\bar A\bar x - \bar b\|\right)\\
    & \leq \frac{1}{\sigma_{\min}(\bar A_J)^2} \left( \sigma_{\max}(\bar A_J) (1 + \|\bar x\|) + 
      \left\|\frac{\bar A_J^T(\bar A\bar x-\bar b)}{\bar \lambda}\right\| + \|\bar A\bar x - \bar b\|\right)
  \end{align*}
  The extension of \cref{th:LASSOSol}~(b) follows by analogous arguments, with
  $J = I$.\hfill$\diamond$
\end{remark}

\section{\texorpdfstring{Supplement to~{\Cref{sec:Appli}}}{Supplement to Section 5}}

We include here technical background from high-dimensional probability, which is
required to
prove~\cref{prop:applic_variational_lambda,prop:applic_variational_lambda_no_sparsity}. We
state the result~\cref{lem:submat-least-singvals} in a form similar
to~\cite[Theorem~9.1.1]{Ver_17} and moreover using the optimal dependence on
$\beta$ established in~\cite{Jeo_20}. For integers $1 \leq s \leq n$,
recall the definition of $\Sigma_{s}^{n} := \{x \in \reals^{n} : \|x\|_{0} \leq s\}$. In addition, let $T_{n,s}:= \sqrt s \mathbb{B}_{1} \cap \mathbb{B}_{2}$ and
$J_{n, s} := \co (\Sigma_{s}^{n} \cap \mathbb{B}_{2})$. Throughout we
let $\mathcal{C} \in \{ J_{n,s}, T_{n,s}\}$.

\begin{definition}[Restricted Isometry Property]
  \label{def:vrip}
  Let $0 < \alpha_{-} \leq 1 \leq \alpha_{+} < \infty$ and $s \in \nats$. A
  matrix $A \in \reals^{m \times n}$ satisfies the Restricted Isometry Property
  (RIP) of order $s$ with parameters $(\alpha_{-}, \alpha_{+})$ if
  \begin{align*}
    \alpha_{-} \| x \| \leq \| Ax \| \leq \alpha_{+} \| x \|,
    \qquad\forall x \in \Sigma_{s}^{n}. 
  \end{align*}
\end{definition}

\subsection{Matrix deviation inequalities}
\label{sec:concentration-inequalities}


Next we introduce two auxiliary results that will be useful in
establishing our results. The first is~\cite[Corollary~1.2]{Jeo_20}. 

\begin{lemma}[{\cite[Corollary~1.2]{Jeo_20}}]
  \label{lem:jeo20coro12}
  Let $A \in \reals^{m \times n}$ be a $\beta$-subgaussian matrix and let $T \subseteq \reals^{n}$ be a bounded set. Then
  \begin{align*}
    \E \sup_{x \in T} \left| \| Ax \| - \sqrt m \| x \| \right| \leq C \beta \sqrt{\log \beta} \left[ \gmw(T) + \rad(T) \right],
  \end{align*}
  and for any $u \geq 0$, with probability at least $1 - 3 e^{-u^{2}}$,
  \begin{align*}
    \sup_{x \in T} \left|  \| Ax \| - \sqrt m \| x \| \right| \leq C\beta \sqrt{\log \beta} \left[ \gmw(T) + u \cdot \rad(T) \right]. 
  \end{align*}
  Here, $C > 0$ is an absolute constant. 
\end{lemma}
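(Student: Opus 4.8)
The plan is to prove this matrix deviation inequality by the generic-chaining route, following the treatment of the Gaussian case in~\cite[Ch.~9]{Ver_17} and incorporating the refinement of~\cite{Jeo_20} that produces the optimal $\sqrt{\log\beta}$ factor. It suffices to establish the tail bound, since integrating its tail (the $u\cdot\rad(T)$ term governs the decay) recovers the bound in expectation; alternatively one proves the expectation bound first and upgrades it via a concentration-of-the-supremum argument. Fix an anchor point $x_0\in T$ and split
\[
  \sup_{x\in T}\bigl|\,\|Ax\|-\sqrt m\,\|x\|\,\bigr|
  \;\le\;\bigl|\,\|Ax_0\|-\sqrt m\,\|x_0\|\,\bigr|
  +\sup_{x\in T}\bigl|Z_x-Z_{x_0}\bigr|,\qquad Z_x:=\|Ax\|-\sqrt m\,\|x\|.
\]
The first term is a single-vector estimate and accounts for the $\rad(T)$ contribution; the second is a supremum of an increment process, handled by chaining and accounting for the $\gmw(T)$ contribution.

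For the single-vector estimate, write $\|Ax_0\|^2=\sum_{i=1}^m\langle a_i,x_0\rangle^2$, a sum of independent random variables each with mean $\|x_0\|^2$ by isotropy ($\E a_ia_i^T=I_n$) and sub-exponential norm $\lesssim\beta^2\|x_0\|^2$. Bernstein's inequality gives concentration of $\|Ax_0\|^2$ about $m\|x_0\|^2$, and the identity $\|Ax_0\|-\sqrt m\,\|x_0\|=(\|Ax_0\|^2-m\|x_0\|^2)/(\|Ax_0\|+\sqrt m\,\|x_0\|)$ converts this into sub-gaussian control of $Z_{x_0}$; a naive application of Bernstein only yields parameter $O(\beta^2\|x_0\|)$, and sharpening the $\beta$-dependence here to $O(\beta\sqrt{\log\beta}\,\|x_0\|)$ is part of the refinement discussed below.

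For the increment process, the core claim is the sub-gaussian increment bound
\[
  \|Z_x-Z_y\|_{\psi_2}\;\le\;C\beta\sqrt{\log\beta}\,\|x-y\|,\qquad\forall\,x,y\in T.
\]
Granting this, Talagrand's majorizing-measure theorem (or Dudley's inequality) bounds $\E\sup_{x\in T}|Z_x-Z_{x_0}|\lesssim\beta\sqrt{\log\beta}\,\gamma_2(T,\|\cdot\|)$, and since the $\gamma_2$-functional of a subset of Euclidean space is comparable to its Gaussian width, the $\gmw(T)$ term follows; the tail version of the chaining bound yields the high-probability statement, and combining it with the single-vector estimate for $x_0$ closes the argument. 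The main obstacle is proving the increment bound with the \emph{optimal} constant. Bounding $|\,\|Ax\|-\|Ay\|\,|\le\|A(x-y)\|$ and reapplying the single-vector estimate to $z:=x-y$ only delivers a factor $\beta^2$, and one must additionally control the bias $\E\|Az\|-\sqrt m\,\|z\|$, which is not identically zero. Upgrading $\beta^2$ to $\beta\sqrt{\log\beta}$ is exactly the content of~\cite{Jeo_20}: one truncates each coordinate $\langle a_i,z\rangle$ at a level of order $\beta\sqrt{\log\beta}\,\|z\|$, controls the concentration of the truncated quadratic form by a Bernstein-type inequality whose parameters are governed by that truncation level, and shows that the contribution of the rare large coordinates is negligible in $\psi_2$-norm on average. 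I expect this truncation-and-peeling step to be the delicate part; the chaining and the assembly of the three pieces are then routine.
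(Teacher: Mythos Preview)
The paper does not prove this lemma: it is stated verbatim as a citation of \cite[Corollary~1.2]{Jeo_20} and used as a black box in the proof of \cref{lem:sparse-subgaussian-deviations}. There is therefore no ``paper's own proof'' to compare against.

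Your sketch is a faithful outline of how the cited result is established in \cite{Jeo_20} (building on \cite[Ch.~9]{Ver_17}): anchor-plus-increments decomposition, sub-gaussian increment bound for $Z_x=\|Ax\|-\sqrt m\,\|x\|$, generic chaining to convert the increment bound into a $\gamma_2\asymp\gmw$ estimate, and the truncation argument of \cite{Jeo_20} to sharpen the dependence from $\beta^2$ to $\beta\sqrt{\log\beta}$. For the purposes of this paper, however, none of that is needed; the appropriate ``proof'' here is simply the reference.
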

Above, $\rad T := \sup_{x \in T} \|x\|$ and $\gmw(T)$ is the Gaussian mean width of a set $T \subseteq \reals^{n}$:
\begin{align*}
  \gmw(T) := \E \sup_{x \in T} \ip{x}{g}, \qquad \text{where} \; g_i \iid \mathcal{N}(0, 1), \quad i =1,\ldots,n. 
\end{align*}
Observe that $\gmw(T) = \gmw(\conv T)$, where $\conv T$ denotes the convex hull
of $T$. We now summarize~\cite[Exercises~10.3.8--9]{Ver_17} in the
following lemma.

\begin{lemma}[{\cite[Exercises~10.3.8--9]{Ver_17}}]
  \label{lem:ver17ex10389}
  For $C \geq c > 0$ being absolute constants,
  \begin{align*}
    c \sqrt{s \log(2n/s)} \leq \gmw(J_{n,s}) \leq \gmw(T_{n,s}) \leq 2\gmw(J_{n,s}) \leq C \sqrt{s \log(en/s)}. 
  \end{align*}
\end{lemma}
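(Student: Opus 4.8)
The plan is to collapse the whole chain onto the identity $\gmw(J_{n,s}) = \E\,\max_{|I|=s}\|g_I\|_2$ together with two set inclusions. The two \emph{middle} inequalities come from inclusions between $J_{n,s}$ and $T_{n,s}$. On one hand, $\Sigma_s^n\cap\mathbb{B}_2\subseteq J_{n,s}$ by definition of the convex hull, and Cauchy--Schwarz gives $\|x\|_1\le\sqrt{\|x\|_0}\,\|x\|_2\le\sqrt s$ for every $x\in\Sigma_s^n\cap\mathbb{B}_2$, so $\Sigma_s^n\cap\mathbb{B}_2\subseteq\sqrt s\,\mathbb{B}_1\cap\mathbb{B}_2 = T_{n,s}$; since $T_{n,s}$ is convex this forces $J_{n,s}\subseteq T_{n,s}$, and monotonicity of $\gmw(\cdot)$ under inclusion yields $\gmw(J_{n,s})\le\gmw(T_{n,s})$. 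On the other hand, I would prove the standard \emph{sparse convex hull} inclusion $T_{n,s}\subseteq 2J_{n,s}$: given $x\in T_{n,s}$, reorder the coordinates so that $|x_1|\ge|x_2|\ge\cdots$ and partition $\{1,\dots,n\}$ into consecutive blocks $B_1,B_2,\dots$ of size $s$ (the last possibly shorter). Then $\|x_{B_1}\|_2\le\|x\|_2\le 1$, while for $j\ge 2$ each entry of $x_{B_j}$ has modulus at most $\|x_{B_{j-1}}\|_1/s$, so $\|x_{B_j}\|_2\le\|x_{B_{j-1}}\|_1/\sqrt s$ and hence $\sum_{j\ge 2}\|x_{B_j}\|_2\le\|x\|_1/\sqrt s\le 1$. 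Writing $x = \sum_j\|x_{B_j}\|_2\,u_j$ with $u_j := x_{B_j}/\|x_{B_j}\|_2\in\Sigma_s^n\cap\mathbb{B}_2$, and using $0\in\Sigma_s^n\cap\mathbb{B}_2$ to pad the (nonnegative) coefficients to total mass $2$, exhibits $x\in 2\conv(\Sigma_s^n\cap\mathbb{B}_2) = 2J_{n,s}$. Therefore $\gmw(T_{n,s})\le\gmw(2J_{n,s}) = 2\gmw(J_{n,s})$.

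Next, since the supremum of a linear functional over a set equals that over its convex hull, $\gmw(J_{n,s}) = \E\,\sup_{x\in\Sigma_s^n\cap\mathbb{B}_2}\ip{x}{g} = \E\,\max_{|I|=s}\|g_I\|_2$, the expected Euclidean norm of the $s$ largest-magnitude entries of a standard Gaussian vector. For the upper bound $2\gmw(J_{n,s})\le C\sqrt{s\log(en/s)}$ I would union-bound over the $\binom{n}{s}\le(en/s)^s$ index sets: for fixed $I$, $\|g_I\|_2^2$ is $\chi^2$ with $s$ degrees of freedom, so by the Laurent--Massart deviation inequality $\|g_I\|_2\le\sqrt s+\sqrt{2t}$ with probability at least $1-e^{-t}$; taking $t = \log\binom{n}{s}+u$, union-bounding, and integrating over $u\ge 0$ gives $\E\,\max_{|I|=s}\|g_I\|_2\lesssim\sqrt s+\sqrt{\log\binom{n}{s}}\lesssim\sqrt{s\log(en/s)}$, using $\log\binom{n}{s}\le s\log(en/s)$ and $\log(en/s)\ge 1$.

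The main obstacle is the logarithmic factor in the lower bound $\gmw(J_{n,s})\ge c\sqrt{s\log(2n/s)}$; plain $\sqrt s$ growth is free from $\gmw(J_{n,s})\ge\E\|g_{\{1,\dots,s\}}\|_2\gtrsim\sqrt s$, and this already handles the regime $s\ge n/K$ for an absolute constant $K$, where $\log(2n/s)\le\log(2K)$ is bounded. For $s<n/K$ I would use $\gmw(J_{n,s}) = \E\|g_{I^{*}}\|_2\ge\E[\sqrt s\,|g|_{(s)}]$, where $g_{I^{*}}$ collects the $s$ largest-magnitude entries of $g$, $|g|_{(s)}$ denotes the $s$-th largest value, and the inequality is $\|g_{I^{*}}\|_2^2 = \sum_{i=1}^s|g|_{(i)}^2\ge s\,|g|_{(s)}^2$. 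Choosing $\tau\asymp\sqrt{\log(n/s)}$ so that $\Pr(|g_1|>\tau)\ge 2s/n$ --- possible by the Gaussian tail lower bound $\Pr(|g_1|>\tau)\ge c\,\tau^{-1}e^{-\tau^2/2}$ once $n/s$ exceeds an absolute constant --- the count $N := \#\{i:|g_i|>\tau\}$ is Binomial with mean at least $2s$, so a Chernoff bound gives $\Pr(N\ge s)\ge 1-e^{-s/4}\ge c_0>0$; on that event $|g|_{(s)}\ge\tau$, hence $\E|g|_{(s)}\ge c_0\tau\asymp\sqrt{\log(n/s)}$ and $\gmw(J_{n,s})\gtrsim\sqrt{s\log(n/s)}\ge c\sqrt{s\log(2n/s)}$. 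Reconciling the two regimes fixes the absolute constant $c$, and together with the two preceding steps this closes the chain.
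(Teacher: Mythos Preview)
The paper does not actually prove this lemma; it simply records it as the content of Exercises~10.3.8--9 in Vershynin's textbook and cites that reference. Your proposal supplies a correct and complete proof along what is precisely the intended route for those exercises: the inclusion $J_{n,s}\subseteq T_{n,s}$ via Cauchy--Schwarz, the reverse inclusion $T_{n,s}\subseteq 2J_{n,s}$ via the standard sorted-block decomposition, the identification $\gmw(J_{n,s})=\E\max_{|I|=s}\|g_I\|_2$, the upper bound by a union bound plus $\chi^2$ concentration, and the lower bound by Gaussian order statistics. There is nothing to compare against on the paper's side beyond the bare citation, and your argument is sound.
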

The following lemma specializes~\cite[Corollary~1.2]{Jeo_20}
using~\cite[Exercises~10.3.8--9]{Ver_17}.
\begin{lemma}[Sparse subgaussian deviations]
  \label{lem:sparse-subgaussian-deviations}
  Let $A \in \reals^{m \times n}$ be a $\beta$-subgaussian matrix and let
  $\mathcal{C}$ be either $J_{n,s}$ or $T_{n,s}$. For an appropriate choice of
  absolute constant $C >0$, it holds that
  \begin{align*}
    \E \sup_{x \in \mathcal{C}} \left| \| Ax \| - \sqrt m \| x \| \right| \leq C \beta \sqrt{\log \beta} \left[ \sqrt{s \log (e n/s)} + 1 \right]
  \end{align*}
  and for any $u \geq 0$, with probability at least $1 - 3e^{-u^{2}}$,
  \begin{align*}
    \sup_{x \in \mathcal{C}} \left| \| Ax \| - \sqrt m \| x \| \right| \leq C \beta \sqrt{\log \beta} \left[ \sqrt{s \log (e n /s)} + u \right].
  \end{align*}
\end{lemma}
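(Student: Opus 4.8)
The plan is to obtain this statement as a direct specialization of \cref{lem:jeo20coro12} to the sets $\mathcal{C} \in \{J_{n,s}, T_{n,s}\}$, after controlling the two geometric quantities $\rad(\mathcal{C})$ and $\gmw(\mathcal{C})$ that appear on its right-hand side. First I would check that both candidate sets are bounded, so that \cref{lem:jeo20coro12} applies with $T = \mathcal{C}$: indeed $T_{n,s} = \sqrt{s}\,\mathbb{B}_{1} \cap \mathbb{B}_{2} \subseteq \mathbb{B}_{2}$ trivially, while $J_{n,s} = \co(\Sigma_{s}^{n} \cap \mathbb{B}_{2}) \subseteq \mathbb{B}_{2}$ because $\mathbb{B}_{2}$ is convex and contains $\Sigma_{s}^{n} \cap \mathbb{B}_{2}$. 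In particular $\rad(\mathcal{C}) = \sup_{x \in \mathcal{C}} \|x\| \leq 1$ for either choice of $\mathcal{C}$.

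Next I would bound the Gaussian mean width. By \cref{lem:ver17ex10389}, $\gmw(J_{n,s}) \leq \gmw(T_{n,s}) \leq C\sqrt{s \log(en/s)}$, so in both cases $\gmw(\mathcal{C}) \leq C\sqrt{s\log(en/s)}$ for an absolute constant $C > 0$. It is worth recalling here that $\gmw$ is unchanged under passing to the convex hull, which is why the $\co$ in the definition of $J_{n,s}$ does not affect this estimate.

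Finally I would substitute $\rad(\mathcal{C}) \leq 1$ and $\gmw(\mathcal{C}) \leq C\sqrt{s\log(en/s)}$ into the two conclusions of \cref{lem:jeo20coro12}; the term $u \cdot \rad(T)$ in the high-probability bound simplifies to $u$, and absorbing the various absolute constants into a single relabelled $C > 0$ then yields exactly the stated bound on $\E \sup_{x \in \mathcal{C}} |\,\|Ax\| - \sqrt{m}\|x\|\,|$ and, for any $u \geq 0$, the stated bound on $\sup_{x \in \mathcal{C}} |\,\|Ax\| - \sqrt{m}\|x\|\,|$ holding with probability at least $1 - 3e^{-u^{2}}$. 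There is essentially no obstacle here: the statement is a bookkeeping combination of \cref{lem:jeo20coro12,lem:ver17ex10389}, and the only points requiring even a moment's attention are the trivial containments $J_{n,s}, T_{n,s} \subseteq \mathbb{B}_{2}$ used to get $\rad(\mathcal{C}) \leq 1$ and the invariance of $\gmw$ under convex hulls.
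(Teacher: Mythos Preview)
Your proposal is correct and follows essentially the same approach as the paper: apply \cref{lem:jeo20coro12} with $T=\mathcal{C}$, use $\rad(\mathcal{C})\leq 1$ (the paper in fact notes $\rad(\mathcal{C})=1$), bound $\gmw(\mathcal{C})$ via \cref{lem:ver17ex10389}, and absorb constants. The paper only writes out the expectation case and remarks that the high-probability case is identical, which matches your argument.
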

\begin{proof}[{Proof of~\cref{lem:sparse-subgaussian-deviations}}]
  We demonstrate the proof for the expectation expression, since the
  probability bound follows by an identical set of
  steps. Combine~\cref{lem:jeo20coro12} and~\cref{lem:ver17ex10389} to obtain
  \begin{align*}
    \E \sup_{x \in \mathcal{C}} \left| \| Ax \| - \sqrt m \| x \| \right| \leq C_{1} \beta \sqrt{\log \beta} \left[ C_{2} \sqrt{s \log (e n/s)} + 1 \right],
  \end{align*}
  noting that $\rad \mathcal{C} = \sup_{x \in \mathcal{C}} \| x \| = 1$. Above,
  we write $C_{1}$ to denote the absolute constant
  from~\cite[Corollary~1.2]{Jeo_20} and $C_{2}$ the one
  from~\cite[Exercise~10.3.8]{Ver_17}. We obtain
  \begin{align*}
    C_{1} \beta \sqrt{\log \beta} \left[ C_{2} \sqrt{s \log (e n/s)} + 1 \right] %
    \leq C \beta \sqrt{\log \beta} \left[ \sqrt{s \log (e n/s)} + 1 \right],
  \end{align*}
  where $C := C_{1} \max \{C_{2}, 1\}$ is the absolute constant appearing in the
  result statement, completing the proof.
\end{proof}
\subsection{Bounding singular values of submatrices}
\label{sec:bounds-singvals}

Finally, we present a probabilistic bound on the singular values of certain
families of $s$-column submatrices, which in turn establishes a restricted
isometry result for the class of matrices considered in~\cref{sec:appli-lasso1}.

\begin{lemma}
  \label{lem:submat-least-singvals}
  Let $1 \leq s \leq m \leq n$ be integers and let $\delta \in (0, 1)$. Suppose
  that
  \begin{align*}
    m %
    \geq C \delta^{-2} \beta^{2}\log \beta \left[ s \log(en/s) %
    + \log(3 / \varepsilon) \right],
  \end{align*}
  where $C > 0$ is an absolute constant. If
  $A \in \reals^{m \times n}$ is a normalized $\beta$-subgaussian matrix then
  with probability at least $1 - \varepsilon$ on the realization of $A$ it holds
  that
  \begin{align*}
    1 - \delta \leq \min_{|K| \leq s} \sigma_{\min}(A_{K}) \leq \max_{|K| \leq s} \sigma_{\max}(A_{K}) \leq 1 + \delta. 
  \end{align*}
  In particular, $A$ satisfies RIP of order $s$ with parameters $(1-\delta, 1 + \delta)$. 
\end{lemma}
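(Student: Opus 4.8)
The plan is to derive the two–sided singular value bound from a uniform matrix deviation inequality over the sparse set, applied through~\cref{lem:sparse-subgaussian-deviations}, and then to read off the restricted isometry property directly from~\cref{def:vrip}. Write $A = m^{-1/2} B$, where $B \in \reals^{m\times n}$ is a $\beta$-subgaussian matrix, as in the definition of a normalized $\beta$-subgaussian matrix. For every $x \in \Sigma_s^n$ one has $\|Ax\| = m^{-1/2}\|Bx\|$, so controlling $\sup_{x \in \Sigma_s^n \cap \mathbb{B}_{2}}\bigl|\,\|Ax\| - \|x\|\,\bigr|$ is the same, up to the factor $m^{-1/2}$, as controlling $\sup_{x \in \Sigma_s^n \cap \mathbb{B}_{2}}\bigl|\,\|Bx\| - \sqrt m\,\|x\|\,\bigr|$. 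Since $\Sigma_s^n \cap \mathbb{B}_{2} \subseteq \conv(\Sigma_s^n \cap \mathbb{B}_{2}) = J_{n,s}$, this last supremum is at most $\sup_{x \in J_{n,s}}\bigl|\,\|Bx\| - \sqrt m\,\|x\|\,\bigr|$, to which~\cref{lem:sparse-subgaussian-deviations} applies with $\mathcal{C} = J_{n,s}$.

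Next I would take the deviation level $u := \sqrt{\log(3/\varepsilon)}$ in the high-probability estimate of~\cref{lem:sparse-subgaussian-deviations}, so that $3e^{-u^2} = \varepsilon$, obtaining, with probability at least $1-\varepsilon$ on the realization of $A$,
\[
  \sup_{x \in J_{n,s}} \bigl|\,\|Bx\| - \sqrt m\,\|x\|\,\bigr|
  \;\leq\; C\beta\sqrt{\log\beta}\,\Bigl(\sqrt{s\log(en/s)} + \sqrt{\log(3/\varepsilon)}\Bigr).
\]
Using $\sqrt a + \sqrt b \leq \sqrt{2(a+b)}$ and dividing through by $\sqrt m$, the right-hand side is at most $\delta$ as soon as $m \geq 2C^2\delta^{-2}\beta^2\log\beta\,[s\log(en/s)+\log(3/\varepsilon)]$; renaming the absolute constant ($C \leftarrow 2C^2$) turns this into precisely the hypothesis on $m$. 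Hence, on this event, $\bigl|\,\|Ax\| - \|x\|\,\bigr| \leq \delta$ for all $x \in \Sigma_s^n \cap \mathbb{B}_2$, and by positive homogeneity this extends to $(1-\delta)\|x\| \leq \|Ax\| \leq (1+\delta)\|x\|$ for every $x \in \Sigma_s^n$.

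Finally I would pass from sparse vectors to column submatrices. For any $K \subseteq \{1,\dots,n\}$ with $|K|\leq s$ and any $y \in \reals^{|K|}$, the zero-padded vector $L_K(y)$ lies in $\Sigma_s^n$, and $\|A\,L_K(y)\| = \|A_K y\|$ while $\|L_K(y)\| = \|y\|$. The inequality just established thus gives $(1-\delta)\|y\| \leq \|A_K y\| \leq (1+\delta)\|y\|$ for all $y$, i.e.\ $\sigma_{\min}(A_K) \geq 1-\delta$ and $\sigma_{\max}(A_K) \leq 1+\delta$; taking the minimum and the maximum over all such $K$ yields the claimed chain of inequalities, and the RIP statement of order $s$ with parameters $(1-\delta,1+\delta)$ follows at once from~\cref{def:vrip}.

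I do not anticipate a genuine obstacle in this argument; the points that need care are purely bookkeeping — the normalization factor $m^{-1/2}$, the choice of $u$ and the absorption of numerical factors into the absolute constant, and the observation that one needs only the inclusion $\Sigma_s^n \cap \mathbb{B}_{2} \subseteq J_{n,s}$ (not an equality) because only an upper bound on the supremum is required. If one prefers, the same computation goes through verbatim with $\mathcal{C} = T_{n,s}$ in place of $J_{n,s}$, since $\Sigma_s^n \cap \mathbb{B}_2 \subseteq T_{n,s}$ as well.
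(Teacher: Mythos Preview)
Your proposal is correct and follows essentially the same approach as the paper: apply \cref{lem:sparse-subgaussian-deviations} with $\mathcal{C}=J_{n,s}$ and $u=\sqrt{\log(3/\varepsilon)}$, use $\sqrt a+\sqrt b\le\sqrt{2(a+b)}$ together with the assumed lower bound on $m$ to make the deviation at most $\delta$, then extend by homogeneity and read off the singular value bounds for each $A_K$. The only cosmetic differences are that the paper first restricts to $J_{n,s}\cap\mathbb{S}^{n-1}$ before invoking homogeneity, whereas you pass through the inclusion $\Sigma_s^n\cap\mathbb{B}_2\subseteq J_{n,s}$ directly; both routes are equivalent.
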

\begin{proof}[{Proof of~\cref{lem:submat-least-singvals}}]
  Let $\tilde C > 0$ be the absolute constant
  from~\cref{lem:sparse-subgaussian-deviations} and set $C :=
  2\tilde{C}^{2}$. Recall that in~\cref{lem:sparse-subgaussian-deviations}, $\mathcal{C}$ could
  have been either $J_{n,s}$ or $T_{n,s}$. Choose $\mathcal{C} := J_{n,s}$. For $\delta$ as given in the result statement, if
  $m \geq 2\tilde{C}^{2} \delta^{-2} \beta^{2}\log \beta \left[ s \log(en/s) +
    \log(3 / \varepsilon) \right]$ and $A \in \reals^{m \times n}$ is a
  normalized $\beta$-subgaussian matrix (\ie there exists a $\beta$-subgaussian
  matrix $\tilde A \in \reals^{m \times n}$ with $A = m^{-1/2} \tilde A$), then,
  by~\cref{lem:sparse-subgaussian-deviations} with
  $u := \sqrt{\log(3/\varepsilon)}$, it holds with probability at least
  $1 - \varepsilon$ that
  \begin{align*}
    \sup_{x \in J_{n,s}} \left| \| Ax \| - \| x \| \right| %
    & \leq \frac{\tilde{C} \beta \sqrt{\log \beta} \left[ \sqrt{s \log (en/s)}  + \sqrt{\log(3/\varepsilon)} \right]}{\sqrt m} %
    \\
    & \leq \delta \frac{ \sqrt{s \log (en/s)}  + \sqrt{\log(3/\varepsilon)}}{\sqrt{2} \cdot \sqrt{s \log (en/s) + \log(3/\varepsilon)}} %
    \\
    & \leq \delta.
  \end{align*}
  The first inequality is a consequence of the referenced result; the second
  obtained by substituting for $m$ and simplifying. The last line follows by an
  application of Jensen's inequality (\ie{} for $a, b \geq 0$ it holds that
  $\sqrt 2 \cdot \sqrt{a + b} \geq \sqrt a + \sqrt b$).

  Restricting to $x \in J_{n,s}\cap\sphn$ thereby yields
  $\sup_{x \in J_{n,s} \cap \sphn} \left| \| Ax \| - 1 \right| \leq
  \delta$. In particular,
  \begin{align*}
    1 - \delta \leq \| Ax \| \leq 1 + \delta, \qquad\forall x\in J_{n,s} \cap \sphn.
  \end{align*}
  Using positive homogeneity of $\|\cdot\|$, it follows that
  \begin{align}
    \label{eq:restricted-sing-vals}
    (1 - \delta) \| x \| \leq \| Ax \| \leq (1 + \delta) \| x \|, \qquad\forall x \in J_{n,s}. 
  \end{align}
  It
  follows that, for any $K \subseteq \{1,\ldots,n\}$ with $|K| \leq s$,
  \begin{align*}
    (1 - \delta) \| x \| \leq \| A_{K}x \| \leq (1 + \delta) \| x \|, %
    \qquad\forall x \in \reals^{|K|}. 
  \end{align*}
  Specifically, \cref{eq:restricted-sing-vals} gives the desired result in view of the definition of $J_{n,s}$:
  \begin{align*}
    & \sup_{x \in J_{n,s}\cap \sphn} \left| \| Ax \| - 1 \right| \leq \delta
    \\
    & \implies\quad \sup_{|K| \leq s}\sup_{x \in \sphn[|K|]} \left| \| A_{K} x\| - 1 \right| \leq \delta \\
    & \implies\quad (1 - \delta) \leq \| A_{K}x \| \leq (1 + \delta), \qquad\forall K \subseteq \{1,\ldots,n\} \; \text{with} \; |K| \leq s, \quad \forall x \in \sphn[|K|]
    \\
    & \implies\quad 1 - \delta \leq \min_{|K| \leq s} \sigma_{\min}(A_{K}) \leq \max_{|K| \leq s} \sigma_{\max}(A_{K}) \leq 1 + \delta. 
  \end{align*}
  (Note the final implication is obtained from one characterization of the
  extremal singular values of a matrix --- see~\cite[(4.5)]{Ver_17}.)
  Clearly $A$ satisfies RIP of order $s$ with parameters $(1-\delta, 1+\delta)$ in
  view of~\cref{def:vrip}.
\end{proof}


\end{document}